\newcommand{\qbin}[2]{\genfrac{[}{]}{0pt}{}{#1}{#2}}
\renewcommand{\L}{\mathcal{L}}
\newcommand{\bb}{\mathbb}
\newcommand{\mrm}{\mathrm}
\newcommand{\mf}{\mathfrak}
\newcommand{\sym}{\mathrm{Sym}}
\newcommand{\alt}{\mathrm{Alt}}
\newcommand{\psl}{\mathrm{PSL}}
\newcommand{\pgl}{\mathrm{PGL}}
\newcommand{\Der}{\mathrm{Der}}
\newcommand{\der}{\mathrm{der}}
\newcommand{\Cay}{\mathrm{Cay}}
\newcommand{\fix}{\mathrm{fix}}
\newcommand{\Ome}{\Omega}
\newcommand{\ch}{\mathbf{v}}
\newcommand{\onevec}{ {\bf{1}} }
\newcommand{\one}{ {\mathbb{1}} }
\newcommand{\ind}{ \mathrm{ind}}
\newcommand{\res}{ \mathrm{res}}
\DeclareMathOperator{\PG}{PG}
\DeclareMathOperator{\AG}{AG}
\newtheorem{lemma}{Lemma}[section]
\newtheorem{prop}[lemma]{Proposition}
\newtheorem{cor}[lemma]{Corollary}
\newtheorem{thm}[lemma]{Theorem}
\newtheorem{example}[lemma]{Example}
\newtheorem{question}[lemma]{Question}
\newtheorem{notation}[lemma]{Notation}
\newtheorem{remark}[lemma]{Remark}
\theoremstyle{definition}
\newtheorem{mydef}[lemma]{Definition}
\DeclareRobustCommand{\shortto}{%
  \mathrel{\mathpalette\short@to\relax}%
}
\newcommand{\short@to}[2]{%
  \mkern2mu
  \clipbox{{.5\width} 0 0 0}{$\m@th#1\vphantom{+}{\shortrightarrow}$}%
  }
\begin{document}

\title{Cameron-Liebler sets in permutation groups} 

\author[1]{Jozefien D'haeseleer\ \thanks{Jozefien.Dhaeseleer@ugent.be}}
\affil{Department of Mathematics: Analysis, Logic and Discrete Mathematics, Ghent University, Belgium}
\author[2]{Karen Meagher\ \thanks{karen.meagher@uregina.ca}}
\author[2]{Venkata Raghu Tej Pantangi\ \thanks{pvrt1990@gmail.com}}
\affil[2]{Department of Mathematics and Statistics, University of Regina,	Regina, Saskatchewan S4S 0A2, Canada}

	


\maketitle

\begin{abstract}
Consider a group $G$ acting on a set $\Omega$, the vector $\ch_{a,b}$ is a vector with the entries indexed by the elements of $G$, and the $g$-entry is 1 if $g$ maps $a$ to $b$, and zero otherwise.
A \textsl{$(G,\Omega)$-Cameron-Liebler set} is a subset of $G$, whose indicator function is a linear combination of elements in $\{\ch_{a, b}\ :\ a, b  \in \Omega\}$. 
We investigate Cameron-Liebler sets in permutation groups, with a focus on constructions of Cameron-Liebler sets for 2-transitive groups.
\end{abstract}

\section{Introduction}
\subsection{Some history on Cameron-Liebler sets}
The investigation of Cameron-Liebler sets, or, in short, \textsl{CL sets}, of permutation groups is inspired by the research on Cameron-Liebler sets in finite projective spaces. 
In \cite{CL}, Cameron and Liebler introduced special line classes in $\PG(3,q)$ when investigating the orbits of the subgroups of the collineation group of $\PG(3,q)$. It is well known, by Block’s Lemma \cite[Section 1.6]{block}, that a collineation group PGL$(n+1,q)$ of a finite projective space $\PG(n, q)$ has at least as many orbits on lines as on points. Cameron and Liebler tried to determine which collineation groups have equally many point and line orbits. They found that the line orbits of the subgroups with equally many orbits on lines
and points, fulfill many (equivalent) combinatorial and algebraic properties. A set of lines, fulfilling one, and hence all of these properties, was later called a Cameron-Liebler set of lines in $\PG(3,q)$.

 More precisely, a set $\L$ of lines in $\PG(3, q)$, with characteristic vector $\ch_{\L}$, is a CL set of lines if and only if  $\ch_{\L}\in V_0\perp V_1$, where $V_0$ and $V_1$ are the first two eigenspaces of the related Grassmann scheme. Moreover, if $A$ is the point-line incidence matrix of $\PG(3,q)$, then it can be shown that a set $\L$ is a CL set of lines if and only if  $\ch_{\L}\in$ $\mathrm{im}(A^T)$. Note that a column of $A^T$ corresponds to a point $P$ in $\PG(3,q)$, and that this column is the characteristic vector of the set of all lines through $P$. This set of lines is often called a \textsl{point-pencil} or \textsl{canonical example} of a set of pairwise intersecting lines in $\PG(3,q)$. From this observation, it follows that CL sets can be seen as a linear combination of these canonical examples of intersecting families of lines. 

{We see that there is a strong link between intersecting families and CL sets. This connection will continue to hold true in the context of group theory, see Section \ref{SectionEKRCL}.}\\

The examination of Cameron-Liebler sets in projective spaces motivated the definition and investigation of Cameron-Liebler sets of generators in polar spaces~\cite{CLpol}, Cameron-Liebler classes in finite sets \cite{CLset} and Cameron-Liebler sets of $k$-spaces in $\PG(n,q)$ and in $\AG(n,q)$ \cite{ikCLpol,ikCLaffien}. Furthermore, Cameron-Liebler sets can be introduced for any distance-regular graph. This has been done in the past under various names: Boolean degree $1$ functions~\cite{Ferdinand.}, completely regular codes of strength $0$ and covering radius $1$~\cite{Mogilnykh}, tight sets of type I~\cite{MR2679936}. We refer to the introduction of~\cite{Ferdinand.} for an overview.

The main questions, independent of the context where Cameron-Liebler sets are investigated, are always the same: What are the possibilities for the size of a CL set, and what CL sets exist with a given size?
We will partially solve this question for Cameron-Liebler sets in the context of permutation groups.

\subsection{EKR and CL sets in permutation groups}\label{SectionEKRCL}

Throughout this paper we use $G$ to denote a group with a transitive action on a set $\Omega$. 
Given a subset $S \subset G$, we denote the \textsl{indicator function} of $S$ by $\ch_S$, this is the element in $\mathcal{C} [G]$ that is the sum of all the elements in $S$. This can also be viewed a vector, known as the characteristic vector---the entries of $\ch_S$ are indexed by the elements in $G$ and the entry corresponding to $g \in G$ is 1 if $g \in S$ and $0$ otherwise. 
Given $a,b \in \Omega$, we define 
\[
G_{a \shortto b}:=\{ g \in  G \ :\ g (a) = b \},
\]
these sets are called \textsl{the stars},
and we use $\ch_{a, b}$ for $\ch_{G_{a \shortto b}}$. 
We will use the notation $A(G,\Omega)$, or simply $A$, for the incidence matrix, in which the row are indexed by the elements of the group $G$, and the columns are the vectors $\ch_{a,b}$, $\forall a, b \in \Omega$.

Two elements $g, h \in G$ are \textsl{intersecting} if there is some $\alpha \in \Omega$ such that $g(\alpha) = h(\alpha)$. Further, a subset of $S \subset G$ is \textsl{intersecting} if any two elements in $S$ are intersecting. There are many recent results considering the size of the largest intersecting sets in different transitive permutation groups. In particular, the focus has been on finding groups in which a largest intersecting set has the same size as the stabilizer {$G_{\alpha \shortto \alpha}$} of a point $\alpha \in \Omega$---groups with this property are said to have the \textsl{Erd\H{o}s-Ko-Rado} property, or the EKR property.
Maximum intersecting sets in $2$-transitive groups have been well-studied.
For example, it has been shown~\cite{MSi2019, MST2016} that every 2-transitive group has the EKR property. 
For several classes of groups all the maximum intersecting sets have been characterized~\cite{ AM20152, AM2015, BMMKK2015, CK2003,  ellis2012setwise, ellis2011intersecting, MR4600198, MR4426440, LSP, LPSX2018, PSUEKR, MR4521814, MS2011,  Spiga2019, MR2419214}. In these groups, the cosets of a stabilizer of a point are intersecting sets of maximum size; these sets are called the \textsl{canonical intersecting sets}. In many groups, only the canonical intersecting sets are maximum intersecting sets, but in all 2-transitive groups the characteristic vector of an intersecting set of maximum size is a linear combination of characteristic vectors of the canonical intersecting sets~\cite{MSi2019}. This means, for any 2-transitive group, the characteristic vector of any intersecting set of maximal size is contained in $\mathrm{im}(A)$.  Similar to the projective case, we will define CL sets in groups as sets with a characteristic vector contained in $\mathrm{im}(A)$, and so, we will look for 01-vectors in this vector space. More precisely, we have the following definition. 

\begin{mydef} A \textsl{$(G,\Omega)$-Cameron-Liebler set}, or simply a CL set where the group and action are clear, is a subset of $G$, whose indicator function is a linear combination of elements in 
\[
\{\ch_{a, b}\ :\ a, b  \in \Omega\}.
\] 
\end{mydef}

For any pair $a, b \in \Omega$ the set $\ch_{a, b}$ is clearly a CL set, as is any disjoint union of sets of the form $G_{a \shortto b}$. The key question we consider here is if there are other CL sets.

\begin{mydef}
A CL set that is either a set $G_{\alpha \shortto \beta}$ or a disjoint union of sets of the form $G_{\alpha \shortto \beta}$ is called \textsl{canonical}. A \textsl{non-canonical} CL set is a CL set which is not canonical. A CL set which does not contain a set $G_{\alpha \shortto \beta}$ is called \textsl{proper non-canonical}. A \textsl{minimal} CL set, is a CL set with no CL set as a proper subset.
\end{mydef} 

\begin{prop}
If $C$ is a canonical CL set in a 2-transitive group $G$, then 
$v_C = \sum\limits_{a , b \in \Omega} c_{a, b} \ch_{G_{a \shortto b}}$
with either
$c_{a,b} = 0$ for all $a \neq a_0$, for some $a_0$; or $c_{a,b} = 0$ for all $b \neq b_0$, for some $b_0$.
\end{prop}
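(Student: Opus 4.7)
The plan is to unpack the definition of canonical CL set, write $C$ explicitly as a disjoint union of stars $G_{\alpha_i \shortto \beta_i}$, and then argue combinatorially that all of the pairs $(\alpha_i,\beta_i)$ appearing must share a common first coordinate or a common second coordinate. Since $C$ is canonical, we may write $C = G_{\alpha_1 \shortto \beta_1} \sqcup \cdots \sqcup G_{\alpha_k \shortto \beta_k}$ for distinct pairs $(\alpha_i,\beta_i) \in \Omega \times \Omega$, so that $v_C = \sum_{i=1}^k \ch_{G_{\alpha_i \shortto \beta_i}}$. The proposition's conclusion is then equivalent to saying either $\alpha_1 = \alpha_2 = \cdots = \alpha_k$ or $\beta_1 = \beta_2 = \cdots = \beta_k$. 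If $k=1$ the statement is trivial, so I would assume $k \ge 2$.

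The key auxiliary fact I would isolate is a \emph{disjointness criterion}: for two distinct pairs $(\alpha,\beta) \neq (\alpha',\beta')$ in $\Omega \times \Omega$, the stars $G_{\alpha \shortto \beta}$ and $G_{\alpha' \shortto \beta'}$ are disjoint if and only if either $\alpha = \alpha'$ (forcing $\beta \neq \beta'$) or $\beta = \beta'$ (forcing $\alpha \neq \alpha'$). One direction is immediate: if $\alpha = \alpha'$ then no single $g$ can send $\alpha$ to two different images, and the case $\beta = \beta'$ is symmetric. The other direction is where 2-transitivity enters: if $\alpha \neq \alpha'$ and $\beta \neq \beta'$, then by 2-transitivity of $G$ on $\Omega$ there exists $g \in G$ with $g(\alpha) = \beta$ and $g(\alpha') = \beta'$, so $g$ lies in both stars and they are not disjoint.

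With this criterion in hand, I would apply it first to the pair $(\alpha_1,\beta_1)$ and $(\alpha_2,\beta_2)$: since they index disjoint stars, without loss of generality $\alpha_1 = \alpha_2 =: a_0$ (the other case $\beta_1 = \beta_2$ is handled by the exact same argument with the roles of source and target swapped, yielding the $b_0$ conclusion). Now for any $i \ge 3$, $G_{\alpha_i \shortto \beta_i}$ is disjoint from both $G_{a_0 \shortto \beta_1}$ and $G_{a_0 \shortto \beta_2}$. Applying the disjointness criterion to each of these two pairings forces $\alpha_i = a_0$ or $\beta_i = \beta_1$, and separately $\alpha_i = a_0$ or $\beta_i = \beta_2$. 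Since $\beta_1 \neq \beta_2$ (the first two pairs are distinct and share the first coordinate), the only way both conditions can hold is $\alpha_i = a_0$. Hence $\alpha_i = a_0$ for every $i$, giving $c_{a,b} = 0$ whenever $a \neq a_0$.

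I do not expect a real obstacle here; the argument is essentially a case analysis driven by the disjointness criterion, and the only substantive input is 2-transitivity, which is precisely what powers the reverse direction of the criterion. The one point requiring mild care is the bookkeeping in the third paragraph, where one must rule out the mixed possibility $\beta_i = \beta_1$ and $\beta_i = \beta_2$ using the distinctness of $\beta_1$ and $\beta_2$; this is why the argument genuinely needs at least two stars of the same type to propagate the conclusion to the rest.
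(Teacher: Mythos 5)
Your proof is correct and follows essentially the same route as the paper: both rest on the observation that, by 2-transitivity, two distinct stars $G_{a_1 \shortto b_1}$ and $G_{a_2 \shortto b_2}$ are disjoint only if $a_1 = a_2$ or $b_1 = b_2$. The paper stops at that disjointness criterion and leaves the propagation implicit, whereas you carefully spell out why pairwise agreement in one coordinate forces global agreement (using that $\beta_1 \neq \beta_2$ rules out the mixed case); this is a genuine, if small, gap in the paper's write-up that your version fills.
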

\begin{proof}
Since $C$ is a canonical CL set, it must be the union of disjoint canonical intersecting sets.
As the group $G$ is 2-transitive, two sets  $G_{a_1 \shortto b_1}$ and $G_{a_2 \shortto b_2}$ are disjoint only if $a_1 = a_2$ or $b_1 =b_2$.
\end{proof}

The first major result on CL sets for permutations was given in 2011 by Ellis~\cite{ellis2011intersecting}.

\begin{thm}[{\cite[Corollary 2]{ellis2011intersecting}}]\label{sym}
Consider the natural action of $\sym(n)$ on $[n]$. Point stabilizers and their cosets are the only minimal $(\sym(n), [n])$-CL sets and canonical CL sets are the only CL sets.
\end{thm}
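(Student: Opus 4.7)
Since each set $G_{\alpha \shortto \beta}$ is itself a CL set, a canonical CL set (a disjoint union of such sets) is minimal if and only if it equals a single $G_{\alpha \shortto \beta}$, that is, a coset of the point stabilizer $G_{\alpha \shortto \alpha} = \mathrm{Stab}(\alpha)$. Hence the main task is to show that every $(\sym(n),[n])$-CL set is canonical. Given such a set $S$, I would expand $\ch_S = \sum_{a,b \in [n]} c_{a,b}\,\ch_{a,b}$ and evaluate at $g \in \sym(n)$ to get
\[
\ch_S(g) = \sum_{a \in [n]} c_{a, g(a)} = \langle C, P_g\rangle,
\]
where $C = (c_{a,b})$ and $P_g$ is the permutation matrix of $g$. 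The question becomes: which real matrices $C$ make $f_C(g) := \langle C, P_g\rangle$ take only values in $\{0,1\}$ on $\sym(n)$?

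The first step is to normalize $C$. The identities $\sum_b \ch_{a,b} = \onevec = \sum_a \ch_{a,b}$ show that adding any matrix whose entries are a row-constant plus a column-constant changes $f_C$ only by a global additive scalar, so I reduce to a representative with zero row sums and zero column sums, and write $f_C(g) = c_0 + \sum_a c_{a, g(a)}$ for an explicit constant $c_0$. For the rigidity step, I would compare $f_C$ on pairs of permutations differing by a transposition: for any $\tau = (i\,j)$ and any $g$,
\[
f_C(g) - f_C(g\tau) = c_{i, g(i)} + c_{j, g(j)} - c_{i, g(j)} - c_{j, g(i)},
\]
which must lie in $\{-1,0,1\}$ since $f_C$ is $\{0,1\}$-valued. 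As $g$ varies, $(g(i), g(j))$ takes every ordered pair of distinct values in $[n]^2$, so every additive $2\times 2$ minor of $C$ is uniformly bounded. Combining this with the stronger condition that $f_C(g), f_C(g\tau)$ individually lie in $\{0,1\}$, the aim is to show that all but one row, or all but one column, of the normalized $C$ is identically zero.

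Granted this rigidity conclusion, the proof finishes quickly: if only row $a_0$ of the normalized $C$ is nonzero, then $f_C(g) = c_0 + c_{a_0, g(a_0)}$ depends only on $g(a_0)$, so
\[
S = \bigsqcup_{b \in B} G_{a_0 \shortto b}, \qquad B = \{b \in [n] : c_0 + c_{a_0, b} = 1\},
\]
which is canonical; the symmetric case where only one column is non-zero gives the dual picture. Combined with the first paragraph, this gives both parts of the theorem.

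The \emph{main obstacle} is the rigidity/structural step: converting a pointwise Boolean condition on a degree-$1$ function on $\sym(n)$ into the rigid statement ``$C$ is supported on a single row or column''. A direct combinatorial argument would proceed by assuming two distinct rows and two distinct columns of $C$ are simultaneously active and then constructing a permutation $g$ realising a conflicting partial assignment to force $f_C(g) \notin \{0,1\}$. Alternatively, one can take a representation-theoretic route, as in \cite{ellis2011intersecting}: the span $\mathrm{span}\{\ch_{a,b}\}$ coincides with the isotypical sum of the trivial and standard representations in $\mathbb{C}[\sym(n)]$ under two-sided translation, and level-$1$ stability results for Boolean functions on $\sym(n)$ then force the canonical structure directly.
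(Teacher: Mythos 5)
The paper gives no proof of this theorem: it is imported verbatim from \cite[Corollary 2]{ellis2011intersecting}, so there is no internal argument to measure yours against. Judged on its own, your proposal has a genuine gap at exactly the point you flag as the ``main obstacle''. The parts you do carry out are correct but are the easy parts: minimality of single stars follows once the classification is known; the expansion $\ch_S(g)=\sum_a c_{a,g(a)}$ and the identity $f_C(g)-f_C(g\tau)=c_{i,g(i)}+c_{j,g(j)}-c_{i,g(j)}-c_{j,g(i)}$ are right, and with $2$-transitivity they do force every additive $2\times 2$ minor into $\{-1,0,1\}$; and the final paragraph correctly reads off $S=\bigsqcup_{b\in B}G_{a_0\shortto b}$ once the structural claim is granted. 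But the step from ``$f_C$ is Boolean and degree one'' to ``$C$ is supported, modulo row- and column-constants, on a single row or column'' \emph{is} the theorem --- it is precisely the statement that Boolean degree-one functions on $\sym(n)$ are dictatorships --- and you leave it at ``the aim is to show''. The minor bound alone is only a necessary condition, and neither of the two routes you name (constructing a conflicting permutation, or invoking level-one stability for Boolean functions on $\sym(n)$) is executed; the second in effect cites the result being proved. What you have is a correct reduction of the theorem to its hard core plus the trivial converse, not a proof.

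There is also an internal inconsistency in the way the rigidity target is stated. You normalize $C$ to have zero row sums \emph{and} zero column sums and then aim to show all but one row of this normalized $C$ vanishes. But a matrix with zero column sums whose support lies in a single row is the zero matrix, so under your normalization the only functions with that property are constants; concretely, the doubly centred representative of the single star $\ch_{a_0,b}$ is a rank-one matrix with no zero row or column at all. The claim should instead be that $C$ is congruent, modulo matrices of the form (row-constant)$+$(column-constant), to a $01$-matrix supported on one row or one column. This is easily repaired, but it is further evidence that the decisive step has not been thought through, and as written the proposal does not establish the theorem.
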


Our focus is on permutation group that have non-canonical CL sets. The first class of permutation groups we consider are the class of Frobenius groups. We found that these groups have many proper non-canonical CL sets, and in fact were able to characterize all the CL sets in these groups in Section~\ref{sec:Frobenius}.

We define the parameter $x$ of a CL set $L$ by 
\[
x=\frac{|L|}{|G_{a\shortto b}|}=\frac{|L||\Omega|}{|G|}.
\]  
As a preliminary result (see Lemma~\ref{twodirections} and Corollary~\ref{cortussen01}), we show that the parameter of every CL set in a $2$-transitive group is at least one. Moreover, we also show that every CL set of a $2$-transitive group with $x=1$ is necessarily a maximum intersecting set (Corollary~\ref{cortussen01}). In this regard, it is interesting to construct non-canonical CL sets of parameter greater than one in 2-transitive groups. By an affine type $2$-transitive group, we mean a $2$-transitive group whose socle is an elementary abelian group. Applying the construction described in Theorem~\ref{switch} (also in Theorem~\ref{switchchar}), in Section~\ref{sec:CLinAffine}, we show that all affine type $2$-transitive groups have a non-canonical CL set of parameter greater than one. 


It is well-known (in fact, it is originally from Burnside's 1897 book~\cite[Theorem IX]{burnside}) that a $2$-transitive group which is not affine, must be almost simple. So we turn our attention to some almost simple groups. Since the symmetric group does not have any non-canonical CL sets, and we will see that the alternating group also has no non-canonical CL sets (Theorem~\ref{thm:NoAlternating}), we focus on the group $\psl(2,q)$ (where $q$ is a power of an odd prime) with its $2$-transitive action on the points of the projective line. The main result of~\cite{LPSX2018} states that every maximum intersecting set in $\psl(2,q)$ is necessarily canonical. Thus, by Corollary~\ref{cortussen01}, every parameter one CL set is canonical, but we are able to construct CL sets with larger parameters. Theorem~\ref{switch}, in Section~\ref{sec:CLinPSL}, gives a construction of a non-canonical CL set of parameter $(q-1)/{2}$. 

We consider some groups that are not $2$-transitive, here the CL sets are not as well behaved. For instance, in Example~\ref{fractional}, we demonstrate the existence of a CL set of parameter strictly less than $1$, in a permutation group which is not $2$-transitive. By Corollary~\ref{cortussen01}, such a result is not possible in the case of $2$-transitive groups. In Section~\ref{sec:CLsubgroup}, we focus on CL sets that are also subgroups. We will show that, by arguments from representation theory, it is easy to check whether a subgroup is a CL subgroup. We also give some examples of non-canonical CL subgroups.

In Section~\ref{sec:CLfromnondisj} we discuss CL sets coming from the union of non-disjoint stars. For example, in Subsection~\ref{subsec:psl(2,q^2)}, we construct another non-canonical CL set in $\psl(2,q^2)$. 
Finally, we end the paper with some open questions and future research.

\section{Tools from Algebra}\label{sec:introrep}

In this section we state some results from representation theory of groups that will be useful to determine when a set is a CL set. 

Indicator functions of subsets of $G$ are elements of the group algebra $\bb{C}[G]$ and the linear span of the indicator functions  
\[
\{\ch_{a, b}\ :\ a, b  \in \Omega\}
\] 
is a $2$-sided ideal of $\bb{C}[G]$, denoted by $\mf{I}_{G}(\Omega)$. It is well-known that the simple ideals of $\bb{C}[G]$ are indexed by the irreducible characters of $G$. In fact, all simple two-sided ideals of $\bb{C}[G]$ are of the form $\left \langle e_{\phi} \right\rangle$, for some $\phi \in Irr(G)$. 
For any two-sided ideal $\mathcal{J}$, there is a subset $J \subset Irr(G)$ such that $\mathcal{J}= \sum\limits_{\phi \in J} \langle e_{\phi} \rangle$. We will derive such a decomposition of $\mf{I}_{G}(\Omega)$.

Given any $\phi \in Irr(G)$, the primitive central simple idempotent
\[
e_{\phi}=\dfrac{ \phi (1)}{|G|} \sum\limits_{g \in G} \phi(g^{-1})g,
\]
generates a simple two-sided ideal of $\bb{C}[G]$; this ideal is denoted by $\langle e_{\phi} \rangle $. 
This ideal is identified with a vector $G$-space module with dimension $|G|$, the projection to this module is given by
\[
E_{\phi}(g,h)=\dfrac{ \phi (1)}{|G|} \phi(hg^{-1}).
\]

We use ${\one_G}$ to denote the trivial representation of $G$, and drop the subscript if the group is clear from context, its projection to the corresponding $G$-module is
\[
E_{\one_G}=\dfrac{ 1 }{|G|} J,
\]
where $J$ is the all ones matrix. 

For a subset $S \subseteq G$ and a complex character $\phi$, define $\phi(S):= \sum_{s \in S} \phi(s)$. 
We will use ${\bf 0}$ for the zero matrix. We can also consider indicator functions as length-$|G|$ vectors, rather than an element of the group algebra. The indicator vector of a set will have entry 1 in positions corresponding to elements in the set, and 0 elsewhere.

\begin{lemma}\label{tech}
Let $C \subset G$ and $\phi$ be a complex character of $G$ such that $\phi(C t^{-1})=0$ for all $t \in C$. Then $\phi(Cg)=0$ for all $g \in G$.
\end{lemma}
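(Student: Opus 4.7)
The plan is to recast both the hypothesis and the conclusion as statements about a single matrix, using the fact that every complex representation of a finite group is equivalent to a unitary one. Choose a unitary representation $\rho$ with character $\phi$, and set
$$M := \sum_{c \in C} \rho(c).$$
For any $g \in G$, linearity of the trace gives
$$\phi(Cg) = \sum_{c \in C} \phi(cg) = \sum_{c \in C} \mathrm{tr}\bigl(\rho(c)\rho(g)\bigr) = \mathrm{tr}\bigl(M\rho(g)\bigr).$$
Thus the conclusion is equivalent to $\mathrm{tr}(M\rho(g)) = 0$ for every $g \in G$, and it plainly suffices to prove $M = \mathbf{0}$.

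The key step is to sum the hypothesis over $t \in C$ and exploit unitarity. For each $t \in C$ the hypothesis gives $\mathrm{tr}(M\rho(t^{-1})) = 0$. Summing over $t$ and using $\rho(t^{-1}) = \rho(t)^{*}$,
$$0 \;=\; \sum_{t \in C} \mathrm{tr}\bigl(M\rho(t^{-1})\bigr) \;=\; \mathrm{tr}\!\left(M \sum_{t \in C}\rho(t)^{*}\right) \;=\; \mathrm{tr}(MM^{*}) \;=\; \|M\|_{F}^{2}.$$
Since the Frobenius norm vanishes only on the zero matrix, $M = \mathbf{0}$, and so $\phi(Cg) = \mathrm{tr}(M\rho(g)) = 0$ for every $g \in G$.

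There is no genuine obstacle here; the only subtlety worth flagging is that $\phi$ need not be irreducible, so one should invoke the unitarizability of an arbitrary finite-dimensional representation of a finite group rather than working within a single minimal two-sided ideal $\langle e_{\phi}\rangle$. After that, the proof is a three-line trace manipulation and does not require any representation-theoretic decomposition of $\phi$.
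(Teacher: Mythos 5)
Your proposal is correct and is essentially the paper's own argument: both pass to a unitary representation affording $\phi$, observe that summing the hypothesis over $t\in C$ yields the vanishing of $\mathrm{tr}(MM^{*})$ (the paper phrases this as $\mathrm{tr}\bigl(\Phi(Cg)\Phi(Cg)^{T}\bigr)=0$, which equals $\mathrm{tr}(MM^{*})$ since $\rho(g)$ is unitary), and conclude that the matrix is zero. Your version is if anything slightly cleaner, since working with $M=\sum_{c\in C}\rho(c)$ makes the independence from $g$ explicit.
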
 
\begin{proof}
Let $n = \phi(1)$ be the degree of $\phi$, and $\Phi : G \to GL_{n}(\bb{C})$ be a unitary representation affording $\phi$. (The existence of such a representation follows from a proof of Maschke's theorem.) 
As $\Phi$ is a unitary representation, for any $g \in G$, $\Phi(C^{-1}g^{-1})=\Phi(Cg)^{T}$. 
So we have 
 \begin{align*}
 \Phi(Cg) \times \Phi (Cg)^{T} &= \sum \limits_{t \in C}  \Phi (Ct^{-1}).
 \end{align*}
By our assumptions, $\phi(Ct^{-1})=Tr( \Phi (Ct^{-1}) )=0$ for all $t \in C$. 
We conclude for all $g \in G$, that $Tr( \Phi(Cg) \times \Phi(Cg)^{T} )=0$. 
Thus $\Phi(Cg)={\bf 0} $ and $\phi(Cg)=0$ for all $g \in G$. 
\end{proof}

We now describe the decomposition of  $\mf{I}_{G}(\Omega)$ as a direct sum of simple two-sided ideals of $\bb{C}[G]$.

\begin{lemma}[{\cite[Lemma 12]{Li-Pantangi}}]\label{idekrm} 
For a finite group $G$, with a subgroup $H$, consider the action of $G$ on $\Omega=G/H$.
Define $Y_{\Omega}:=\{\phi \in Irr(G)\ :\ \sum\limits_{h\in H} \phi (h)\neq 0\}$, then 
 \[
 \mf{I}_{G}(\Omega)=\oplus_{\phi \in Y_\Ome} \left \langle e_{\phi} \right \rangle.
 \]
\end{lemma}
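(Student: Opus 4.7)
The plan is to reduce the statement to a computation involving the central idempotent $e_\phi$ acting on $\ch_H$, and then to finish with a projection-operator argument. First I would identify $\mf{I}_G(\Omega)$ with the two-sided ideal of $\bb{C}[G]$ generated by $\ch_H := \sum_{h \in H} h$. Indeed, setting the basepoint $a_0 = H$ and writing any $a = xH$, $b = yH$, a direct computation gives $G_{a \shortto b} = yHx^{-1}$, so $\ch_{a,b} = y \cdot \ch_H \cdot x^{-1}$ as elements of $\bb{C}[G]$. Taking spans over all pairs $(x,y) \in G \times G$ yields $\mf{I}_G(\Omega) = \bb{C}[G] \cdot \ch_H \cdot \bb{C}[G]$.

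Next, by Wedderburn's theorem $\bb{C}[G] = \bigoplus_{\phi \in Irr(G)} \langle e_\phi \rangle$, and since $\mf{I}_G(\Omega)$ is a two-sided ideal it decomposes as $\bigoplus_{\phi \in J} \langle e_\phi \rangle$ for some $J \subseteq Irr(G)$. Because each $\langle e_\phi \rangle$ is a \emph{simple} two-sided ideal, $\phi \in J$ exactly when $e_\phi \in \mf{I}_G(\Omega)$, and by centrality of $e_\phi$ together with the description of $\mf{I}_G(\Omega)$ as a principal two-sided ideal, this is equivalent to $e_\phi \cdot \ch_H \neq 0$. Hence it suffices to show that $e_\phi \cdot \ch_H \neq 0$ if and only if $\phi(H) \neq 0$.

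For this final step I would pick a matrix representation $\Phi$ affording $\phi$. Since $\Phi(e_\phi) = I_{\phi(1)}$ while $\Phi(e_\psi) = {\bf 0}$ for $\psi \neq \phi$, the element $e_\phi \cdot \ch_H$ vanishes in $\bb{C}[G]$ if and only if $\Phi(\ch_H) = {\bf 0}$. The key observation is that $T := \tfrac{1}{|H|}\Phi(\ch_H)$ is the averaging operator onto the $H$-fixed subspace of the $\phi$-module, and hence an idempotent. For a projection, vanishing is equivalent to vanishing trace, and $\mrm{Tr}(T) = \tfrac{1}{|H|}\phi(H)$; this closes the equivalence.

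The main obstacle I expect is the final biconditional. One direction is easy: reading off the coefficient of the identity in $e_\phi \cdot \ch_H$ gives $\phi(1)\phi(H)/|G|$, which is nonzero whenever $\phi(H) \neq 0$. The other direction needs the projection-idempotent argument above, or equivalently a Frobenius reciprocity calculation identifying $\phi(H)/|H|$ with the multiplicity of $\phi$ in $\mrm{Ind}_H^G \one_H$. Either approach is short once set up, and both deliver the characterization $J = Y_\Omega$ cleanly.
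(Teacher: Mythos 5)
Your proof is correct, and although it shares the paper's overall skeleton---decompose $\bb{C}[G]$ by Wedderburn and decide, for each $\phi \in Irr(G)$, whether $\langle e_{\phi}\rangle$ lies inside $\mf{I}_{G}(\Omega)$---the decisive step is carried out by a genuinely different device. The paper does not reduce to the single generator $\ch_{H}$: it keeps all the generators $\ch_{a,b}$ (right translates of conjugates of $H$) and handles the $\phi \notin Y_{\Omega}$ case by invoking Lemma~\ref{tech}, the unitary-representation argument that $\phi(Ct^{-1})=0$ for all $t \in C$ forces $\Phi(Cg)={\bf 0}$ for all $g$. You instead note that $G_{a\shortto b}=yHx^{-1}$ makes $\mf{I}_{G}(\Omega)=\bb{C}[G]\cdot \ch_{H}\cdot \bb{C}[G]$ a principal two-sided ideal, so by centrality and simplicity everything hinges on whether $e_{\phi}\ch_{H}$ vanishes, and you settle that by observing that $\tfrac{1}{|H|}\Phi(\ch_{H})$ is an idempotent (the averaging projector onto the $H$-fixed subspace), hence zero exactly when its trace $\phi(H)/|H|$ is zero. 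Your route is shorter and more self-contained for this particular lemma---it is the classical identification of $\langle \res_{H}\phi, \one_{H}\rangle$ with the rank of that projector---and it avoids Lemma~\ref{tech} entirely; the easy inclusion direction (the identity coefficient of $e_{\phi}\ch_{H}$ equals $\phi(1)\phi(H)/|G|$, then use minimality of $\langle e_{\phi}\rangle$) is essentially the same in both proofs. What the paper's route buys in exchange is generality: Lemma~\ref{tech} applies to arbitrary subsets $C$, not just subgroups, and the paper reuses it in Corollary~\ref{CLform} and Lemma~\ref{lem:degree4}, where no idempotent is available, so your shortcut could not replace it globally.
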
  

\begin{proof}
For any subset $S \subset G$ and $\phi \in Irr(G)$, we have
\begin{align*}
\frac{|G|}{\phi(1)}e_{\phi} \sum_{s \in S} s&= 
\frac{|G|}{\phi(1)}\sum\limits_{s \in S} e_{\phi}s  \notag
=\frac{|G|}{\phi(1)}\sum\limits_{s \in S} \frac{\phi(1)}{|G|} \sum\limits_{g \in G} \phi(g^{-1} )  g s  \notag
=\sum\limits_{s \in S}\sum\limits_{g \in G} \phi(sg^{-1})g \notag 
=\sum\limits_{g\in G} \phi( S g^{-1} )g.
\end{align*} 
Therefore for any $\phi \in Y_{\Omega}$, we have $e_{\phi} \left(\sum_{h \in H} h \right) \neq 0$
and $e_{\phi}  \sum_{h \in H} h  \in \left \langle e_{\phi}\right\rangle \cap \mf{I}_{G}(\Omega) \subset \mf{I}_{G}(\Omega)$. As $\left\langle e_{\phi}\right\rangle$ is a minimal ideal, we conclude that $\left \langle e_{\phi} \right \rangle \subset \mf{I}_{G}(\Omega)$.  

Next consider $\theta \in Irr(G) \setminus Y_{\Ome}$. In this case, since $\theta$ is a character, $\theta(H^g)=\theta(H)=0$ for any $g \in G$. By Lemma~\ref{tech}, we must have $\theta( H^{g}x )=0$ for all $x,g \in G$, and thus $e_{\theta}$ annihilates all elements of $Irr(G) \setminus Y_{\Omega}$. 
\end{proof}

As a corollary of the above, we obtain a character theoretic formulation 
\begin{cor}\label{CLform}
For $G$ be a finite group with $H \leq G$, consider the action of $G$ on $\Omega = G/H$ for some subgroup $H$. A subset $C$ is a $(G,\Omega)$-CL set if and only if  $\sum\limits_{x\in Ct^{-1}} \phi(x)=0$ for all $\phi \notin Y_{\Omega}$ and $t \in C$.
\end{cor}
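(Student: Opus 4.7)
The plan is to derive this directly from Lemma \ref{idekrm} together with Lemma \ref{tech}. By definition, $C$ is a $(G,\Omega)$-CL set if and only if $\ch_C \in \mf{I}_G(\Omega)$. Since $\bb{C}[G] = \oplus_{\phi \in Irr(G)} \langle e_\phi \rangle$ is an orthogonal decomposition into simple two-sided ideals, and Lemma \ref{idekrm} identifies $\mf{I}_G(\Omega)$ as the sum of those $\langle e_\phi \rangle$ with $\phi \in Y_\Omega$, membership in $\mf{I}_G(\Omega)$ is equivalent to the projection of $\ch_C$ onto $\langle e_\phi \rangle$ vanishing for every $\phi \notin Y_\Omega$. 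That projection is computed by left-multiplication by the central idempotent $e_\phi$.

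Next I would re-use the identity already established in the proof of Lemma \ref{idekrm}: for any subset $S \subset G$ and any $\phi \in Irr(G)$,
\[
\frac{|G|}{\phi(1)}\, e_\phi \sum_{s \in S} s \;=\; \sum_{g \in G} \phi(Sg^{-1})\, g.
\]
Applied with $S = C$, this shows $e_\phi \ch_C = 0$ if and only if $\phi(Cg^{-1}) = 0$ for every $g \in G$. Combining this with the first paragraph, $C$ is a CL set if and only if $\phi(Cg^{-1}) = 0$ for all $g \in G$ and all $\phi \notin Y_\Omega$.

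The remaining step is to reduce the quantifier "for all $g \in G$" to "for all $t \in C$", which is exactly the content of Lemma \ref{tech} (after the harmless substitution $g \mapsto g^{-1}$, which leaves the condition unchanged because $\phi$ is a class function is not actually needed here; we just replace the variable): one direction is trivial since $\{t^{-1} : t \in C\} \subseteq G$, and the nontrivial direction is Lemma \ref{tech} which promotes vanishing on the "diagonal" translates $Ct^{-1}$ with $t \in C$ to vanishing on all translates $Cg$, hence on all $Cg^{-1}$.

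I don't anticipate a genuine obstacle; the only mildly delicate point is being careful that the projection onto $\langle e_\phi \rangle$ inside $\bb{C}[G]$ is given by left-multiplication by $e_\phi$ (so that "$\ch_C$ lies in the sum of ideals indexed by $Y_\Omega$" translates cleanly into "$e_\phi \ch_C = 0$ for $\phi \notin Y_\Omega$"), and that Lemma \ref{tech} is applicable since each $\phi \notin Y_\Omega$ is in particular a complex character of $G$. Once these points are in place the corollary follows in a couple of lines.
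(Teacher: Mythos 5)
Your proposal is correct and follows essentially the same route as the paper: both use the identity $\frac{|G|}{\phi(1)}e_\phi\sum_{c\in C}c=\sum_{g\in G}\phi(Cg^{-1})g$ from the proof of Lemma~\ref{idekrm}, invoke Lemma~\ref{tech} to upgrade vanishing on the translates $Ct^{-1}$ ($t\in C$) to vanishing on all translates, and then use the orthogonal idempotent decomposition together with Lemma~\ref{idekrm} to conclude membership in $\mf{I}_G(\Omega)$. No gaps; the reverse direction is handled the same way in both arguments, by specializing $g$ to elements of $C$.
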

\begin{proof}
Assume that $C$ is a subset such that $\phi (Ct^{-1})=0$ for all $\phi \notin Y_{\Omega}$ and $t \in C$. Showing that $C$ is a CL set is equivalent to showing that $\sum_{c \in C} c \in \mf{I}_{G}(\Ome)$. 

First, observe that 
\begin{equation}\label{1}
\frac{|G|}{\phi(1)} e_{\phi}\sum_{c \in C} c=\sum\limits_{g\in G} \phi({C g^{-1}})g.
\end{equation}
This means for a $\phi \notin Y_{\Omega}$, by Lemma~\ref{tech}, $e_{\phi} \sum_{c \in C} c =0$.  Using Lemma~\ref{idekrm}, orthogonality of idempotents $e_{\phi}$, and the fact that $\sum_{\phi \in Irr(G)}e_{\phi}=1$, we deduce that if $e_{\phi}\sum_{c \in C} c=0$ for all $\phi \notin Y_{\Omega}$, then $\sum_{c \in C} c\in \mf{I}_{G}(\Omega)$. Thus $\sum_{c \in C} c \in \mf{I}_{G}(\Omega)$.
 
The other direction of the statement follows from \eqref{1}. 
\end{proof}

These lead to a nice formulation for subgroups which are CL sets.
\begin{cor}\label{cor:CLsubform}
Let $G$ be a finite group, $H$ a subgroup, and consider the action of $G$ on $\Omega:=G/H$. A subgroup $C$ is a $(G,\Omega)$-CL set if and only if  $\sum\limits_{x\in C} \phi(x)=0$ for all $\phi \notin Y_{\Omega}$.
\end{cor}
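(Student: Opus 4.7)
The plan is to deduce this as a direct specialization of Corollary~\ref{CLform}. By that corollary, a subset $C \subseteq G$ is a $(G,\Omega)$-CL set if and only if $\phi(Ct^{-1}) = 0$ for every $\phi \in \mathrm{Irr}(G) \setminus Y_\Omega$ and every $t \in C$. Thus I only need to translate the quantifier over $t \in C$ into a single statement about $C$ itself, using the extra hypothesis that $C$ is a subgroup.

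The key observation is that if $C$ is a subgroup of $G$ and $t \in C$, then $t^{-1} \in C$, and since $C$ is closed under multiplication, $Ct^{-1} = C$. Consequently $\phi(Ct^{-1}) = \phi(C) = \sum_{x \in C} \phi(x)$, and this value does not depend on $t$. Therefore the condition ``$\phi(Ct^{-1}) = 0$ for all $t \in C$'' in Corollary~\ref{CLform} collapses to the single condition $\sum_{x \in C}\phi(x) = 0$. Applying this in both directions (given a subgroup $C$, the CL condition is equivalent to $\phi(C) = 0$ for all $\phi \notin Y_\Omega$) yields exactly the statement of the corollary.

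There is no real obstacle here; the only thing to be slightly careful about is observing that the forward direction of Corollary~\ref{CLform} already implicitly covers the case $t = 1_G$ when $C$ is a subgroup (so that $\phi(Ct^{-1}) = \phi(C)$ appears directly), and that for the reverse direction one does not need the full strength of Lemma~\ref{tech} once $C$ is a subgroup: the single hypothesis $\phi(C) = 0$ suffices because $Ct^{-1} = C$ for every $t \in C$. So the proof amounts to a two-line reduction to the previous corollary.
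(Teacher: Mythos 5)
Your reduction is correct and is exactly the intended argument: the paper states this corollary without proof as an immediate consequence of Corollary~\ref{CLform}, and the observation that $Ct^{-1}=C$ for every $t$ in a subgroup $C$ is precisely why the quantifier over $t$ collapses. Nothing is missing.
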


In the case that the action of $G$ is 2-transitive on $\Omega$, the decomposition of $\mf{I}_{G}(\Omega)$ is very simple.
The set $Y_{\Omega}$ consists of only two representations, the trivial and the representation $\psi$ defined by $\psi(g) = \fix(g) - \one$. The representation $\one_G + \psi$ is equal to the representation induced on $G$ by $\one_{G_\alpha}$.
The character $ \one_G + \psi$ is known as the permutation character and the two-sided ideal corresponding to this character is the permutation module. In~\cite{AM20152}, it is shown that the canonical intersecting sets are a spanning set for the module.
If $G$ is 2-transitive, a set $C$ is a CL set if and only if 
\[
E_{\one_G + \psi}  (\ch_C) = \ch_C,
\]
or, equivalently, if $\ch_C$ is a linear combination of the indicator functions for the canonical intersecting sets.

A useful tool when considering intersecting sets in a group is the \textsl{derangement graph}. This is graph, denoted $\Gamma(G, \Omega)$, whose vertices are the elements of $G$ and two elements are adjacent if they are not intersecting, with the action of $G$ on $\Omega$. An intersecting set is a coclique, or independent set, in the derangement graph. The derangement graph is a normal Cayley graph \cite[Section 14.6]{GMbook}, specifically
\[
\Gamma( G, \Omega) = \Cay(G, \Der(G))
\]
where $\Der(G)$ is the set of derangements in $G$ (these are the elements with no fixed points). Vertices $g, h$ are adjacent if and only if $gh^{-1}\in \Der(G)$, and the degree of every vertex is the number of derangements in $G$,  which is denoted by $\der(G)$. For a fixed $g \in G$, let $N(g)$ be the set of all $h \in G$ for which $gh^{-1}$ is a derangement---this is the neighbourhood of $g$ in the derangement graph of $G$.

The next result shows how the cliques in the derangement graph can be used to get information about the CL sets; this result is not new, it can also be found in~\cite[Theorem 2.8.4]{PalmarinThesis} where is it stated only for 2-transitive groups.

\begin{lemma}\label{n-clique}
Let $G$ be a permutation group acting on a set $\Omega$, and assume the derangement graph $\Gamma(G, \Omega)$
contains a clique of size $|\Omega|$. Given a $(G,\Omega)$-CL set $L$, there is a natural number $n_{L}$ such that $|L \cap C|=n_{L}$, for any clique $C$ of size $|\Omega|$.    
\end{lemma}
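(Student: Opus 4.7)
The plan is to observe that a clique of size $|\Omega|$ in the derangement graph meets every star $G_{a\shortto b}$ in exactly one element, and then use the definition of a CL set to conclude that the intersection size with $L$ depends only on the coefficients in the linear combination expressing $\ch_L$, not on the clique itself.

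First I would analyze the structure of a clique $C = \{g_1, \dots, g_{|\Omega|}\}$ of size $|\Omega|$ in $\Gamma(G,\Omega)$. By definition of the derangement graph, any two elements of $C$ are non-intersecting, meaning for every $\alpha \in \Omega$ the values $g_1(\alpha), \dots, g_{|\Omega|}(\alpha)$ are pairwise distinct. Since there are exactly $|\Omega|$ such values lying in $\Omega$, the map $g \mapsto g(\alpha)$ is a bijection from $C$ to $\Omega$. Equivalently, for every pair $(a,b) \in \Omega \times \Omega$ there is a unique $g \in C$ with $g(a)=b$, so
\[
|C \cap G_{a \shortto b}| \;=\; 1 \quad \text{for every } a,b \in \Omega.
\]

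Next I would use the CL-set hypothesis to write
\[
\ch_L \;=\; \sum_{a,b \in \Omega} c_{a,b}\, \ch_{a,b}
\]
for some complex coefficients $c_{a,b}$. Taking the standard inner product of $\ch_L$ with $\ch_C$ (viewed as $\{0,1\}$-vectors indexed by $G$), we obtain
\[
|L \cap C| \;=\; \langle \ch_L, \ch_C \rangle \;=\; \sum_{a,b \in \Omega} c_{a,b}\, |G_{a \shortto b} \cap C| \;=\; \sum_{a,b \in \Omega} c_{a,b}.
\]
Since the right-hand side depends only on $L$ and not on the particular clique $C$, we may set $n_L := \sum_{a,b} c_{a,b}$, completing the argument.

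The proof is essentially a one-line calculation once the key structural fact about $|\Omega|$-cliques is extracted, so there is no real obstacle; the only subtle point is justifying that any $|\Omega|$-clique hits every star exactly once, which follows from a pigeonhole argument on the bijection $g \mapsto g(\alpha)$. One minor technicality is that the coefficients $c_{a,b}$ need not be unique (the stars $\ch_{a,b}$ are generally linearly dependent), but this causes no issue because the value of $\langle \ch_L, \ch_C\rangle$ is computed directly and is obviously independent of which expansion of $\ch_L$ is chosen.
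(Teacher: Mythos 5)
Your proposal is correct and follows essentially the same route as the paper: both arguments rest on the observation that an $|\Omega|$-clique meets every star $G_{a\shortto b}$ exactly once (equivalently, $A^{T}\ch_{C}$ is the all-ones vector) and then pair $\ch_{C}$ against the expansion $\ch_{L}=Av$. The paper additionally normalizes to identify $n_{L}=\frac{|\Omega|}{|G|}|L|$ explicitly, but your computation of $\langle \ch_{L},\ch_{C}\rangle=\sum_{a,b}c_{a,b}$ is the same calculation and your remark about non-uniqueness of the coefficients is handled correctly.
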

\begin{proof}
Let $\ch_C$ be the characteristic vector of the clique $C$ with $|C|=|\Omega|$ and let $\onevec$ be the all one vector. Then $A^T \cdot \ch_C = \onevec$ and $A^T \cdot \onevec = \frac{|G|}{|\Omega|}\onevec $. Hence, $A^T \left(\ch_C-\frac{|\Omega|}{|G|} \onevec \right) = 0$. Now, if $L$ is a CL set, then, by definition, $\ch_L = A v$ for some vector $v$. Hence, 
\begin{align*}
    |L\cap C| -\frac{|\Omega|}{|G|}|L| = \ch_L^T\ch_C - \ch_L^T\frac{|\Omega|}{|G|} \onevec = \ch_L^T(\ch_C-\frac{|\Omega|}{|G|} \onevec )=v^T A^T (\ch_C-\frac{|\Omega|}{|G|} \onevec ) = 0.
\end{align*} This implies that $|L\cap C| =\frac{|\Omega|}{|G|}|L|$ for every clique $C$ of size $|\Omega|$.
\end{proof}

\begin{cor}\label{n-cliquesize}
Let $G$ be a permutation group acting on a set $\Omega$, such that the derangement graph $\Gamma (G, \Omega )$ contains a clique the same size of $\Omega$. Then the size of a $(G, \Omega)$-CL set is divisible by $\frac{|G|}{|\Omega|}$.
\end{cor}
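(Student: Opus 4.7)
The plan is to extract the divisibility statement directly from the intersection equality produced in Lemma \ref{n-clique}. By hypothesis there exists a clique $C$ in $\Gamma(G,\Omega)$ with $|C| = |\Omega|$, so Lemma \ref{n-clique} applies and tells us that for any $(G,\Omega)$-CL set $L$ we have $|L \cap C| = n_L$ for some natural number $n_L$. The key observation is that the proof of that lemma actually identifies $n_L$ explicitly: the computation $|L \cap C| - \frac{|\Omega|}{|G|}|L| = 0$ gives
\[
n_L \;=\; \frac{|\Omega|}{|G|}\,|L|.
\]

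From here the corollary is immediate. Since $|L \cap C|$ is a non-negative integer, the expression $\tfrac{|\Omega|}{|G|}|L|$ is an integer. Because $G$ acts transitively on $\Omega$, we have $|G| = |\Omega|\cdot |G_{\alpha\shortto\alpha}|$ for any $\alpha \in \Omega$, so $\frac{|G|}{|\Omega|}$ is an integer (namely, the size of a point stabilizer). Rewriting the integer $n_L$ as $|L|/\!\left(\tfrac{|G|}{|\Omega|}\right)$ therefore shows that $\tfrac{|G|}{|\Omega|}$ divides $|L|$, which is exactly the conclusion.

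There is essentially no obstacle in this argument; the entire content has already been absorbed into Lemma \ref{n-clique}, and the corollary is just the observation that the rational number $n_L$ computed there is forced to be an integer by its combinatorial interpretation as the size of an intersection. The only thing to be careful about in the write-up is to make the step $n_L = \frac{|\Omega|}{|G|}|L|$ explicit, since the statement of Lemma \ref{n-clique} only asserts the existence of such an $n_L$ without naming it; a single sentence referring back to the displayed equation in that proof suffices.
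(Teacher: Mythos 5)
Your proposal is correct and is exactly the argument the paper intends: the corollary is stated as an immediate consequence of Lemma~\ref{n-clique}, whose proof shows $|L\cap C|=\frac{|\Omega|}{|G|}|L|$, and the integrality of the left-hand side together with $\frac{|G|}{|\Omega|}=|G_{\alpha\shortto\alpha}|\in\mathbb{Z}$ gives the divisibility. Nothing further is needed.
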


\section{CL sets in Frobenius groups}\label{sec:Frobenius}

A Frobenius group is a finite permutation group in which no non-trivial element fixes more than one point. Let $G \leq \sym(\Omega)$ be a Frobenius group and $H:=G_{\omega}$ be the stabilizer of $\omega \in \Omega$. By a celebrated result by Frobenius (see~\cite{isaacs1994character} or any standard book on character theory), we know that
\[
K:= \left( G\setminus \bigcup \limits_{g \in G} gHg^{-1} \right) \cup \{e\}
\]
is a normal subgroup which is regular with respect to the action of $G$ on $\Omega$. The group $K$ is called the \textsl{Frobenius kernel} and the group $H$ is called the \textsl{Frobenius complement}. 

Applying the results in the previous section, we will find all $(G, \Omega)$-CL sets where $G$ is a Frobenius group.
First, we demonstrate some non-canonical CL sets in $G$. These are inspired by the structure of the derangement graph $\Gamma_{G}$ of the Frobenius groups $G$. By \cite[Theorem 3.6]{AM2015}, the cosets of $K$, partition $\Gamma(G, \Omega)$ into a disjoint union of cliques {of size $|\Omega|$}. In the following lemma, we construct minimal CL sets by taking exactly one vertex from each clique in this partition.

\begin{lemma}\label{minfrb}
Let $G$ be a Frobenius group with $K$ the Frobenius kernel and $H$ the Frobenius complement.
Let $f: H \to K$ be a function. Then the set $S_{f}:=\{f(h)h \ :\ h \in H\}$ is a $(G,\ \Omega)$-CL set.
\end{lemma}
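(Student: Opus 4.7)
The plan is to apply Corollary~\ref{CLform} after first pinning down precisely which irreducible characters of $G$ lie outside $Y_\Omega$, exploiting the classical character theory of Frobenius groups. First, since every element of $G=KH$ has a unique expression as $kh$ with $k\in K$, $h\in H$, the elements $f(h)h$ are pairwise distinct, so $|S_f|=|H|=|G_{\alpha\shortto\beta}|$.

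Next, I would describe $\operatorname{Irr}(G)$ in the standard way for a Frobenius group: every irreducible character of $G$ is either (i) the inflation of an irreducible character of $G/K\cong H$, i.e.\ a character that is constant on cosets of $K$, or (ii) induced from a nontrivial irreducible character of $K$. For type (ii), using the Frobenius induction formula together with the fact that the $G$-conjugates of $H$ intersect $K$ only in $\{e\}$, one sees that $\phi$ vanishes on $H\setminus\{e\}$ while $\phi(e)=|H|\psi(e)\neq 0$, so $\sum_{h\in H}\phi(h)\neq 0$ and $\phi\in Y_\Omega$. For type (i), lifted from a character $\tilde\phi$ of $H$, we have $\sum_{h\in H}\phi(h)=\sum_{h\in H}\tilde\phi(h)=|H|\langle\tilde\phi,\mathbb{1}_H\rangle$, which is zero exactly when $\tilde\phi$ is non-trivial. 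Thus $\operatorname{Irr}(G)\setminus Y_\Omega$ is exactly the set of non-trivial characters of $G$ that are trivial on $K$.

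Then, by Corollary~\ref{CLform}, it suffices to show that for every such $\phi$ and every $t=f(h_0)h_0\in S_f$,
\[
\phi(S_f t^{-1})=\sum_{h\in H}\phi\bigl(f(h)\,h\,h_0^{-1}\,f(h_0)^{-1}\bigr)=0.
\]
Because $K$ is normal, I can write $h h_0^{-1}f(h_0)^{-1}=k'\cdot(hh_0^{-1})$ for some $k'\in K$, so each summand is of the form $(f(h)k')\cdot(hh_0^{-1})$ with $f(h)k'\in K$ and $hh_0^{-1}\in H$. Since $\phi$ is trivial on $K$, $\phi$ factors through $G/K\cong H$, and in particular $\phi((f(h)k')(hh_0^{-1}))=\tilde\phi(hh_0^{-1})$. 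Summing over $h\in H$ and reindexing by $h'=hh_0^{-1}$, the sum becomes $\sum_{h'\in H}\tilde\phi(h')=|H|\langle\tilde\phi,\mathbb{1}_H\rangle=0$ since $\tilde\phi$ is non-trivial on $H$.

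The only genuinely non-routine step is identifying $\operatorname{Irr}(G)\setminus Y_\Omega$; once this is in hand, the rest is the short manipulation above using normality of $K$ and triviality of $\phi$ on $K$. I expect the identification step to be the main obstacle, but it follows cleanly from the standard description of the character table of a Frobenius group.
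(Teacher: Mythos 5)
Your proof is correct. It follows the same overall strategy as the paper---verifying the criterion of Corollary~\ref{CLform}---but the mechanism for making the character sum vanish is genuinely different. The paper never identifies $\mathrm{Irr}(G)\setminus Y_{\Omega}$: it shows, for an \emph{arbitrary} character $\phi$, that $\phi(S_{f}t^{-1})=\phi(H)$, by rewriting $S_{f}t^{-1}$ as another set of the same shape $S_{f_{t}}$ and invoking the Frobenius-group fact that every element of a coset $Ky$ with $y\in H\setminus\{e\}$ is conjugate to $y$; the vanishing for $\phi\notin Y_{\Omega}$ is then immediate from the definition of $Y_{\Omega}$. You instead first pin down $\mathrm{Irr}(G)\setminus Y_{\Omega}$ as exactly the non-trivial characters inflated from $G/K\cong H$, using the standard classification of irreducible characters of a Frobenius group, after which the vanishing is an elementary computation needing only normality of $K$ and the fact that $\phi$ factors through $G/K$. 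Both key inputs (the coset-conjugacy property, which is Problem 7.1 of Isaacs cited in the paper, and the character classification) are standard consequences of Frobenius's theorem, so neither route is deeper than the other. Your route costs a little more up front but yields strictly more information, namely the explicit decomposition of $\mathfrak{I}_{G}(\Omega)$ as the sum of $\langle e_{\phi}\rangle$ over the trivial character and the characters induced from $K$; the paper's argument is shorter and, pleasingly, uniform over all characters. One cosmetic point: to see that a character induced from a non-trivial $\psi\in\mathrm{Irr}(K)$ vanishes on $H\setminus\{e\}$, the cleanest justification is that characters induced from a \emph{normal} subgroup vanish off that subgroup, combined with $H\cap K=\{e\}$; the fact you cite about $G$-conjugates of $H$ meeting $K$ trivially is true but slightly off-target.
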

\begin{proof}
To show that this is a CL set, we will use Corollary~\ref{CLform}, so we need to compute $\phi(S_{f}t^{-1})$, for all $\phi\in Irr(G)$ and $t\in S_{f}$.

Fix $h_{0} \in H$ and consider $t:=f(h_{0})h_{0} \in S_f$. As $K$ is normal, $y f(h)y^{-1} \in K$, for all $y,h \in H$. Define 
\[
f_{t}(y):= f(yh_{0}) \ y f(h_{0})^{-1}y^{-1},
\] 
for all $y\in H$. Now we observe that $S_{f}t^{-1}=S_{f_{t}}$. We have $f_{t}(e)e=1$ and for $y\neq e$, we have 
$f_{t}(y)y \in Ky$. By a well-known (see Problem 7.1 of \cite{isaacs1994character}) property of Frobenius group, given $h \in G \setminus K$, all the elements of the coset $Kh$ are conjugate to $h$. Therefore, we have 

$$\sum\limits_{y\in H} \phi(f_{t}(y)y)=\sum\limits_{y\in H} \phi(y)=\phi(H).$$

Now the result follows from Corollary~\ref{CLform}.

\end{proof}

We are now ready to give a characterization of the CL sets in a Frobenius group, to do this, we use Lemma~\ref{n-clique}. 

\begin{thm}\label{thm:FrbCLsets}
Let $G \leq \sym(\Omega)$ be a Frobenius group, let $K \triangleleft G$ be its Frobenius kernel and $H$ its Frobenius complement. If $L$ is a minimal $(G,\Omega)$-CL set, then there is a function $f: H \to K$ such that $L= \{f(h)h\ : \ h \in H\}$.
\end{thm}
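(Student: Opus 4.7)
The plan is to combine the structural result from Lemma~\ref{n-clique} (which constrains how a CL set intersects the cliques of size $|\Omega|$ in the derangement graph) with the existence result from Lemma~\ref{minfrb} (which produces CL sets of the form $\{f(h)h : h \in H\}$), then use minimality of $L$ to force equality.

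First I would recall that since $K$ is regular on $\Omega$, every non-identity element of $K$ is a derangement, so each coset $Kh$ is a clique in $\Gamma(G,\Omega)$ of size $|K|=|\Omega|$. As $G=KH$ with $K\cap H=\{e\}$, the cosets $\{Kh : h \in H\}$ partition $G$ into $|H|$ such cliques. Applying Lemma~\ref{n-clique} to $L$, there is a constant $n_{L}$ such that $|L\cap Kh|=n_{L}$ for every $h\in H$. Since $L$ is minimal, $L$ is non-empty, so $n_L\ge 1$.

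Next, for each $h\in H$ I would pick some element of $L\cap Kh$, and write it uniquely in the form $f(h)h$ with $f(h)\in K$; this defines a function $f:H\to K$. The resulting set $S_{f}=\{f(h)h : h\in H\}$ is automatically contained in $L$, and it has exactly $|H|$ elements because the $|H|$ cosets $Kh$ are disjoint. By Lemma~\ref{minfrb}, $S_{f}$ is itself a $(G,\Omega)$-CL set. Minimality of $L$ then forces $L=S_{f}$, which is precisely the statement we want.

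There is no real obstacle here: the heavy lifting has already been done in Lemma~\ref{minfrb} (showing the CL property of such sets) and in the clique-partition observation used in Lemma~\ref{n-clique} (which ensures $L$ meets every $Kh$). The only thing to double-check is that distinct $h,h'\in H$ give distinct elements $f(h)h\ne f(h')h'$, which follows immediately from $K\cap H=\{e\}$: an equality $f(h)h=f(h')h'$ would give $f(h')^{-1}f(h)=h'h^{-1}\in K\cap H$, forcing $h=h'$. As a byproduct of the argument, one sees that every minimal CL set in a Frobenius group has size exactly $|H|$, and that $n_{L}=1$ in Lemma~\ref{n-clique} for such an $L$.
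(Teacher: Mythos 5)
Your proposal is correct and follows essentially the same route as the paper: partition $G$ into the cliques $Kh$, apply Lemma~\ref{n-clique} to get constant intersection, extract a transversal set $S_f\subseteq L$ which is a CL set by Lemma~\ref{minfrb}, and invoke minimality. If anything, your argument is slightly cleaner than the paper's, which simply asserts $|L\cap K|=1$ from minimality rather than deriving it from the containment $S_f\subseteq L$ as you do.
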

\begin{proof} Let $L$ be any $(G,\Omega)$-CL set. Observing that any coset of $K$ is a clique of size $|\Omega|$, by Lemma~\ref{n-clique}, we have $|L\cap Kh|=|L \cap K|$ for all $h \in H$. Set $k:=|L \cap K|$, since $L$ is minimal $k=1$. As $G= \cup_{h \in H} Kh$, for each $h$ we set by $L \cap Kh = \{k_h h  \}$, then we can define $k$ functions $\left(f : H \to K\right)$ by $f(h) = k_h$. With this definition it is clear that $L=\cup S_f$. By Lemma~\ref{minfrb}, we know $S_f$ is a CL sets. Since the union of any disjoint minimal CL sets is also a CL set, this proves the theorem. 
\end{proof}

\section{CL sets of 2-transitive groups}
\label{sec:2transenswitch}

In this section, we prove that a $(G,\Omega)$-CL set, where the action of $G$ on $\Omega$ is 2-transitive, corresponds to an equitable partition of the derangement graph $\Gamma(G, \Omega)$ with two parts (these are also known as \textsl{intriguing sets}~\cite{MR2679936}). As a corollary, we find that the size of a non-empty CL set in $G$ is at least the size of a canonical example $G_{a \shortto b}$. 

Recall that $A(G, \Omega)$ is the incidence matrix for the action---the rows are indexed by the group elements of $G$, and the columns are indexed by the pairs $(a,b)\in \Omega \times \Omega$; then $A_{g, (a, b)} = 1$ if $g(a) = b$ and 0 otherwise. We use $A$ where the group and action are clear. Each column of $A(G, \Omega)$ is the characteristic vector of a canonical intersecting set. A set $L$ is a CL set if and only if $\ch_L \in \mathrm{im}(A) = \ker(A^T)^\perp$, (where $\perp$ is taken with respect to the standard inner product on columns.) The \textsl{incidence vector} for an element $g \in G$ is the row corresponding to $g$ in $A(G, \Omega)$---this is a 01-vector with the indexed by $\Omega \times \Omega$ with the $(a,b)$-entry equal to 1 if and only if $g (a) = b$. This vector is denoted by $v_g$. We use $N(g)$ to denote the neighbourhood of $g \in G$ in the the derangement graph of $G$; equivalently, this is the set
\[
N(g) = \{ h \in G \ : \ hg^{-1} \textrm{ is a derangement}  \}.
\]

\begin{lemma}\label{lemmaaantaldisjunct} 
Let $G$ be a group with a $2$-transitive action on the set $\Omega$, with $|\Omega| = n$, and $A$ be its incidence matrix.
Then for any $g \in G$ and $a, b \in \Omega$,
\begin{align*}
\ch_{N(g)} -\frac{\der(G)}{n-1}\left(\frac{n}{| G |}  \onevec  -\ch_g \right) \in \ker(A^T).
\end{align*}
\end{lemma}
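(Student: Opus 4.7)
The plan is to show membership in $\ker(A^T)$ by computing $A^T$ applied to the vector and verifying that each coordinate vanishes. Recall that the rows of $A^T$ are indexed by pairs $(a,b) \in \Omega \times \Omega$ and, for any $v \in \bb{C}[G]$, the $(a,b)$-entry of $A^T v$ equals $\langle v, \ch_{a,b}\rangle$. So for a subset $S \subseteq G$, the $(a,b)$-entry of $A^T \ch_S$ is just $|S \cap G_{a\shortto b}|$. Since the action is transitive, $|G_{a\shortto b}| = |G|/n$, so the $(a,b)$-entry of $A^T \onevec$ is $|G|/n$, and the $(a,b)$-entry of $A^T \ch_g$ is $1$ if $g(a)=b$ and $0$ otherwise.

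With these reductions, the lemma becomes the pointwise identity
\[
|N(g) \cap G_{a \shortto b}| \;=\; \frac{\der(G)}{n-1}\bigl(1 - [g(a)=b]\bigr), \qquad \text{for all } a,b \in \Omega.
\]
I would then split into two cases. If $g(a)=b$, then every $h \in G_{a \shortto b}$ satisfies $hg^{-1}(b)=b$, so $hg^{-1}$ is not a derangement, forcing the left side to be $0$; this matches the right side. If $g(a) \neq b$, substitute $k := hg^{-1}$, so the count becomes $|\{k \in \Der(G) : k(g(a)) = b\}|$. Setting $c := g(a)$, I need the number of derangements sending a fixed point $c$ to a different point $b$.

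The key structural input is that $G$ is $2$-transitive, so the stabilizer $G_c$ acts transitively on $\Omega \setminus \{c\}$, and hence the count $|\{k \in \Der(G) : k(c) = b\}|$ depends only on the condition $c \neq b$, not on the particular pair; call this common value $d$. Summing over the $n-1$ choices of $b \neq c$ gives $\der(G) = (n-1)d$, so $d = \der(G)/(n-1)$, which is exactly the desired value. Combining the two cases finishes the verification.

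The calculation is essentially bookkeeping once $A^T$ is unpacked; the only place where something non-trivial is used is the uniform counting of derangements $k$ with prescribed image $k(c)=b$, which is where $2$-transitivity is essential. If one only had transitivity, this count could depend on the orbit of $(c,b)$ under $G$, and the lemma in its clean form would fail; so the potential obstacle is entirely localized to invoking $2$-transitivity correctly at this single step.
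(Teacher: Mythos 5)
Your proposal is correct and follows essentially the same route as the paper: unpack $A^T$ entrywise as $|S\cap G_{a\shortto b}|$, split on whether $g(a)=b$, and use $2$-transitivity to show the number of derangements sending a fixed $c$ to a fixed $b\neq c$ is the uniform value $\der(G)/(n-1)$. The only difference is cosmetic — you make the uniform-counting step explicit via the orbit/summation argument where the paper simply asserts it.
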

\begin{proof}
First note $A^T \onevec=  \frac{|G|}{n} \onevec $, and if $v_g$ is the incidence vector of the one element set $\{g\} \subset G$, then $A^T \ch_g = v_g$. 

The $(a, b)$-entry of $A^T \ch_{N(g)}$ is the number of $h \in G$, adjacent in $g$ in the derangement graph, such that $h(a) = b$. If $g(a)=b$, then there are no such elements adjacent to $g$. Provided that $g(a) \neq b$, since $G$ is $2$-transitive, there are $\frac{\der(G)}{n-1}$ elements $h\in G$ with $h(a)=b$ and $gh^{-1}\in \Der(G)$. This implies that
\[
A^T \ch_{N(g)} = \frac{\der(G)}{n-1}( \onevec - v_g)
= \frac{\der(G)}{n-1} \left( \frac{n}{|G| } 
   A^T \onevec - A^T \ch_g \right).
\] 
This is equivalent to
\[
 \ch_{N(g)} - \frac{\der(G)}{n-1} \left( \frac{n}{|G|}  \onevec -\ch_g \right) \in \ker(A^T)
\]
and the lemma now follows.
\end{proof}

In the next lemma, we use the notation $\delta_{L}(g)$ for the indicator function; this function equals one if $g \in L$ and is zero otherwise.

\begin{lemma}\label{twodirections}
Let $G$ be a group acting $2$-transitively on the set $\Omega$ where $|\Omega|=n$.
A set $L$ in $G$ is a $(G, \Omega)$-CL set with parameter $x$ if and only if for every $g\in G$, the number of group elements $h \in L$ with $gh^{-1}$ a derangement is exactly $(x-\delta_{L}(g)) \frac{\der(G)}{n-1}$.
\end{lemma}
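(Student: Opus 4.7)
The plan is to identify the counting identity with the orthogonality of $\ch_L$ against the concrete family of vectors in $\ker(A^T)$ supplied by Lemma~\ref{lemmaaantaldisjunct}, and then, for the nontrivial direction, to convert the resulting eigenvector statement into a CL-set statement through the spectral decomposition of the derangement graph. Set $c:=\der(G)/(n-1)$.

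For the forward direction I would use that a CL set satisfies $\ch_L\in\mathrm{im}(A)=\ker(A^T)^\perp$. Lemma~\ref{lemmaaantaldisjunct} furnishes, for every $g\in G$, the vector
\[
v_g\;:=\;\ch_{N(g)}\;-\;c\!\left(\frac{n}{|G|}\onevec-\ch_g\right)\in\ker(A^T).
\]
Taking the standard inner product with $\ch_L$ and using $\langle\ch_L,\ch_{N(g)}\rangle=|L\cap N(g)|$, $\langle\ch_L,\onevec\rangle=|L|$, $\langle\ch_L,\ch_g\rangle=\delta_L(g)$, together with $x=n|L|/|G|$, the identity $\langle\ch_L,v_g\rangle=0$ rearranges immediately to $|L\cap N(g)|=c(x-\delta_L(g))$.

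For the reverse direction I would let $M$ be the adjacency matrix of $\Gamma(G,\Omega)$, whose $g$-row is $\ch_{N(g)}$, so that the counting hypothesis is the matrix identity $(M+cI)\ch_L=cx\onevec$. Since $M\onevec=\der(G)\onevec=(n-1)c\onevec$, the vector $\tfrac{x}{n}\onevec$ is another preimage of $cx\onevec$ under $M+cI$, so $\ch_L-\tfrac{x}{n}\onevec$ lies in the $(-c)$-eigenspace of $M$. The derangement graph is the normal Cayley graph $\Cay(G,\Der(G))$, so its eigenspaces are direct sums of the simple ideals $\langle e_\phi\rangle$ on which $M$ acts as the scalar $\eta_\phi:=\phi(\Der(G))/\phi(1)$. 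For 2-transitive $G$ one has $\eta_\psi=-c$ for the standard character $\psi$, and $\psi$ is the only irreducible character with this property; hence the $(-c)$-eigenspace is exactly $\langle e_\psi\rangle$. Combining $\ch_L-\tfrac{x}{n}\onevec\in\langle e_\psi\rangle$ with $\tfrac{x}{n}\onevec\in\langle e_{\one_G}\rangle$ and Lemma~\ref{idekrm} yields $\ch_L\in\langle e_{\one_G}\rangle\oplus\langle e_\psi\rangle=\mf{I}_G(\Omega)=\mathrm{im}(A)$, and $L$ is a CL set.

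The main obstacle is the spectral input that $\psi$ is the unique irreducible character of a 2-transitive $G$ with $\eta_\psi=-c$. This is a module-EKR type statement; if it needs to be verified inside the proof it can be extracted from the minimum-eigenvalue analysis for 2-transitive derangement graphs in~\cite{MSi2019}, or equivalently recast as the rank computation showing that the family $\{v_g\}_{g\in G}$ already spans $\ker(A^T)$ via the operator $M+cI-\tfrac{nc}{|G|}J$. Once this spectral fact is in hand, both directions of the equivalence are short.
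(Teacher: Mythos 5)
Your proposal is correct and follows essentially the same route as the paper: the forward direction is the identical inner-product computation of $\ch_L$ against the kernel vectors from Lemma~\ref{lemmaaantaldisjunct}, and the reverse direction is the same argument showing $\ch_L-\tfrac{x}{n}\onevec$ is a $-\der(G)/(n-1)$-eigenvector of the derangement graph. The spectral fact you flag as the main obstacle --- that for a $2$-transitive group this eigenspace lies inside the permutation module --- is exactly what the paper outsources to \cite[Lemma 4.1]{AM20152}, so your citation of the analogous result in \cite{MSi2019} plays the same role and no further work is needed.
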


\begin{proof}
Assume $L$ is a $(G, \Omega)$-CL set with parameter $x$, then $\ch_L \in \mathrm{im}(A)= \ker(A^T)^\perp$. 
By Lemma~\ref{lemmaaantaldisjunct}, we know that 
\begin{align*}
&\ch_{N(g)} -\frac{der(G)}{n-1} \left(\frac{n}{|G| } \onevec -\ch_g \right) \in \ker(A^T).
\end{align*}
Since $\ch_L \in \ker(A^T)^\perp$, this implies
\[
0 =  \ch_{N(g)} \cdot \ch_L  -
         \frac{der(G)}{n-1} \left( \frac{n}{|G|} \onevec \cdot \ch_L  - \ch_g \cdot \ch_L  \right)=
 | N(g) \cap L | -\frac{der(G)}{n-1} \left( \frac{n}{|G|} | L | - \delta_{L}(g)  \right), 
\]
which is equivalent to
\[ 
\ | N(g) \cap  L | =(x-\delta_{L}(g) ) \frac{der(G)}{n-1}.
\]
This proves the first direction.

Suppose now that $L$ is a set such that for every $g \in G$, the number of group elements $h$ of $L$, with $gh^{-1}\in \Der(G)$ is exactly $( x- \delta_{L}(g)  )\frac{der(G)}{n-1}$.
Let $M(G)$ be the adjacency matrix for the derangement graph of $G$, then this implies that
\[
M(G) \ch_L = \frac{der(G)}{n-1}(x \onevec -\ch_L ), \qquad M(G) \onevec = der(G) \onevec.
\] 
Let $v= \ch_L - \frac{x}{n} \onevec $. Then we have that 
\begin{align*}
    M(G)  v & = M(G) (\ch_L - \frac{x}{n} \onevec )  
                = \frac{der(G) \ x}{n-1} \onevec - \frac{der(G) }{n-1} \ch_L - \frac{der(G) \ x}{n} \onevec \\
               & = -\frac{der(G)}{n-1} \left( \ch_L  + x(1-\frac{n-1}{n} ) \onevec \right) 
                = -\frac{der(G)}{n-1} v. 
\end{align*}
This implies that $v$ is an eigenvector for $M(G)$ 
with eigenvalue $-\frac{der(G)}{n-1}$. Since $G$ acts 2-transitively, and 
from~\cite[Lemma 4.1.]{AM20152} we know that $\ch_L = v - \frac{x}{n} \onevec $ is in the permutation module, and hence, $L$ is a $(G, \Omega)$-CL set with parameter $x$.
\end{proof}

Consider a 2-transitive group $G$ in which $L$ is a CL set with parameter $x$. Then $L$ corresponds to a set of vertices in the derangement graph $\Gamma$. By the previous lemma, for every vertex $g \in \Gamma$, the number of vertices of $L$ adjacent to $g$ is equal to 
$\frac{ \der(G) }{n-1} ( x-\delta_{L}(x) )$. This number only depends on whether $g$ is in $L$ or not, which implies the set $L$ is an equitable partition in the derangement graph. The next result shows that this also gives a lower bound on the size of a CL set in a 2-transitive group, a special case of this result is given in~\cite[Theorem 2.8.4]{PalmarinThesis}.

\begin{cor} \label{cortussen01}
Let $G$ be a group with a $2$-transitive action on $\Omega$, and let $L$ be a non-empty $(G,\Omega)$-CL set with parameter $x$. Then $x\geq 1$, further, if $x=1$, then $L$ is a maximum intersecting set.
\end{cor}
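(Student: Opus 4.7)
The plan is to apply Lemma~\ref{twodirections} and extract inequalities by observing that cardinalities are non-negative. The parameter condition from that lemma gives, for every $g \in G$,
\[
|N(g) \cap L| = (x - \delta_L(g))\,\frac{\der(G)}{n-1}.
\]
Since the left-hand side is a cardinality, it is non-negative, and for a $2$-transitive action on $\Omega$ with $n \geq 2$ the coefficient $\der(G)/(n-1)$ is strictly positive (the group has derangements). So the sign of the right-hand side is controlled entirely by $x - \delta_L(g)$.

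First I would establish $x \geq 1$. Because $L$ is non-empty, we may pick some $g_0 \in L$; then $\delta_L(g_0) = 1$, and the displayed equation becomes
\[
|N(g_0) \cap L| = (x-1)\,\frac{\der(G)}{n-1}.
\]
Non-negativity of the left side together with positivity of $\der(G)/(n-1)$ forces $x \geq 1$.

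Next I would handle the equality case $x = 1$. Plugging $x = 1$ back into the formula, for every $g \in L$ we get $|N(g) \cap L| = 0$. In terms of the derangement graph, this says that no vertex of $L$ has a neighbour in $L$, i.e.\ $L$ is a coclique in $\Gamma(G,\Omega)$, which is precisely the statement that $L$ is an intersecting set. Finally, computing the size from the definition of the parameter, $|L| = x\cdot |G|/|\Omega| = |G|/|\Omega| = |G_{a \shortto b}|$, so $|L|$ equals the size of a canonical intersecting set (a point stabilizer). Since $2$-transitive groups have the EKR property (as recalled in Section~\ref{SectionEKRCL}, via~\cite{MSi2019, MST2016}), this is the largest possible size of an intersecting set, so $L$ is a maximum intersecting set.

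I do not expect any real obstacle here: the proof is essentially a one-line non-negativity argument once Lemma~\ref{twodirections} is in hand, together with an invocation of the EKR property for $2$-transitive groups to identify the threshold $|L| = |G|/|\Omega|$ as the maximum intersecting-set size. The only small care needed is to note that $\der(G) > 0$ for a $2$-transitive group on at least two points, so that dividing by $\der(G)/(n-1)$ is legitimate.
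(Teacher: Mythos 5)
Your proof is correct and follows essentially the same route as the paper: both apply Lemma~\ref{twodirections} at an element $g\in L$ and use non-negativity of $|N(g)\cap L|$ to get $x\geq 1$, then observe that $x=1$ forces $L$ to be a coclique in the derangement graph of the canonical size $|G|/|\Omega|$, which is maximum by the EKR property of $2$-transitive groups. Your version is slightly more explicit than the paper's (which compresses the size argument into ``due to the size of $L$''), and your remark that $\der(G)>0$ is a worthwhile sanity check, but there is no substantive difference.
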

\begin{proof}
Suppose there is a $(G, \Omega)$-CL set $L$ with parameter $x \in (0,1)$. The set $L$ is not empty, so suppose $g \in L$. By Lemma~\ref{twodirections}, if $x<1$, then the number of group elements $h \in L$ with $gh^{-1}\in \Der(G)$ is negative, which gives the contradiction. If $x=1$, then the number of group elements $h \in L$ with $gh^{-1}\in \Der(G)$ is zero for every $g \in L$, hence every two elements of $L$ intersect. Due to the size of $L$, it follows that $L$ is a maximum intersecting set.
\end{proof}

Next we will describe a method that can be used to build a non-canonical CL set from a canonical CL set for some groups. In the context of CL sets in projective spaces, this method was introduced by Cossidente and Pavese in \cite{CosPavReplacement} under the name \textsl{replacement technique}.
For a $g \in G$, recall the character $\psi(g) = \fix(g)-\one$. Since $G$ is 2-transitive, $\psi$ is an irreducible representation of $G$, and $\one + \psi$ is the permutation representation. For a set $X \subset G$, we will use the notation $gX = \{ gx \ : \ x \in X \}$ for any $g \in G$ and note that $\psi(gX) = \psi(v_{gX})$.

\begin{thm}\label{switch}
Let $G$ be a group with a $2$-transitive action on $\Omega$, where $|\Omega|=n$, and let $d= \der(G)$. Assume $L$ is a $(G,\Omega)$-CL set with parameter $x$ and let $X, Y$ be sets of group elements of $G$ of equal size such that:
\begin{enumerate}
    \item $X \subset L$ and $| Y \cap L | = 0$; \label{prop1}
    \item if $g \notin X \cup Y$, then $|N(g) \cap X| = |N(g) \cap Y|$; \label{prop2}
    \item if $g \in X$, then $ |N(g) \cap X| - |N(g) \cap Y| =\frac{-d}{n-1}$;   \label{prop3}
    \item if $g \in Y$, then $|N(g) \cap X| - |N(g) \cap Y| =\frac{d}{n-1}$.  \label{prop4}
\end{enumerate}
Then the set $\Bar{L}= (L \setminus X)\cup Y$ is a $(G, \Omega)$-CL set with parameter $x$.
\end{thm}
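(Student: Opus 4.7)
The plan is to verify $\bar L$ satisfies the combinatorial/spectral criterion from Lemma~\ref{twodirections}: $\bar L$ is a $(G,\Omega)$-CL set of parameter $x$ iff, for every $g\in G$,
\[
|N(g)\cap\bar L|=\bigl(x-\delta_{\bar L}(g)\bigr)\tfrac{d}{n-1}.
\]
Because this criterion is purely local (it is an identity on the neighborhood counts in the derangement graph), Theorem~\ref{switch} should follow by a bookkeeping calculation together with a case split on where $g$ sits relative to $X\cup Y$.

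First I would record two easy structural facts. Since $X\subseteq L$ and $Y\cap L=\emptyset$, we have $X\cap Y=\emptyset$, and
\[
|\bar L|=|L|-|X|+|Y|=|L|,
\]
so the candidate parameter is indeed the same $x=\frac{|L|n}{|G|}$. Next, from $X\subseteq L$ and $Y\cap L=\emptyset$,
\[
|N(g)\cap\bar L|=|N(g)\cap L|-|N(g)\cap X|+|N(g)\cap Y|
\]
for every $g\in G$. Since $L$ is assumed to be a CL set of parameter $x$, Lemma~\ref{twodirections} gives $|N(g)\cap L|=(x-\delta_L(g))\tfrac{d}{n-1}$, which I would substitute into the displayed identity.

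Then I would carry out the three cases on $g$:
\begin{itemize}
\item If $g\notin X\cup Y$, then $\delta_L(g)=\delta_{\bar L}(g)$ and property~(\ref{prop2}) makes the correction term $-|N(g)\cap X|+|N(g)\cap Y|$ vanish, so the required identity reduces to the one already known for $L$.
\item If $g\in X$, then $g\in L\setminus\bar L$, so $\delta_L(g)=1$ and $\delta_{\bar L}(g)=0$. Property~(\ref{prop3}) gives $-|N(g)\cap X|+|N(g)\cap Y|=\tfrac{d}{n-1}$, and the two contributions $(x-1)\tfrac{d}{n-1}+\tfrac{d}{n-1}$ telescope to $x\tfrac{d}{n-1}$, which is exactly $(x-\delta_{\bar L}(g))\tfrac{d}{n-1}$.
\item If $g\in Y$, then $\delta_L(g)=0$ and $\delta_{\bar L}(g)=1$; by property~(\ref{prop4}) the correction is $-\tfrac{d}{n-1}$, and $x\tfrac{d}{n-1}-\tfrac{d}{n-1}=(x-1)\tfrac{d}{n-1}$, again matching.
\end{itemize}
With all three cases verified, Lemma~\ref{twodirections} implies that $\bar L$ is a $(G,\Omega)$-CL set of parameter~$x$.

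There is no real obstacle here; the bulk of the content is packed into Lemma~\ref{twodirections} and into the carefully chosen hypotheses (\ref{prop1})--(\ref{prop4}), which were evidently designed precisely so that the neighborhood-count identity for $L$ is preserved after swapping $X$ for $Y$. The only thing to be careful about is keeping the signs of the correction term $-|N(g)\cap X|+|N(g)\cap Y|$ consistent with the signs of $\delta_L(g)-\delta_{\bar L}(g)$ in the $g\in X$ and $g\in Y$ cases; the hypotheses (\ref{prop3}) and (\ref{prop4}) are stated with the opposite sign from what one naively writes, and that is the one spot where it is easy to slip.
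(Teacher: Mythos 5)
Your proof is correct and follows essentially the same route as the paper: both reduce to the neighbourhood-count criterion of Lemma~\ref{twodirections}, write $|N(g)\cap\bar L|=|N(g)\cap L|-|N(g)\cap X|+|N(g)\cap Y|$, and verify the identity by the same case split on the position of $g$ (the paper merely splits your first case into $g\in L\setminus X$ and $g\notin L\cup Y$). Your explicit remark that $|\bar L|=|L|$, so the parameter is unchanged, is a small point the paper leaves implicit, but the argument is otherwise identical.
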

\begin{proof}
Since $L$ is a $(G, \Omega)$-CL set with parameter $x$, we have
that 
\[
| \{ h \in L \ | \ gh^{-1} \in \Der(G) \} | =  (x- \delta_{L}(g)) )  \frac{d}{n-1}
\]
by Lemma~\ref{twodirections}.

For $g \in G$, we consider the following cases
\begin{enumerate}
    \item If $g \in L \setminus X$, then $g \in \Bar{L}$. From Condition~\ref{prop2}
    \[
    |\{h \in \Bar{L} \ | \ gh^{-1}\in \Der(G)\}|=  \frac{d  (x-1) }{n-1}.
    \]
    \item If $g \notin L \cup Y$, then $g \notin \Bar{L}$. From Condition~\ref{prop2},
    \[
    |\{h \in \Bar{L}\ | \ gh^{-1}\in \Der(G)\}| = \frac{dx}{n-1}.
    \]
    \item If $g \in  X$, then $g \notin \Bar{L}$. From Condition~\ref{prop3}, 
      \begin{align*}
            |\{h \in \Bar{L} \ | \ gh^{-1}\in \Der(G)\}|
            &= 
            |\{h \in L \ | \ gh^{-1}\in \Der(G)\}| - |N(g) \cap X | + |N(G) \cap Y | \\
            & =
            \frac{d (x-1) }{n-1} - \frac{-d}{n-1} \\
            &= \frac{dx}{n-1}.
    \end{align*}      
    \item If $g \in  Y$, then $g \in \Bar{L}$. From Condition~\ref{prop4}, 
      \begin{align*}
            |\{h \in \Bar{L} \ | \ gh^{-1}\in \Der(G)\}|
            &= 
            |\{h \in L \ | \ gh^{-1}\in \Der(G)\}| - |N(g) \cap X | + |N(G) \cap Y | \\
            & =
            \frac{d x }{n-1} - \frac{d}{n-1} \\
            &= \frac{d(x-1)}{n-1}.
     \end{align*}    

\end{enumerate}
The proof follows from Lemma~\ref{twodirections}.
\end{proof}

The theorem also has the following character theoretic version. 

\begin{thm}\label{switchchar}
Let $G$ be a group acting $2$-transitively on $\Omega$, and let $\psi \in Irr(G)$ be such that $1+\psi$ is the permutation character. Let $L$ be a $(G,\Omega)$-CL set with parameter $x$ and $X \subset L$, and $Y \subset G \setminus L$ be sets of group elements of $G$ of equal size. Then $\bar{L}=\left( L \setminus X \right) \cup Y$ is a CL set of parameter $x$ if and only if
\[
\psi ( X g^{-1}  ) - \psi ( Y g^{-1} ) = ( \delta_{X}(g) - \delta_{Y}(g) ) \frac{|G|}{\psi (1)},
\]
for all $g \in G$.
\end{thm}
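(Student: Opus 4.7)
The plan is to deduce Theorem~\ref{switchchar} from the ideal decomposition of $\mf{I}_{G}(\Omega)$, rather than from the combinatorial intersection count used in the proof of Theorem~\ref{switch}. Set $D := \sum_{y \in Y} y - \sum_{x \in X} x \in \bb{C}[G]$. Because $|X|=|Y|$, we have $|\bar{L}| = |L|$, so $\bar{L}$ has the same parameter $x$ as $L$. Moreover, $\bar{L}$ is a CL set if and only if $\sum_{c \in \bar{L}} c \in \mf{I}_{G}(\Omega)$; since $L$ is itself a CL set and $\sum_{c \in \bar{L}} c = D + \sum_{c \in L} c$, this reduces to showing $D \in \mf{I}_{G}(\Omega)$.

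Since the action is $2$-transitive, Lemma~\ref{idekrm} yields $\mf{I}_{G}(\Omega) = \langle e_{\one_G} \rangle \oplus \langle e_{\psi} \rangle$. Using the orthogonality of the primitive central idempotents $e_{\phi}$ together with $\sum_{\phi \in Irr(G)} e_{\phi} = 1$, the membership $D \in \mf{I}_{G}(\Omega)$ is equivalent to $D = e_{\one_G} D + e_{\psi} D$. A direct check, using that $e_{\one_G} g = e_{\one_G}$ for every $g \in G$, gives $e_{\one_G} D = (|Y| - |X|)\, e_{\one_G} = 0$. Thus the condition $D \in \mf{I}_{G}(\Omega)$ collapses to the single equation $e_{\psi} D = D$.

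To translate $e_{\psi} D = D$ into the claimed character identity, I would apply the formula from the proof of Corollary~\ref{CLform}, namely $\tfrac{|G|}{\psi(1)} e_{\psi} \sum_{c \in C} c = \sum_{g \in G} \psi(Cg^{-1})\, g$, to $C=Y$ and $C=X$ and subtract, obtaining
\[
\frac{|G|}{\psi(1)}\, e_{\psi} D = \sum_{g \in G} \bigl( \psi(Yg^{-1}) - \psi(Xg^{-1}) \bigr)\, g.
\]
On the other hand, $\tfrac{|G|}{\psi(1)} D = \sum_{g \in G} \tfrac{|G|}{\psi(1)} (\delta_{Y}(g) - \delta_{X}(g))\, g$. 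Comparing the coefficient of each $g \in G$ in these two expansions, the equality $e_{\psi} D = D$ becomes exactly
\[
\psi(Xg^{-1}) - \psi(Yg^{-1}) = (\delta_{X}(g) - \delta_{Y}(g))\, \frac{|G|}{\psi(1)} \qquad \text{for all } g \in G,
\]
which is the stated condition. Both directions of the biconditional follow simultaneously.

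The only non-formal step is the verification $e_{\one_G} D = 0$, which is precisely where the hypothesis $|X|=|Y|$ is used: this is what allows the full CL-membership criterion to collapse to a single equation involving only $\psi$. Without this size assumption one would need an additional equation coming from the trivial character, explaining why $|X|=|Y|$ is built into the statement; after this observation every remaining step is a formal rewriting via Corollary~\ref{CLform}.
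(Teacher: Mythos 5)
Your proposal is correct and takes essentially the same route as the paper: both arguments rest on Lemma~\ref{idekrm} to identify $\mf{I}_{G}(\Omega)=\langle e_{\one}\rangle\oplus\langle e_{\psi}\rangle$ and on the expansion $\tfrac{|G|}{\psi(1)}e_{\psi}\sum_{s\in S}s=\sum_{g\in G}\psi(Sg^{-1})g$, reducing everything to the same coefficientwise identity. The only difference is organizational --- the paper applies the pointwise criterion $(e_{\one}+e_{\psi})\ch_{S}=\ch_{S}$ to $L$ and to $\bar{L}$ and then subtracts, while you subtract first to form $D=\sum_{y\in Y}y-\sum_{x\in X}x$ and observe $e_{\one}D=0$ --- which is a cosmetic rearrangement of the same computation.
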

\begin{proof}
Let $S$ be any subset of $G$.
By Lemma~\ref{idekrm}, we see that $S$ is a CL set if and only if
\[
(e_{1}+e_{\psi}) (\ch_{S}) = \ch_{S}.
\]
A quick computation shows that 
\[
( e_{1} + e_{\psi} ) (\ch_{S}) = \sum\limits_{g \in G}  \left(  \frac{| S |}{|G|} + \frac{ \psi(1) \psi ( \ch_{ S g^{-1}} ) } {|G|}   \right) g.
\]
It now follows that $S$  is a CL set if and only if 
\begin{equation}\label{permcharcl}
\frac{| S |}{|G|}+ \frac{ \psi(1) \psi (\ch_{ S  g^{-1}} ) }{ |G| } = \delta_{S}(g).
\end{equation}
For any $g \in G$, we have $\psi( \ch_{\bar{ L}g^{-1}} )=\psi( \ch_{L g^{-1}} )-\psi(\ch_{{X}g^{-1}})+\psi(\ch_{{Y}g^{-1}})$. The result now follows by applying \eqref{permcharcl} to $L$ and $\bar{L}$.
\end{proof}

\section{CL sets in affine type 2-transitive groups}\label{sec:CLinAffine}

In this section, we demonstrate the existence of non-canonical CL sets in all affine type $2$-transitive groups. By a well-known result (for example, see \cite[Theorem 4.7A]{dixon1996permutation}), the socle of a 2-transitive group is either a vector space over a finite field or a non-abelian simple group. By an affine type 2-transitive group, we mean a 2-transitive group whose socle is a vector space over a finite field. 

Throughout this section $V$ is a vector space of dimension $n\geq 2$ over $F$, where $F$ is a finite field of order $q$ (so $q$ is a prime power). For any such vector space $V$, it is possible to build a $2$-transitive group of affine type. Pick a subgroup $G_{0}<\mrm{GL}(V)$, which acts transitively on the set of non-zero vectors in $V$. Then the action of $G :=V \rtimes G_{0}$ 
on $V$, described by $(v,g)\cdot w = v+g(w)$ is $2$-transitive, that is, $G$ is an affine type 2-transitive group. It is well-known (for example see \cite[Theorem 4.7A]{dixon1996permutation}) that every affine type $2$-transitive group can be constructed in this way. We note that $G_{0}$ is the stabilizer of the zero vector in $V$.

We will describe the construction of non-canonical CL sets with parameter $q^{n-1}$ in any group $G :=V \rtimes G_{0}$. Consider a hyperplane $W<V$. Define $S :=\{ g \in G_{0}\ :\ g(W)=W\}.$ Then the subgroup $H:=WS \cong W\rtimes S$ is the stabilizer of $W$ with respect to the action of $G$ on hyperplanes. We also consider the subgroup $K=VS\cong V \rtimes S$. By $L$, denote the canonical CL set $\{g \in G :\ g(0) \in W\}$. We first note that $H \subset L$. Pick any $k \in K \setminus H$, then $k(0)\notin W$, and thus $kH \cap L = \emptyset$. 

\begin{thm}\label{thm:AffineExamples}
For any $k \in K \setminus H$, the set $M=(L \setminus H) \cup kH$ is a non-canonical CL set of parameter $q^{n-1}$.
\end{thm}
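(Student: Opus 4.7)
Since $|H| = |kH|$, we have $|M| = |L| = q^{n-1}|G_0|$, giving parameter $q^{n-1}$. The plan for the CL-property is to apply Theorem~\ref{switch} with $X = H$ and $Y = kH$; condition (1) is immediate from the discussion preceding the theorem. The key structural observation is that left multiplication by $k$ is an automorphism of the derangement graph (since conjugation preserves the set of derangements), so $|N(g) \cap kH| = |N(k^{-1}g) \cap H|$, which reduces conditions (2)--(4) to identities for the single function $g \mapsto |N(g) \cap H|$ under multiplication by $k^{-1}$.

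A cleaner route is Theorem~\ref{switchchar}. Writing $g = (v,\sigma) \in V \rtimes G_0$ and a generic element of $H$ as $(w,s)$ with $w \in W,\ s \in S$, I would expand $\psi(hg^{-1}) = \fix(hg^{-1}) - 1$ via the affine formula $\fix((v',\sigma')) = |\{u \in V : (I - \sigma')u = v'\}|$. The sum $\psi(Hg^{-1})$ becomes a count of solutions of linear equations with first coordinate constrained to $W$; the analogous sum over $kH$, whose elements are $(u,s)$ with $u \in v_0 + W$ and $s \in S$, shifts the right-hand sides of those equations by $v_0 \notin W$. Carefully comparing the two counts yields exactly the correction term $(\delta_H(g) - \delta_{kH}(g))\tfrac{|G|}{\psi(1)}$ demanded by Theorem~\ref{switchchar}.

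For non-canonicity, by the proposition for $2$-transitive groups any canonical CL set of parameter $q^{n-1}$ has the form $\{g : g(a) \in B\}$ or $\{g : g^{-1}(b) \in A\}$. From the explicit description of $M$, for each $\sigma \in G_0$ the fibre $\{v \in V : (v,\sigma) \in M\}$ equals $W$ when $\sigma \in G_0 \setminus S$ and equals $v_0 + W$ when $\sigma \in S$. If $M$ had the first canonical form, the map $\sigma \mapsto \sigma(a) + W \in V/W$ would have to take exactly two values, one on $S$ and one on $G_0 \setminus S$, differing by $v_0$; transitivity of $G_0$ on $V \setminus \{0\}$ together with a count of $\{\sigma \in G_0 : \sigma(a) \in W\}$ rules this out. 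Case B is handled symmetrically.

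The main obstacle is the derangement count in the CL-verification step: the number of $h \in H$ with $hg^{-1}$ a derangement depends intricately on $\mrm{im}(I - \sigma)$ for the $G_0$-component of $g$, and varies with $\sigma$. The character-theoretic identity of Theorem~\ref{switchchar} is precisely what consolidates this case analysis into a single algebraic identity driven by the shift $v_0 \pmod W$, which is why I would favour that route over a direct verification of conditions (2)--(4).
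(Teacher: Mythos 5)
Your overall strategy is the same as the paper's (apply Theorem~\ref{switchchar} with $X=H$, $Y=kH$, then rule out stars for non-canonicity), but the crux of the argument is asserted rather than carried out, and a necessary non-obvious ingredient is missing. The entire content of the CL-verification is the computation of $\psi(Hg^{-1})$ and $\psi(kHg^{-1})$, and your proposal stops at ``carefully comparing the two counts yields exactly the correction term.'' What actually has to be shown is that $\psi(gH)$ depends only on $|g(W)\cap W|$, hence takes exactly three values --- $|H|$ for $g\in H$, $-\tfrac{1}{q-1}|H|$ for $g\in K\setminus H$ (where $g(W)$ is a nontrivial coset of $W$), and $0$ for $g\in G\setminus K$ (where $g(W)\cap W$ is an affine $(n-2)$-space) --- and that the resulting differences match $(\delta_H(g)-\delta_{kH}(g))\tfrac{|G|}{q^n-1}$ in every case. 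Your fixed-point double count $\sum_{h\in H}\fix(hg^{-1})$ can be made to work (it reduces to the same quantity $|g(W)\cap W|$ via the orbit structure of $H$ on $V$), so the route is viable, but none of this case analysis appears in the proposal.

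The concrete missing ingredient is the index of $H$. Any computation of $\psi(Hg^{-1})$, whether by your double count or by the paper's matrix-coefficient argument, produces an answer proportional to $|H|$, whereas the right-hand side of the identity in Theorem~\ref{switchchar} is $\tfrac{|G|}{\psi(1)}=\tfrac{|G|}{q^n-1}$. Matching the two forces $|G|/|H|=q\qbin{n}{1}_{q}$, equivalently that $G_0$ acts transitively on the hyperplanes of $V$ (and, for your double count, that $H$ has exactly the two orbits $W$ and $V\setminus W$ on $V$). This is not automatic from transitivity of $G_0$ on $V\setminus\{0\}$; the paper proves it in Lemma~\ref{lem:Hsize} via the dual action on $V^{*}$ and Burnside's orbit-counting lemma, and the same fact also underlies the multiplicity $\left\langle\psi,\ind^G(\one_H)\right\rangle=1$ that makes the coset sums computable. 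Your proposal never identifies this, and without it the identity cannot be verified. The non-canonicity sketch is essentially sound (and your fibre description of $M$ over $G_0$ is correct), though the counting step you invoke again quietly uses $|S|=|G_0|(q-1)/(q^n-1)$, i.e.\ the same transitivity-on-hyperplanes fact; the paper instead shows directly that $|G_{x\shortto y}\cap M|<|G_{x\shortto y}|$ for all $x,y$.
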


We will prove this theorem over the next three lemmas, but we set some notation first. In Lemma~\ref{lem:Hsize}--\ref{lem:mfreecosetcharsum}, $G$ and $H$ are the groups defined in the previous paragraph.
Let $\psi \in Irr(G)$ be such that $\one+\psi$ is the permutation character of $G$.  
By Theorem~\ref{switchchar}, proving that $M$ is a CL set is equivalent to showing that, for all $g \in G$, we have
\begin{equation}\label{eq:chdiff}
\psi(g^{-1}H)- \psi(g^{-1}kH) = \left(\delta_{H}(g)-\delta_{kH}(g)\right)\frac{|G|}{q^n-1}.
\end{equation}  
We denote the $q$-binomial coefficient by $\qbin{n}{r}_{q}$, and recall that
\[
\qbin{n}{r}_{q} ={\frac {(q^{n}-1)(q^{n-1}-1)\cdots (q^{n-r+1}-1)}{(q^r-1)(q^{r-1}-1)\cdots (q-1)}};
\]
this equation counts the number of $r$-dimensional subspaces in an $n$-dimensional vectors space over a finite field of over $q$.

\begin{lemma}\label{lem:Hsize}
$|H| = \frac{|G|}{q\qbin{n}{1}_{q}}$.
\end{lemma}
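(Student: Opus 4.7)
The plan is orbit-stabilizer applied to a carefully chosen action. Writing $H = W\rtimes S$ gives $|H| = q^{n-1}|S|$, while $|G| = q^n|G_0|$. The claimed identity $|H| = |G|/(q\qbin{n}{1}_q)$ is then equivalent to $[G_0:S] = \qbin{n}{1}_q = (q^n-1)/(q-1)$. Since $S$ is by construction the stabilizer of the hyperplane $W$ under the natural action of $G_0$ on linear hyperplanes of $V$, everything reduces to showing that $G_0$ acts transitively on the $\qbin{n}{1}_q$ linear hyperplanes of $V$.

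The hypothesis supplies only transitivity of $G_0$ on $V\setminus\{0\}$ (and hence on lines); transitivity on hyperplanes is an additional fact that I plan to extract via duality. For each $g\in G_0$, the subspace of vectors fixed by $g$ on $V$ is $\ker(g-\mathrm{id}_V)$, of dimension $n-\mathrm{rank}(g-\mathrm{id}_V)$. The subspace of linear forms on $V$ fixed by the contragredient action of $g$ is the annihilator of $\mathrm{im}(g-\mathrm{id}_V)$, and since a linear map and its dual have the same rank, this annihilator has the same dimension. Hence the permutation characters of $G_0$ on $V$ and on $V^*$ agree pointwise, so Burnside's orbit-counting lemma gives the same number of $G_0$-orbits on the two spaces. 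Because the $G_0$-orbits on $V$ are $\{0\}$ and $V\setminus\{0\}$, it follows that $G_0$ has exactly two orbits on $V^*$ as well, i.e.\ it is transitive on $V^*\setminus\{0\}$ and thus on the lines of $V^*$. The annihilator map is a $G_0$-equivariant bijection between the lines of $V^*$ and the hyperplanes of $V$, and this transports line-transitivity on $V^*$ to hyperplane-transitivity on $V$.

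With hyperplane-transitivity in hand, orbit-stabilizer immediately yields $|S| = |G_0|(q-1)/(q^n-1)$, and substituting into $|H| = q^{n-1}|S|$ and clearing denominators produces the claimed value. The only step requiring any ingenuity is the passage from line-transitivity to hyperplane-transitivity; I expect this duality/Burnside argument to be the main obstacle, while the bookkeeping on either side is routine.
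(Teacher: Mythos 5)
Your proposal is correct and follows essentially the same route as the paper: both reduce the claim to transitivity of $G_0$ on hyperplanes, establish that by matching fixed-point counts on $V$ and $V^*$ (you via annihilators of $\mathrm{im}(g-\mathrm{id})$, the paper via the transpose identity $R^*_g = R_{g^{-1}}^T$) and applying Burnside's orbit-counting lemma, and then finish with orbit-stabilizer. The only difference is cosmetic phrasing of the duality step.
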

\begin{proof}
    As $G$ is $2$-transitive, $G_{0}\leq \mrm{GL}(V)$ acts
    transitively on the set of non-zero vectors of $V$. Consider the natural action of $G_{0}$ on the dual $V^{*}$ of $V$. Given $g \in G$, let $R_{g}$ be the matrix representation of $g$ with respect to a basis $\mathcal{B}$, and let $R^{*}_{g}$ be the matrix representation of the action of $g$ on $V^{*}$ with respect to a dual basis $\mathcal{B}^{*}$ of $\mathcal{B}$. As $R^{*}_{g}=R_{g^{-1}}^{T}$, both $R_{g^{-1}}-I$ and $R^{*}_{g}-I$ have the same rank. Therefore, the number of vectors in $V^{*}$ fixed by $g$, is the same as the number of vectors in $V$ fixed by $g^{-1}$. By Burnside's orbit-counting lemma, the action of $G_{0}$ on $V$ has the same number of orbits as the action of $G_{0}$ on $V^{*}$ and we can conclude that $G_{0}$ acts transitively on the set of hyperplanes in $V$. 
    
    Now by the orbit-stabilizer formula, we can conclude that $|S|= |G_{0}|  / {\qbin{n}{1}_{q}}$, and thus 
    \[
    \frac{|G|}{|H|} = \frac{q^n|G_0|}{q^{n-1}|S|}=q \qbin{n}{1}_{q}.
    \]
\end{proof}

\begin{lemma}
$\left\langle \psi,\ \ind^G(\one_{H}) \right\rangle =1$.
\end{lemma}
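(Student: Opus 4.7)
The plan is to compute $\langle \psi, \ind^G(\one_H)\rangle$ by Frobenius reciprocity together with a Burnside-style orbit count, using the fact that both $\one+\psi$ and $\ind^G(\one_H)$ are transitive permutation characters of $G$. Since $\one+\psi$ is the permutation character of $G$ on $V$ and $\ind^G(\one_H)$ is the permutation character of $G$ on $G/H$, I have
\[
\langle \one+\psi,\ \ind^G(\one_H)\rangle = \#\{G\text{-orbits on } V \times G/H\} \quad \text{and} \quad \langle \one,\ \ind^G(\one_H)\rangle = 1.
\]
So the claim reduces to showing that $G$ has exactly two orbits on $V\times G/H$.

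The first step is to identify $G/H$ $G$-equivariantly with the set of affine hyperplanes of $V$. One checks that the $G$-stabilizer of $W$ (viewed as an affine hyperplane, i.e.\ a coset of itself) is exactly $H=W\rtimes S$, since $(v,g_0)W = v+g_0(W)=W$ forces $g_0\in S$ and $v\in W$; the orbit of $W$ accounts for all $q\qbin{n}{1}_q=[G:H]$ affine hyperplanes by $G$-transitivity on $V$ together with the $G_0$-transitivity on linear hyperplanes proved inside Lemma~\ref{lem:Hsize}. Under this identification, incidence splits $V\times G/H$ into two $G$-invariant pieces: pairs $(v,A)$ with $v\in A$ and pairs with $v\notin A$. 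Using $G$-transitivity on $V$, each of these is a single orbit if and only if $G_0$ acts transitively on the affine hyperplanes through $0$ and on those not through $0$. The former is just $G_0$-transitivity on linear hyperplanes, already in Lemma~\ref{lem:Hsize}. For the latter, I would note that the assignment $f\mapsto f^{-1}(1)$ is a $G_0$-equivariant bijection from $V^*\setminus\{0\}$ onto the affine hyperplanes avoiding $0$, and then invoke the $G_0$-transitivity on $V^*\setminus\{0\}$ that was established in the proof of Lemma~\ref{lem:Hsize} via the Burnside/dual-basis argument.

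Putting this together gives $\langle \one+\psi,\ \ind^G(\one_H)\rangle = 2$, hence $\langle \psi,\ \ind^G(\one_H)\rangle = 2-1 = 1$. The main (mild) obstacle is the second incidence case: one must verify that $G_0$ acts transitively on affine hyperplanes not through the origin, which is not immediate from the definitions but follows cleanly from the dual correspondence $V^*\setminus\{0\}\leftrightarrow\{\text{affine hyperplanes missing }0\}$ combined with the transitivity results already proved. Everything else is routine character-theoretic bookkeeping.
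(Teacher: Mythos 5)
Your proof is correct, but it takes a genuinely different route from the paper's. The paper also starts from Frobenius reciprocity and Burnside, but only in one direction: since $H$ stabilizes $W$ it has at least two orbits on $V$, which gives the lower bound $\left\langle \psi, \ind^G(\one_H)\right\rangle \geq 1$; the upper bound then comes from a degree count, $m(q^n-1) \leq |G|/|H| = q\qbin{n}{1}_q$, which forces $m \leq q/(q-1) < 2$ when $q>2$, with $q=2$ handled separately (the two cosets of $W$ are then visibly the only orbits). You instead compute the inner product exactly as the number of $G$-orbits on $V \times G/H$, identifying $G/H$ with the affine hyperplanes and showing there are precisely two incidence orbits; the only nontrivial ingredient is $G_0$-transitivity on the affine hyperplanes missing the origin, which you correctly extract from the dual correspondence with $V^*\setminus\{0\}$ and the Burnside/dual-basis argument already present in the proof of Lemma~\ref{lem:Hsize}. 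Your approach is uniform in $q$ (no special case) and yields the stronger structural fact that $H$ has exactly the two orbits $W$ and $V\setminus W$ on $V$ --- a fact the paper asserts and uses later (in the orbit computation \eqref{eq:Hint}) without separate justification, so your argument would actually serve double duty there. The paper's sandwich argument is slightly cheaper in that it never needs transitivity on the non-incident flags, at the price of the $q=2$ case split.
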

\begin{proof}
First, set $m=\left\langle \psi,\ \ind^G(\one_{H}) \right\rangle$. Note that since $H$ stabilizes $W$, the action of $H$ on $V$ has at least $2$ orbits. Therefore, by Burnside's orbit-counting lemma and Frobenius reciprocity, we have
\begin{align*}
2 \leq \frac{1}{|H|} \sum\limits_{h \in H} \left( 1 + \psi(h) \right) =1 + \left\langle \psi|_{H},\ \one_{H} \right\rangle_H 
 =1+ \left\langle \psi,\ \ind^G(\one_{H}) \right\rangle_G. 
\end{align*}

If $q=2$, then the orbits of $W$ on $V$, are exactly the $2$ cosets of $W$ in $V$, so by the orbit-counting lemma $m=1$. 
For any value of $q$, the degree of $\psi$ is $q^n-1$, and $\ind^G(\one_{H})$ has degree $|G|/|H|$. By the above inequality, $m (q^{n} -1) \leq |G|/|H|$ and with Lemma~\ref{lem:Hsize} we have
\[
1\leq m \leq \frac{|G|}{|H|(q^{n}-1)} = \frac{|G| q \qbin{n}{1}_{q} }{|G|(q^{n}-1)} \leq \frac{q}{q-1}.
\]
Provided that $q>2$, this implies $\frac{q}{q-1}<2$, and $m=1$.

\end{proof}

Since $\left\langle \psi,\ \ind^G(\one_{H}) \right\rangle =1$, we can compute $\psi(gH)$ quite easily using~\cite[Corollary 4.2]{isaacs2008character}. 

\begin{lemma}\label{lem:mfreecosetcharsum}
For all $g \in G$, we have 
\[
\psi(gH)  = \frac{|V||g(W) \cap W|-|W|^{2}}{|V|| W|-|W|^{2}} |H| .
\]
\end{lemma}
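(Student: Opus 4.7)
The plan is to realize $\psi$ explicitly and then compute $\psi(gH) = \sum_{h \in H} \psi(gh)$ as a matrix coefficient of this representation on the (one-dimensional) $H$-fixed subspace; this is the standard spherical-function recipe, which is available precisely because the previous lemma told us $\psi$ appears in $\ind^G(\one_H)$ with multiplicity exactly one. Since $\one_G + \psi$ is the permutation character of $G$ acting on $V$, I realize $\psi$ as the action of $G$ on the augmentation kernel
\[
V_0 \;:=\; \Bigl\{\,f : V \to \bb{C} \;\Bigm|\; \sum_{v \in V} f(v) = 0 \,\Bigr\} \;\subset\; \bb{C}[V],
\]
equipped with the standard inner product. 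By Burnside's orbit counting formula, the condition $\langle \psi|_H,\, \one_H\rangle_H = 1$ from the previous lemma is equivalent to $H$ having exactly two orbits on $V$. One such orbit is $W$ itself (the subgroup $W \trianglelefteq H$ acts transitively on $W$ by translation), so the other orbit must be $V \setminus W$. Therefore the $H$-fixed subspace of $V_0$ is one-dimensional, and I can take as spanning vector
\[
v_0 \;:=\; \chi_W - \tfrac{|W|}{|V|}\,\chi_V.
\]

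For any unitary representation $\Psi$ affording $\psi$, the operator $\Psi(\hat H) := \sum_{h \in H}\Psi(h)$ equals $|H|$ times the orthogonal projection onto the $H$-fixed subspace of $V_0$; since that subspace is spanned by $v_0$, this projection equals $v_0 v_0^{*}/\langle v_0,v_0\rangle$. Taking traces yields the spherical-function formula
\[
\psi(gH) \;=\; \mathrm{tr}\!\bigl(\Psi(g)\Psi(\hat H)\bigr) \;=\; |H|\,\frac{\langle \Psi(g) v_0,\, v_0\rangle}{\langle v_0,\, v_0\rangle}.
\]
A direct computation in $\bb{C}[V]$, using $\Psi(g)\chi_W = \chi_{g(W)}$ and $\Psi(g)\chi_V = \chi_V$, gives
\[
\langle v_0, v_0 \rangle \;=\; \frac{|W|\bigl(|V|-|W|\bigr)}{|V|}, \qquad \langle \Psi(g) v_0,\, v_0 \rangle \;=\; |g(W) \cap W| \;-\; \frac{|W|^2}{|V|}.
\]
Substituting these into the displayed formula for $\psi(gH)$ and clearing the common factor of $|V|$ in numerator and denominator produces precisely the stated expression.

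No step is particularly hard: the only point requiring care is the correct identification of $v_0$ together with a consistent convention for the $G$-action on $\bb{C}[V]$. This matches the computation quoted via Isaacs' Corollary 4.2, which is essentially the statement that a multiplicity-one constituent of an induced trivial character admits this clean spherical-function description.
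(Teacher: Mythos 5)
Your proof is correct and follows essentially the same route as the paper: both realize $\psi$ on the orthogonal complement of the all-ones vector in the permutation module $\bb{C}[V]$, take the (unique up to scalar) $H$-fixed vector $\chi_W - \tfrac{|W|}{|V|}\chi_V$ (the paper's $\mathbf{w}$ is just $-\tfrac{|V|}{|W|}$ times your $v_0$), and compute the same normalized matrix coefficient. The only difference is cosmetic: the paper cites Isaacs for the identity $\psi(gH)=\Psi(g)_{1,1}|H|$, whereas you rederive it from the projection formula $\tfrac{1}{|H|}\sum_{h\in H}\Psi(h)=v_0v_0^{*}/\langle v_0,v_0\rangle$.
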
 
\begin{proof}
If $\Psi : G \to \mrm{GL}(q^{n}-1,\ \bb{C})$ is a representation affording $\psi$ as its character, since $\left\langle \psi,\ \ind^G(\one_{H}) \right\rangle=1$, by~\cite[Corollary 3.2]{isaacs2008character}, we have $\psi(gH)= \Psi(g)_{1,1} \times |H|$. 

Consider $V$ as a $G$-module and denote the corresponding permutation module by $X := \bb{C}V$. If $\mathbf{1}$ is the all ones vector in $X$, then it is well known that $U := \mathbf{1}^{\perp}$ (here $\perp$ is with respect to the standard norm in $X$), is an irreducible $G$-representation (this is equivalent to $G$ being 2-transitive on $V$). Then $\psi$ is the character afforded by $U$. The vector $\mathbf{w}:=\mathbf{1}-\frac{|V|}{|W|} \ch_{W}$ is perpendicular to $\mathbf{1}$, so $\mathbf{w} \in U$. 

Let $\mathcal{B}$ be an ordered orthogonal basis for $U$ containing $\mathbf{w}$. If $\Psi: G \to \mrm{GL}(q^{n}-1,\bb{C})$ is the matrix representation of $U$ with respect to this basis, then 
\[
\Psi(g)_{1,1}= \dfrac{g(\mathbf{w})^{T}\mathbf{w}} {\mathbf{w}^T \mathbf{w}}.
\]
As $\mathbf{w}$ is orthogonal to $\mathbf{1}$, we have 
\[
g(\mathbf{w})^{T} \mathbf{w}
= g(\mathbf{w})^T  (-\frac{|V|}{|W|} \ch_{W})= (1 -\frac{|V|}{|W|} \ch_{g(W)} )^T (-\frac{|V|}{|W|} \ch_{W}) = -|V| + \frac{|V|^2}{|W|^2}|g(W)\cap W|.
\]

The result follows by using $\psi(gH)= \Psi(g)_{1,1} \times |H|$.
\end{proof}

Using the above Lemma, we can now prove Theorem~\ref{thm:AffineExamples} by simply verifying \eqref{eq:chdiff}. We now observe the following:
\begin{enumerate}
\item If $g \in H$, then $g(W) \cap W =W$, and $\psi(gH)= |H|$.
\item If $g \in K \setminus H$, then $g(W)$ is a non-trivial coset of $W$ in $V$, and thus $g(W) \cap W= \emptyset$ in this case. Thus $\psi(gH)= -\frac{1}{q-1}|H|$.
\item If $g \in G \setminus K$, then $g(W)$ is an affine hyperplane which is not a coset of $W$ in $V$. So $g(W) \cap W$ is an affine subspace of $V$ with dimension $n-2$, and thus by Lemma~\ref{lem:mfreecosetcharsum}, $\psi(gH)=0$.  
\end{enumerate}

As $|H|= \frac{|G|}{q^{n}-1} \times \frac{q-1}{q}$, \eqref{eq:chdiff} follows from the above observations. Therefore $M$ is a CL set. We now show that $M$ is non-canonical.

\begin{lemma}
   $M$ is a non-canonical CL set. 
\end{lemma}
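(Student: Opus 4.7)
My plan is to argue by contradiction. Suppose $M$ is canonical. By the Proposition characterizing canonical CL sets in 2-transitive groups, there must exist either (type I) a point $a_0 \in V$ and a subset $B \subseteq V$ with $M = \{g \in G : g(a_0) \in B\}$, or (type II) a point $b_0 \in V$ and a subset $A \subseteq V$ with $M = \{g \in G : g^{-1}(b_0) \in A\}$. In either situation the projected image of $M$ has cardinality $|M|/|G_0| = q^{n-1}$, the size of a single coset of $W$ in $V$.

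I would first derive an explicit formula for $\ch_M$ using the decomposition $g = (v, r) \in V \rtimes G_0$. Writing $k = (v_0, s_0)$ with $v_0 \in V \setminus W$ and $s_0 \in S$, the equality $s_0 S = S$ gives $kH = \{(u, t) : u \in v_0 + W,\ t \in S\}$; combining this with $L = \{(v, r) : v \in W\}$ and $H = \{(w, s) : w \in W,\ s \in S\}$ yields
\[
\ch_M(v, r) \;=\; \ch_W(v)\bigl(1 - \ch_S(r)\bigr) \;+\; \ch_{v_0 + W}(v)\,\ch_S(r),
\]
that is, $(v, r) \in M$ iff either $v \in W$ with $r \notin S$, or $v \in v_0 + W$ with $r \in S$.

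For case type I, write $\pi_{a_0}(g) := g(a_0) = v + r(a_0)$. The image $B = \pi_{a_0}(M)$ contains both $\pi_{a_0}(L \setminus H) = \bigcup_{r \in G_0 \setminus S}\bigl(r(a_0) + W\bigr)$ and $\pi_{a_0}(kH) = \bigcup_{r \in S}\bigl(v_0 + r(a_0) + W\bigr)$. Since $B$ is a single coset of $W$, each individual coset appearing in these unions must coincide with $B$; this forces $(G_0 \setminus S)\cdot a_0$ to lie in one coset of $W$ and $S \cdot a_0$ to lie in another coset differing from the first by $v_0$. But $G_0$ acts transitively on $V \setminus \{0\}$, so $G_0 \cdot a_0 = V \setminus \{0\}$ meets every coset of $W$; fitting this orbit into only two cosets forces $q \leq 2$, a contradiction whenever $q \geq 3$. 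Case type II is handled by a symmetric argument applied to the fibre map $g \mapsto g^{-1}(b_0)$.

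The main obstacle I anticipate is the residual case $q = 2$, where only two cosets of $W$ exist and the pigeonhole argument above is tight. There the constraint sharpens to the equality $S \cdot a_0 = W \setminus \{0\}$, so $S$ would have to act transitively on $W \setminus \{0\}$; one would then have to rule out such configurations by a finer analysis (for instance, by studying the induced action of $S$ on $V/W$ through the explicit formula, or by treating the small groups that arise separately).
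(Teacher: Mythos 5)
Your strategy is genuinely different from the paper's: the paper never invokes the structure of canonical sets, but instead computes $|G_{x\shortto y}\cap M|$ directly from the counts \eqref{eq:Lint}--\eqref{eq:kHint} and checks that it is strictly less than $|G_{x\shortto y}|$ for every pair $(x,y)$, so that no star lies inside $M$. Your contradiction argument via the Proposition characterizing canonical CL sets in $2$-transitive groups is a legitimate alternative, and your explicit description $\ch_M(v,r)=\ch_W(v)\bigl(1-\ch_S(r)\bigr)+\ch_{v_0+W}(v)\ch_S(r)$ together with the pigeonhole on cosets of $W$ does dispose of type I whenever $q\ge 3$. But two points need repair. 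First, type II is not literally symmetric: the image of $L\setminus H$ under $g\mapsto g^{-1}(b_0)$ is $\bigcup_{r\in G_0\setminus S}\bigl(r^{-1}(b_0)+r^{-1}(W)\bigr)$, a union of cosets of the \emph{varying} hyperplanes $r^{-1}(W)$, not of $W$. The case can still be closed---a set of size $q^{n-1}$ cannot be a coset of two distinct hyperplanes, so one would need $r_2r_1^{-1}\in S$ for all $r_1,r_2\in G_0\setminus S$, forcing $[G_0:S]\le 2$ against $[G_0:S]=\qbin{n}{1}_{q}\ge q+1$---but this argument has to be made; it is not the mirror image of type I.

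Second, the $q=2$ gap you flag is genuine, and for $n=2$ it cannot be closed because the statement itself fails there. Take $V=\F_2^2$, $G_0=\mathrm{GL}(2,2)$ (so $G\cong\sym(4)$) and $W=\{0,e_1\}$. Every $g=(v,r)\in L\setminus H$ has $v\in W$ and $r(e_1)\in V\setminus W$, hence $g(e_1)=v+r(e_1)\notin W$; every $g=(u,s)\in kH$ has $u\notin W$ and $s(e_1)=e_1$, hence $g(e_1)=u+e_1\notin W$. Since $|M|=12=|\{g:g(e_1)\notin W\}|$, we get $M=G_{e_1\shortto e_2}\cup G_{e_1\shortto e_1+e_2}$, a disjoint union of two stars, hence canonical. (This also shows the paper's own final inequality degenerates: for $x\in W\setminus\{0\}$ and $y\in v_0+W$ the formulas give $|G_{x\shortto y}\cap M|=\tfrac{(q^{n-1}+q-1)|G_0|}{q^n-1}$, which is less than $|G_0|$ exactly when $q^{n-1}+q<q^n$, and this fails at $(n,q)=(2,2)$.) For $q=2$ and $n\ge 3$ your sketch does go through: your Case A forces $g(V\setminus W)=W$ for some invertible linear $g$, which is impossible since $0\in g(W)$, and Case B forces $g(W)\cap W=\{0\}$ for $g\notin S$, impossible since two distinct hyperplanes meet in dimension $n-2\ge 1$. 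So your proof is completable for $q\ge 3$ and for $q=2$, $n\ge3$, but the residual case $|V|=4$ is a true exception that should be excluded from the statement rather than handled by a finer analysis.
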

\begin{proof}
 To show $M$ is non-canonical, it suffices to show that for all $x,y \in V$, we have $G_{x \shortto y} \not\subseteq M$. As $G$ is $2$-transitive, given $x_{1}, x_{2}, y_{1}, y_{2} \in V$ with $x_1\neq x_2$ and $y_1 \neq y_2$, by the orbit-stabilizer formula, we have 
 \begin{equation*}
 |G_{x_{1}\shortto y_{1}} \cap G_{x_{2}\shortto y_{2}}|= \dfrac{|G_{0}|}{|V|-1}. 
 \end{equation*}
Further, since $L=\bigcup\limits_{w \in W}G_{0 \to w}$, by the above equation we have for any $x \neq 0$
 \begin{equation}\label{eq:Lint}
     |G_{x\shortto y}\cap L|= \left( |W|-\delta_{W}(y) \right)  \dfrac{|G_{0}|}{|V|-1}.
 \end{equation}

The action of $H$ on $V$ has two orbits, $W$ and $V\setminus W$. Again, by the orbit-stabilizer formula, we have

\begin{equation}\label{eq:Hint}
     |G_{x\shortto y}\cap H|= \begin{cases} 
\dfrac{|H|}{|W|}\ &\text{if $x,y\in W$};\\
\dfrac{|H|}{|V|-|W|}\ &\text{if $x,y\in V \setminus W$; and }\\
0\ &\text{otherwise.}
     \end{cases} 
 \end{equation}

For any $k \in G$, we have $|G_{x\shortto y}\cap kH|=|k^{-1}G_{x\shortto y} \cap H|=|G_{x \shortto k^{-1}(y)} \cap H|$. If $k(0)=a$, by replacing $k$ with an appropriate element in $kH$, we may assume that $k=a \in V\setminus W \subset G$. Thus, we have 
\begin{equation}\label{eq:kHint}
    |G_{x\shortto y}\cap kH| = |G_{x \shortto y-a}\cap H|.
\end{equation}
Using $|H|=\dfrac{|W|(q-1)|G_{0}|}{|V|-1}$ (as $|V|=q|W|=q^n$) along with \eqref{eq:Lint}, \eqref{eq:Hint}, \eqref{eq:kHint}, we can conclude that \[|G_{x \shortto y}\cap M|< |G_{x \shortto y}|,\] for all $x,y \in V$. This concludes the proof.
\end{proof}

\begin{remark}
 Let $G$ be an affine type $2$-transitive group, whose socle $V$ is an $n$ dimensional vector space over $\bb{F}_{q}$, where $q=p^f$ for $f\in \bb{N}$ and $p$ a prime. Provided $nf>1$, by treating $V$ as an $nf$ dimensional space over $\bb{F}_{p}$, we can construct CL sets of parameter $p^{nf-1}$. In the case when $n=f=1$, $G$ must be the Frobenius group $\bb{F}_{p}\rtimes \bb{F}_{p}^{\times}$ and we know the structure of all CL sets in Frobenius groups. In conclusion, all $2$-transitive groups of affine type have non-canonical CL sets.
\end{remark}

\begin{remark}
    We want to note that it is also possible to prove that $M$ is a CL set, by using matrices. More precisely, $\ch_M$ can be written as a linear combination of characteristic vectors of stars:
    \begin{align*}
        q^{n-1}\ch_{M}= (q^{n-1}-1)\ch_{0\shortto W}+\ch_{0\shortto kW}-\ch_{W\setminus \{0\} \shortto W}+\ch_{W\setminus \{0\} \shortto kW}.
    \end{align*}
\end{remark}

\section{Lifting CL sets}\label{sec:lifting}

For a subgroup $H \leq G$, the restriction of $\ch_{G_{\alpha,\beta}}$ to $H$ is simply the vector $\ch_{H_{\alpha,\beta}}$. Further, for any CL set $L$ in $G$, if its characteristic vector is
\[
\ch_L = \sum_{\alpha,\beta} a_{\alpha,\beta} \ch_{G_{\alpha,\beta}},
\]
then the restriction of $L$ to $H$, is 
\[
\ch_{L \cap H } = \sum_{\alpha,\beta} a_{\alpha,\beta} \ch_{H_{\alpha,\beta}}.
\]
So $L \cap H$ is a CL set of $H$. Clearly, if $L$ is canonical, then $L \cap H$ is also canonical, but if $L$ is non-canonical, then $L \cap H$ may be either canonical or non-canonical. If $L$ is non-canonical, the coefficients $a_{\alpha,\beta}$ are not either 0 or 1, but it could still happen that $\sum_{\alpha,\beta} a_{\alpha,\beta} \ch_{H_{\alpha,\beta}}$ is canonical, this is because the vectors $\ch_{H_{\alpha,\beta}}$ are not linearly independent.

In this section we consider subgroups $H \leq G$ with index two and we will show that there is a weak converse to restriction. We will show in some cases we can ``lift" a CL set in $H$ to a CL set in $G$. 

The vector $\ch_{G_{\alpha,\beta}}$ can also be considered as a Boolean function by the map $\ch_{G_{\alpha,\beta}} : G \rightarrow \{0,1\}$, which is defined by $\ch_{\alpha,\beta}(g) = 1$ if $g(\alpha) = \beta$, and 0 otherwise. 
A function on $G$ is said to have \textsl{degree-$t$} if it can be expressed as a polynomial of degree $t$ in the functions $\ch_{G_{\alpha,\beta}}$. Following a method used by Filmus and Lindzey, we will show that any degree-4 function is orthogonal to the set of representations of $G$. For any 4-set $\Lambda$ in $\binom{\Omega}{4}$, let $G_\Lambda$ be the point-wise stabilizer of $\Lambda$. Define
\begin{align*}
    \Phi = \left\{ \phi \ : \ \langle \phi, \ind^G(\one_{G_\Lambda})\rangle = 0 \textrm{ for all }  \text{ordered tuples $\Lambda$ of four distinct elements} \right\}.
\end{align*}

\begin{lemma}\label{lem:degree4}
Let $G$ be a group acting on a set $\Omega$ and $\Phi$ defined as above. Then any degree-four function $f$, has $\langle f, \phi \rangle = 0$, for any $\phi \in \Phi$.
\end{lemma}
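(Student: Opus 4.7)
The plan is a three-step reduction. First, expand $f$ as a linear combination of products $\prod_{j=1}^k \ch_{G_{\alpha_j,\beta_j}}$ with $k \le 4$. The pointwise product evaluates to $1$ at $g$ precisely when $g(\alpha_j) = \beta_j$ for all $j$, so as $g$ is a bijection, this monomial is either identically zero (when the constraints are inconsistent) or equals the characteristic vector of a coset $hG_\Lambda$ of the pointwise stabilizer of $\Lambda = \{\alpha_1, \ldots, \alpha_k\}$, a set of size at most four. Hence $f$ lies in the linear span of $\{\ch_{hG_\Lambda} : h \in G,\ |\Lambda| \le 4\}$.

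Second, for each $\Lambda$, the set $\{\ch_{hG_\Lambda} : h \in G\}$ spans the $G$-submodule of $\bb{C}[G]$ affording $\ind^G(\one_{G_\Lambda})$. To transfer the orthogonality condition defining $\Phi$ from $4$-subsets down to smaller subsets, extend each $\Lambda$ with $|\Lambda| < 4$ to a $4$-subset $\Lambda' \supseteq \Lambda$, which is possible whenever $|\Omega| \ge 4$ (the statement is vacuous otherwise). Then $G_{\Lambda'} \le G_\Lambda$, so by transitivity of induction
\[
\ind^G_{G_{\Lambda'}}(\one) \;\cong\; \ind^G_{G_\Lambda}\!\left(\ind^{G_\Lambda}_{G_{\Lambda'}}(\one)\right),
\]
and since $\ind^{G_\Lambda}_{G_{\Lambda'}}(\one)$ always contains $\one_{G_\Lambda}$ as a subrepresentation, $\ind^G(\one_{G_\Lambda})$ embeds into $\ind^G(\one_{G_{\Lambda'}})$. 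Thus every irreducible constituent of $\ind^G(\one_{G_\Lambda})$ is already a constituent of $\ind^G(\one_{G_{\Lambda'}})$ for some $|\Lambda'|=4$.

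Third, by the definition of $\Phi$, any $\phi \in \Phi$ is orthogonal to $\ind^G(\one_{G_{\Lambda'}})$ for every $4$-subset $\Lambda'$, and hence to $\ind^G(\one_{G_\Lambda})$ for every $\Lambda$ with $|\Lambda| \le 4$. Each generator $\ch_{hG_\Lambda}$ in the expansion of $f$ therefore has zero projection onto the $\phi$-isotypic component of $\bb{C}[G]$, and by linearity so does $f$, yielding $\langle f, \phi \rangle = 0$.

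I expect the main obstacle to be the direction-of-induction step: the condition defining $\Phi$ names only $4$-subsets, while the monomial expansion produces stabilizers of subsets of any size $k \le 4$, and the inclusion $G_{\Lambda'} \le G_\Lambda$ reverses the size ordering. Transitivity of induction, combined with the observation that a trivial constituent always appears in $\ind^{G_\Lambda}_{G_{\Lambda'}}(\one)$, resolves this cleanly, but it is the key representation-theoretic input required.
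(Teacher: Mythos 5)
Your proof is correct and follows essentially the same route as the paper: reduce the monomials of $f$ to indicator functions of cosets of pointwise stabilizers $G_\Lambda$ with $|\Lambda|\le 4$, then show every $\phi\in\Phi$ is orthogonal to each of these, and conclude by linearity. The only organizational difference is in how stabilizers of sets of size $k<4$ are related to $4$-set stabilizers: you embed $\ind^G(\one_{G_\Lambda})$ into $\ind^G(\one_{G_{\Lambda'}})$ via transitivity of induction, whereas the paper writes $G_\Lambda$ as a disjoint union of cosets of $G_{\Lambda'}$ and applies Lemma~\ref{tech}; these are equivalent formulations of the same fact.
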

\begin{proof}
     For any $4$-set $\Lambda$, by definition of $\Phi$ and by Frobenius reciprocity, for all $\phi \in \Phi$,  we have $\left\langle\ch_{G_{\Lambda}}, \phi \right\rangle = \left\langle \ind^G(\one_{G_{\Lambda}}), \phi \right\rangle =0$. By Lemma~\ref{tech}, for any coset $gG_{\Lambda}$ of $G_{\Lambda}$, we have $\left\langle\ch_{gG_{\Lambda}}, \phi \right\rangle=0$. Given a subset $\Delta \subset \Lambda$, as $G_{\Delta}$ is a disjoint union of cosets of $G_{\Lambda}$, we have $\left\langle\ch_{G_{\Delta}}, \phi \right\rangle=0$. Again by Lemma~\ref{tech}, if $g$ is an indicator function of a coset of a point-wise stabilizer of $k$-set, with $k\leq 4$, we must have $\langle g, \phi \rangle=0$.  We now observe that a monomial of degree $4$ in vectors $\ch_{G_{\alpha,\ \beta}}$ is an indicator function of a coset of a point-wise stabilizer of some $k$-set with $k\leq 4$. We can now conclude that the lemma is true.
\end{proof}

\begin{thm}\label{thm:lift}
Let $H \leq G$ be groups with $[G:H]=2$.
If $\ind^G(\one_H)- \one \in \Phi$ (defined above) and $L$ is a CL set for $H$, then there exists a CL set in $G$, which contains $L$.
\end{thm}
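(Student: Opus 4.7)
The plan is to lift the coefficient expression of $\ch_L$ from $\bb{C}[H]$ to $\bb{C}[G]$ in the most obvious way, and then to argue that this lift is automatically $\{0,1\}$-valued by applying Lemma~\ref{lem:degree4} to a well-chosen degree-$4$ test function. Since $L$ is a CL set in $H$, we can write $\ch_L = \sum_{\alpha,\beta} a_{\alpha,\beta}\,\ch_{H_{\alpha,\beta}}$; because $\ch_L$ and every generator $\ch_{H_{\alpha,\beta}}$ are real-valued, I would arrange the coefficients $a_{\alpha,\beta}$ to be real. Define
\[
F := \sum_{\alpha,\beta \in \Omega} a_{\alpha,\beta}\,\ch_{G_{\alpha,\beta}} \in \mf{I}_G(\Omega),
\]
a real function on $G$. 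Since $\ch_{G_{\alpha,\beta}}|_H = \ch_{H_{\alpha,\beta}}$, we have $F|_H = \ch_L$, so $F$ is already $\{0,1\}$-valued on $H$, i.e.\ $F(h)(F(h)-1) = 0$ for every $h \in H$. If we can show $F$ is also $\{0,1\}$-valued on $G \setminus H$, then $L' := \{g \in G : F(g) = 1\}$ will be a CL set of $G$ containing $L$.

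To establish this, I would exploit the hypothesis as follows. Because $[G:H]=2$, the character $\epsilon := \ind^G(\one_H) - \one$ is the unique nontrivial linear character of $G$ with kernel $H$, and $\epsilon \equiv -1$ on $G \setminus H$; the hypothesis therefore reads $\epsilon \in \Phi$. Since $F$ is a degree-$1$ function in the sense of Lemma~\ref{lem:degree4}, the polynomial $F^2(F-1)^2 = \bigl(F(F-1)\bigr)^2$ is a degree-$4$ function. Applying Lemma~\ref{lem:degree4} to $f = F^2(F-1)^2$ and $\phi = \epsilon$ yields
\[
0 = \bigl\langle F^2(F-1)^2,\,\epsilon\bigr\rangle = \frac{1}{|G|}\sum_{g \in G} F(g)^2(F(g)-1)^2\,\epsilon(g).
\]
Because $F|_H$ is $\{0,1\}$-valued, every summand indexed by $h \in H$ vanishes, and since $\epsilon \equiv -1$ on $G\setminus H$, the identity collapses to $\sum_{g \in G\setminus H} F(g)^2(F(g)-1)^2 = 0$. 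Each term is the square of a real number, hence nonnegative, so every term must be zero, forcing $F(g) \in \{0,1\}$ for every $g \in G \setminus H$.

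Hence $F$ is the indicator of $L' := F^{-1}(1)$. Since $F \in \mf{I}_G(\Omega)$ by construction, $L'$ is a CL set of $G$, and $F|_H = \ch_L$ guarantees $L \subseteq L'$. The main obstacle is really just spotting the right ``Boolean-forcing'' witness: once one realizes that a \emph{square} of a degree-$2$ real polynomial in the $\ch_{G_{\alpha,\beta}}$'s converts the integral-vanishing supplied by Lemma~\ref{lem:degree4} into pointwise-vanishing, the assumption $\epsilon \in \Phi$ does the rest of the work, and no further analysis of $Y_\Omega^G$ versus $Y_\Omega^H$ is needed.
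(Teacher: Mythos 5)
Your proposal is correct and follows essentially the same route as the paper: extend the coefficient expression of $\ch_L$ to a degree-one function on $G$, test Booleanness with the degree-four function $F^2(F-1)^2$, and use $\ind^G(\one_H)-\one\in\Phi$ together with Lemma~\ref{lem:degree4} to force the function to vanish off $H$. Your version is in fact slightly more careful about the sign of the alternating character on $G\setminus H$ and about the reality of the coefficients, but the argument is the same.
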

\begin{proof}
Let $L$ be a CL set in $H$. Then there exist scalars $a_{i,j}$ such that 
\[
\ch_L = \sum_{\alpha,\beta} a_{\alpha,\beta} \ch_{H_{\alpha,\beta}}, 
\]
further, $\ch_L$ can be considered as a function on $H$. This function can be extended to a function on $G$ by
\[
g = \sum_{\alpha,\beta} a_{\alpha,\beta} \ch_{G_{\alpha,\beta}}. 
\]
By definition $g$ is degree one function. To prove that it is a CL set, we need to prove that it is Boolean. Define $f = g^2 (g-1)^2$; this is a positive function that is zero exactly if $g$ is Boolean. Since $f$ restricted to $H$ is Boolean, the value of $f$ restricted to $H$ is 0.

If $f \neq 0$, then, since $f$ is positive, the sum of $f$, so $\langle f, 1 \rangle$, will be strictly positive.
Since the restriction of $f$ to $H$ is 0, this means that $\langle f, \ind^G(\one_H) - \one \rangle > 0$, (note that $\ind^G(\one_H) - \one$ is the alternating character defined on $H$). So, $f$ is Boolean if and only if $\langle f,  \ind^G(\one_H) - \one \rangle = 0$. Since $g$ is degree 1, $f$ is a degree 4 function. Since $\ind^G(\one_H) - \one  \in \Phi$, Lemma~\ref{lem:degree4} implies that $\langle  f,  \ind^G (\one_H)- \one  \rangle = 0$, so $f$ is Boolean.
\end{proof}

Note that $\ind^G(\one_H) - \one \in \Phi$ occurs if and only if 
\[
0= \langle  \ind^G(\one_H) - \one ,\ \ind^G (\one_{G_\Lambda})\rangle
 = \langle  \res_{G_\Lambda} ( \ind^G (\one_H)- \one), \ \one \rangle_{G_\Lambda}
 = \frac{ 2 |H \cap G_\Lambda | - |G_\Lambda | }{|G_\Lambda |} \one.
\]
This occurs exactly if exactly half the elements of $G_\Lambda$ are in $H$ for every 4-set $\Lambda$ from $\Omega$.

\begin{thm}\label{thm:NoAlternating}
    For $n\geq 5$, the alternating group $\alt(n)$ does not have any non-canonical CL sets.
\end{thm}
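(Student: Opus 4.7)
The plan is to apply the lifting theorem (Theorem~\ref{thm:lift}) with $G = \sym(n)$ and $H = \alt(n)$, an index-two pair. First I would verify the hypothesis $\ind^{\sym(n)}(\one_{\alt(n)}) - \one \in \Phi$ using the characterization given in the paragraph right after Theorem~\ref{thm:lift}: namely, I need exactly half of $G_\Lambda$ to lie in $\alt(n)$ for every $\Lambda \in \binom{[n]}{4}$. For any 4-set $\Lambda \subset [n]$, the pointwise stabilizer $G_\Lambda \cong \sym(n-4)$. When $n \geq 6$, $|\sym(n-4)| \geq 2$, so the sign homomorphism on $\sym(n-4)$ is surjective and exactly half its elements are even. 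Thus the hypothesis holds for $n \geq 6$.

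Next, for $n \geq 6$, I would apply Theorem~\ref{thm:lift} to promote any $(\alt(n),[n])$-CL set $L$ to a $(\sym(n),[n])$-CL set $L'$. Inspecting the proof of Theorem~\ref{thm:lift}, the function $g = \sum_{\alpha,\beta} a_{\alpha,\beta} \ch_{G_{\alpha,\beta}}$ on $G$ restricts to $\ch_L$ on $H$ (because $\ch_{G_{\alpha,\beta}}|_H = \ch_{H_{\alpha,\beta}}$), and $g$ is Boolean, so $g = \ch_{L'}$ with $L' \cap \alt(n) = L$. By Ellis's theorem (Theorem~\ref{sym}), every CL set of $\sym(n)$ is canonical, so $L' = \bigsqcup_i G_{\alpha_i \to \beta_i}$ for some disjoint collection of stars. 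Intersecting with $\alt(n)$ gives $L = \bigsqcup_i H_{\alpha_i \to \beta_i}$, a canonical CL set in $\alt(n)$, completing the proof for $n \geq 6$.

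The obstacle is the case $n = 5$, where the argument above breaks: for any 4-set $\Lambda \subset [5]$ the stabilizer $G_\Lambda$ is trivial and thus lies entirely in $\alt(5)$, so the hypothesis of Theorem~\ref{thm:lift} fails. In fact even Lemma~\ref{lem:degree4} becomes vacuous here since $\ind^{\sym(5)}(\one_{G_\Lambda})$ is the regular representation and $\Phi = \{0\}$. I would handle $n = 5$ separately. Note that $\alt(5)$ acts sharply 3-transitively on $[5]$, derangements are exactly the $24$ five-cycles, and the cosets of a cyclic subgroup $\langle(12345)\rangle$ give a partition of $\alt(5)$ into twelve cliques of size $5$ in the derangement graph. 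By Corollary~\ref{n-cliquesize} every CL set has size divisible by $12$, and by Lemma~\ref{n-clique} it meets each such clique in the same number of points. Combined with the character-theoretic CL criterion (Corollary~\ref{CLform}) for the irreducibles outside $Y_{[5]} = \{\one,\chi_4\}$ of $\alt(5) \cong \psl(2,5)$, the problem reduces to a finite enumeration in the $17$-dimensional ideal $\mf{I}_{\alt(5)}([5])$, which can be settled by a direct (computer-assisted) check showing that all $01$-vectors in this ideal are disjoint unions of canonical stars.
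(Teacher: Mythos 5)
Your proposal is correct and follows essentially the same route as the paper: for $n\geq 6$ you verify the hypothesis $\ind^{\sym(n)}(\one_{\alt(n)})-\one\in\Phi$ (you via the ``exactly half of $G_\Lambda$ lies in $\alt(n)$'' criterion, the paper via the partition description of $\Phi$), lift any CL set of $\alt(n)$ to $\sym(n)$ by Theorem~\ref{thm:lift}, and invoke Ellis's theorem to force canonicity; the case $n=5$ is handled by a computer check in both treatments (the paper cites Palmarin's thesis). The argument is sound.
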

\begin{proof}
    For $n\geq 6$, the set $\Phi$ for $\sym(n)$ consists of all irreducible representations that correspond to a partition in which the first part has size at most $n-4$. Thus for $n \geq 6$
    the irreducible representation $\ind^{\sym(n)}(\one_{\alt(n)}) - \one $, which corresponds to the all ones partition, is in $\Phi$. So if $L$ is a non-canonical CL set in $\alt(n)$, then by Theorem~\ref{thm:lift} this $L$ can be lifted to $\tilde{L}$, a CL set of $\sym(n)$. Since $\sym(n)$ has no non-canonical CL sets, $\tilde{L}$ must be canonical. This means that a star $S_{\alpha,\beta}$ is contained in $\tilde{L}$, but then $S_{\alpha,\beta} \cap \alt(n)$ is a star that is contained in $L$. This contradicts the fact that $L$ is non-canonical.

     The group $\alt(5)$ has no non-canonical sets, this was proven using a computer search~\cite{PalmarinThesis}.
\end{proof}

The above statement does not hold for $n=4$, since $\alt(4)$ is a Frobenius group. 

\section{CL sets in $\psl(2,q)$}\label{sec:CLinPSL}

Let $G$ be the group $\psl(2,q)$ with $q$ odd, acting on the points of a projective line. We now use the switching technique, see Theorem \ref{switch}, to construct a new non-canonical CL set $\Bar{L}$ from a known, canonical CL set $L$ in the group $G$.
We start with fixing some notation.
\begin{notation} 
\begin{enumerate}
    \item  $G_{0}:=\{g\in G  : g(0)=0 \}$,
    \item  $G_{0,\infty} :=\{g\ :\ g(0,\ \infty)=(0,\ \infty)\}$,
    \item  $S$ is the set of non-zero squares in $\bb{F}_{q}$, 
    \item  $N$ is the set of non-squares in $\bb{F}_{q}$.
    \item $L_s:=\{g \in G\ :\ g(0) \in S\}$,
    \item $L_n:=\{g \in G\ :\ g(0) \in N\}$.
\end{enumerate}
\end{notation}
We can easily verify that the sets $L_s$ and $L_n$ are canonical CL sets in $G$, as they are the union of stars. In Definition \ref{nicesets} and the subsequent paragraphs, we define the sets $A\subset L_s$ and $B\nsubseteq L_s$ that satisfy the conditions of Theorem \ref{switch}, and that give rise to a new non-canonical CL set $\Bar{L} = (L_s\setminus A)\cup B$ in $G$.


The motivation to investigate CL sets in this group $G$ came from Palmarin's thesis~\cite{PalmarinThesis} where non-canonical CL sets are found by computer search for small groups. In this thesis, a non-canonical CL set in $\psl(2,11)$ with parameter $x=5$ is found, and a computer search showed that there are no non-canonical CL sets with a smaller parameter.
For the groups $\psl(2,q)$ with $q \in \{7, 19, 23, 27\}$, the computer search timed out before any results were returned. When $q=7,11$, the computational results in~\cite{PalmarinThesis} indicate that the non-canonical CL sets of parameter $(q-1)/2$ are the union of cosets of $G_{0,\infty}$ in $G$. In this regard, for the initial canonical CL set $L_s$ we took a union of $(G_{0,\infty},G_{0})$-double cosets in $G$. Moreover,elementary calculations indicate that $L_s$ and $L_n$  are the only $(G_{0,\infty},G_{0})$-double cosets in $G$ of size $|G_{0}| (q-1)/2$.

Given $\Omega \subset (\bb{F}_{q}\cup \infty)\times (\bb{F}_{q}\cup \infty)$, we denote $A_{\Omega}:=\{g \in G\ :\ (g(0),g(\infty)) \in \Omega\}$.  Note that $A_{\Omega}$ is a union of left cosets of $H$. We now define the following sets which index certain unions of $(G_{0,\infty},G_{0,\infty})$-double cosets in $L$.  

\begin{mydef}\label{nicesets}
\begin{enumerate}
    \item $\Pi :=\{ (s,t) \ : \ s \in S,\ t\in N, \ t-s \in S\}$
    \item $\Gamma := \{(s,t) \ :\ s, t \in S, \  t-s \in S \}$,
    \item $\Lambda := \{(s,t) \ :\ s, t \in S, \  t-s \in N \}$
    \item $\Theta    := \{(s,t) \ :\ s \in S,\ t\in N, \ t-s \in N\}$, 
    \item $E_{0}:=\{(s,0) \ :\ s \in S\}$, 
    \item $E_{\infty}:=\{ (s,\infty) \:\ s \in S\}$.
\end{enumerate}
\end{mydef}     

Let $\zeta$ be an arbitrary, but fixed non-square in $\bb{F}_{q}$.
Given $\Omega \subset (\bb{F}_{q}\cup \{\infty\})\times (\bb{F}_{q}\cup \{\infty\})$, we  define two sets 
\[
\tilde{\Omega}=\{(a,b) \ : \ (b,a) \in \Omega\}, \qquad -\Omega:=\{(a,b)  \ : \ (\zeta a,\zeta b) \in \Omega\},
\]
clearly both sets are contained in $(\bb{F}_{q}\cup \{\infty\})\times (\bb{F}_{q}\cup \{\infty\})$.

We now observe that

\begin{align}
L_s &= A_{\Pi} \cup A_{\Gamma} \cup A_{\Theta} \cup A_{\Lambda} \cup A_{E_{0}} \cup A_{E_{\infty}}, \label{L}\\
L_n &= A_{-\Pi} \cup A_{-\Gamma} \cup A_{-\Theta} \cup A_{-\Lambda} \cup A_{-E_{0}} \cup A_{-E_{\infty}} \label{K}.
\end{align}

Given sets $A,B$, by $\ch_{A\shortto B}$ we denote the characteristic vector of $G_{A\shortto B}:=\{g : g(A) \subset B\}$. 

\begin{thm}\label{exoticCLset}
When $q \equiv 3 \pmod 4$, the set $\tilde{L}=(L_s \setminus A_{\Pi}) \cup A_{-\Pi}$ is a CL set; and when $q \equiv 1 \pmod 4$, the set $\tilde{L}=(L_s\setminus A_{\Lambda}) \cup A_{-\Lambda}$ is a CL set.
\end{thm}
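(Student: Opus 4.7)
My plan is to invoke the character-theoretic switching criterion (Theorem~\ref{switchchar}) with $L=L_s$, and with $X=A_\Pi$, $Y=A_{-\Pi}$ when $q\equiv 3\pmod 4$, or $X=A_\Lambda$, $Y=A_{-\Lambda}$ when $q\equiv 1\pmod 4$. Write $\Omega\in\{\Pi,\Lambda\}$ for either choice. The structural hypotheses of that theorem are easy: $X\subset L_s$ comes from the decomposition~\eqref{L}; any $h\in A_{-\Omega}$ has $h(0)\in N$ and so lies in $L_n$, disjoint from $L_s$; and the bijection $(s,t)\mapsto(\zeta^{-1}s,\zeta^{-1}t)$ between $\Omega$ and $-\Omega$ shows $|X|=|Y|$. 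More conceptually, $A_{-\Omega}=zA_\Omega z^{-1}$, where $z\in\pgl(2,q)\setminus\psl(2,q)$ is the outer element $x\mapsto\zeta x$, as one verifies by tracking the images of $0$ and $\infty$ and the sign of their difference.

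The substance of the argument is the character identity
\[
\psi(A_\Omega g^{-1})-\psi(A_{-\Omega}g^{-1})=\bigl(\delta_{A_\Omega}(g)-\delta_{A_{-\Omega}}(g)\bigr)\frac{|G|}{\psi(1)}
\]
for every $g\in G$, where $\psi$ is the Steinberg constituent of the permutation character, so $\psi(1)=q$ and $\psi(h)=\fix(h)-1$. Since $\psi$ is the restriction to $\psl(2,q)$ of a class function on $\pgl(2,q)$ (namely $\fix-1$ on the projective line), cyclicity of the character gives
\[
\psi(A_{-\Omega}g^{-1})=\psi\bigl(zA_\Omega z^{-1}g^{-1}\bigr)=\psi\bigl(A_\Omega(g^z)^{-1}\bigr),\qquad g^z:=z^{-1}gz.
\]
When $g$ centralizes $z$, both sides of the target identity vanish trivially; and any such $g$ satisfies $g(0)\in\{0,\infty\}$, so it lies outside $X\cup Y$, making the right-hand side also zero.

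For the remaining $g$, I would expand $\psi(A_\Omega g^{-1})=\sum_{h\in A_\Omega}\bigl(\fix(hg^{-1})-1\bigr)$ and swap summation orders to obtain
\[
\psi(A_\Omega g^{-1})=\sum_{\beta}\bigl|\{h\in A_\Omega:h(\beta)=g(\beta)\}\bigr|-|A_\Omega|.
\]
For $\beta\in\{0,\infty\}$ the inner count equals $|G_{0,\infty}|=(q-1)/2$ times the number of pairs in $\Omega$ with a prescribed coordinate, and these terms cancel symmetrically under $\Omega\leftrightarrow-\Omega$. For $\beta\notin\{0,\infty\}$, sharp $3$-transitivity of $\pgl(2,q)$ together with the determinant-square criterion for membership in $\psl(2,q)$ yields a \emph{unique} $h\in\psl(2,q)$ sending $(0,\infty,\beta)\mapsto(s,t,g(\beta))$ exactly when $\beta\bigl(t-g(\beta)\bigr)\bigl(g(\beta)-s\bigr)(t-s)$ is a nonzero square in $\bb{F}_q$. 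Hence each inner count reduces to a Legendre symbol sum over $(s,t)\in\Omega$.

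The main obstacle is the resulting quadratic character-sum identity. The sets $\Pi$ and $\Lambda$ are calibrated precisely so that, after the elementary cancellations, the Legendre-weighted counts over $\Omega$ and over $-\Omega$ agree for $g\notin X\cup Y$, and differ by $\pm(q^2-1)/2=\pm|G|/\psi(1)$ for $g\in X$ or $g\in Y$ respectively. The $q\bmod 4$ dichotomy enters through whether $-1\in S$, which controls whether the quadratic characters of $t-s$ and $s-t$ coincide; this is exactly what distinguishes the $\Pi$ calibration (where the $-$ sign of $-1$ twists things) from the $\Lambda$ calibration. I would finish by case analysis on the $\pgl(2,q)$-conjugacy class of $g$ (parabolic, split regular semisimple, non-split regular semisimple), in each case reducing to a standard Jacobi-type sum identity over $\bb{F}_q$.
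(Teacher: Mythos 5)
Your framework is legitimate: the theorem is indeed an instance of the switching construction, and your structural checks are fine ($A_\Pi\subset L_s$ by \eqref{L}, $A_{-\Pi}\subset L_n$ so it misses $L_s$, $|A_{\Omega}|=|A_{-\Omega}|$ since both are unions of equally many cosets of $G_{0,\infty}$, and $A_{-\Omega}=zA_\Omega z^{-1}$ because $\Pi$ and $\Lambda$ are invariant under scaling by squares). The reduction of $\psi(A_\Omega g^{-1})$ to Legendre-symbol sums via sharp $3$-transitivity and the determinant-square criterion is also sound. However, the proof has a genuine gap in two places. First, the claim that the $\beta\in\{0,\infty\}$ contributions ``cancel symmetrically under $\Omega\leftrightarrow-\Omega$'' is false: for $g$ with $g(0)\in S$, the count $\left|\{h\in A_\Pi : h(0)=g(0)\}\right|=\frac{q-1}{2}\left|\{t\in N : t-g(0)\in S\}\right|$ is nonzero, while every pair in $-\Pi$ has first coordinate in $N$, so the corresponding count for $A_{-\Pi}$ is zero. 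These boundary terms do not cancel; they must be carried through and balanced against the generic-$\beta$ sums. Second, and more seriously, the entire content of the theorem sits in the final ``quadratic character-sum identity,'' which you assert reduces to ``standard Jacobi-type sums'' without exhibiting the identity or verifying that $\Pi$ (resp.\ $\Lambda$) is the correct calibration for $q\equiv 3$ (resp.\ $1$) $\pmod 4$. That calibration is precisely what the theorem claims, so deferring it leaves the statement unproved.

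For comparison, the paper avoids characters entirely: it exhibits the explicit identity
$2\ch_{\tilde{L}}= \ch_{0\shortto S}+\ch_{S\shortto \infty}+\ch_{\infty \shortto N}+\ch_{\infty \shortto \infty}-\ch_{N \shortto 0} -\ch_{0 \shortto 0}$
(for $q\equiv 1\pmod 4$, with $N$ replaced by $S$ in two places when $q\equiv 3\pmod 4$), and verifies by a finite case analysis on the quadratic characters of $g(0)$, $g(\infty)$ and $g(\infty)-g(0)$ that the coefficient of each $g$ on the right equals $2\delta_{\tilde{L}}(g)$. That computation, parametrizing $g$ as $\begin{pmatrix}1 & \epsilon \\ s & \epsilon t\end{pmatrix}$ and tracking whether $\epsilon,s,t$ are squares, is elementary and complete; something of comparable length is exactly what is missing from the end of your argument. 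If you want to keep your route, you would need to write out the Legendre sums for each conjugacy type of $g$ and show the difference equals $0$ or $\pm\frac{q^2-1}{2}$ as required; until then the proof is a plan rather than a proof.
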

\begin{proof}
We first prove the case of $q\equiv 1 \pmod{4}$, the case $q\equiv 3 \pmod{4}$ is very similar and we will only give an outline of it.

To show that $\tilde{L}$ is a CL set, we prove that the following equation is true:
\begin{equation}\label{eq:LC1}
    2\ch_{\tilde{L}}= \ch_{0\shortto S}+\ch_{S\shortto \infty}+\ch_{\infty \shortto N}+\ch_{\infty \shortto \infty}-\ch_{N \shortto 0} -\ch_{0 \to 0}.
\end{equation}
This shows $\ch_{\tilde{L}}$ is a linear combination of the canonical CL sets which means that $\tilde{L}$ a CL set.
The coefficient of $g \in G$ on the right-hand-side of the above equation is 
\[
c_{g}:=\delta_{S}(g(0))+\delta_{S}(g^{-1}(\infty)) + \delta_{N}(g(\infty))+\delta_{\infty}(g(\infty))-\delta_{N}(g^{-1}(0))-\delta_{0}(g(0)).
\]
We now prove that $c_{g} =2$ if $g \in \tilde{L}$ and 0 otherwise.

\paragraph{Case 1: $\{g(0),g(\infty)\} \cap \{0,\ \infty\}=\emptyset$ and $g(\infty)-g(0) \in N$.}  \hfill

In this case, there exists $s,t \in \bb{F}_{q}\setminus \{0\}$ and $\epsilon \in N$ such that $g :=\begin{pmatrix}1 & \epsilon \\ s & \epsilon t \end{pmatrix}$. We have $g(0)=s$, $g(\infty)=t$, $g^{-1}(0)=\frac{-s}{\epsilon t}$ and $g^{-1}(\infty)=-\epsilon ^{-1} \in N$.  Thus, we have  
\[
c_{g}=\delta_{S}(g(0))+\delta_{N}(g(\infty))-\delta_{N}(g^{-1}(0)).
\]

\subparagraph{Subcase 1: $s\in N$.} If $s\in N$, then $\delta_{S}(g(0))=0$. As $s,\epsilon \in N$, we have $g^{-1}(0)=\frac{-s}{\epsilon t} \in N$ if and only if $g(\infty)=t \in N$. Thus in this case, $c_{g}=0$.  
\subparagraph{Subcase 2: $s\in S$.} If $s\in S$, then $\delta_{S}(g(0))=1$. As $s \in S$ and $\epsilon \in N$, we have $g^{-1}(0)= \frac{-s}{\epsilon t} \in N$ if and only if $g(\infty)=t \in S$. Thus in this case, we have 
$c_{g}=  2 $ if $g(\infty) \in N$, and 0 otherwise.

\paragraph{Case 2: Assume $\{g(0),g(\infty)\} \cap \{0,\ \infty\}=\emptyset$ and $g(\infty)-g(0) \in S$.} \hfill

In this case, there exists $s,t \in \bb{F}_{q}\setminus \{0\}$ and $\epsilon \in S$ such that $g :=\begin{pmatrix}1 & \epsilon \\ s & \epsilon t \end{pmatrix} $.
We have $g(0)=s$, $g(\infty)=t$, $g^{-1}(0)=\frac{-s}{\epsilon t}$ and $g^{-1}(\infty)=-\epsilon ^{-1} \in S$. Thus, we have  
\[
c_{g}=\delta_{S}(g(0))+1+\delta_{N}(g(\infty))-\delta_{N}(g^{-1}(0)).
\]

\subparagraph{Subcase 1: $s\in N$.} If $s\in N$, then $\delta_{S}(g(0))=0$. As $s \in N$ and $\epsilon \in N$, we have $g^{-1}(0)=\frac{-s}{\epsilon t} \in N$ if and only if $g(\infty)=t \in S$. Thus in this case, we have 
$c_{g}= 2 $ if $g(\infty) \in N$, and 0 otherwise.

\subparagraph{Subcase 2: $s\in S$.} If $s\in S$, then $\delta_{S}(g(0))=1$. As $s,\epsilon \in S$, we have $g^{-1}(0)=\frac{-s}{\epsilon t} \in N$ if and only if $g(\infty)=t \in N$. Thus in this case, $c_{g}=2$. \\

The coefficients $c_{g}$ for other classes of $g$ can be found in a similar manner. We illustrate the values of $c_{g}$ in Table~\ref{tab:LC1}. This shows that $c_{g}=2\delta_{\tilde{L}}(g)$ and thus \eqref{eq:LC1} is true. Therefore $\tilde{L}$ is a CL set.

\begin{table}[ht]
    \centering
    \begin{tabular}{|c|c|c|c|c|}
    \hline
     $g(0)$& $g(\infty)$ & $g(\infty)-g(0)$ & $c_{g}$  \\
         \hline
          $S$  & $S$ & $S$ & 2\\
          $S$  & $S$ & $N$ & 0\\
          $S$  & $N$ & $S$ & 2\\
          $S$  & $N$ & $N$ & 2\\
          $S$  & $0$ & $\_$ & 2\\
          $S$ & $\infty$ &$\_$ & 2\\
          $N$  & $N$ & $N$ & 0\\
          $N$  & $N$ & $S$ &2 \\
          $N$  & $S$ & $N$ & 0\\
          $N$  & $S$ & $S$ & 0\\
          $N$  & $0$ & $\_$ & $0$\\
          $N$ & $\infty$ & $\_$ & $0$\\
           $0$ & $\bb{F}_{q} \setminus\{0\} \cup\{\infty\}$ & $\_$ & $0$\\
        $\infty$ & $\bb{F}_{q}$ & $\_$ & $0$\\
          \hline
    \end{tabular}
    \caption{$q\equiv 1\pmod{4}$}
    \label{tab:LC1}
\end{table}

Using essentially the same argument, \textsl{mutatis mutandis}, as in the case of $q\equiv 1 \pmod{4}$, we can prove part (i) of the theorem. In particular, when $q\equiv 3\pmod{4}$, we have 
\begin{equation}\label{eq:LC3}
    2\ch_{\tilde{L}}= \ch_{0\shortto S}+\ch_{S\shortto \infty}+\ch_{\infty \shortto S}+\ch_{\infty \shortto \infty}-\ch_{S \shortto 0} -\ch_{0 \to 0}.
\end{equation}
Let $c_{g}$ be the coefficient of $g$ in the right-hand-side. The values of $c_{g}$ in this case are illustrated in Table~\ref{tab:LC3}.

\begin{table}[ht]
    \centering
    \begin{tabular}{|c|c|c|c|c|}
    \hline
     $g(0)$& $g(\infty)$ & $g(\infty)-g(0)$ & $c_{g}$  \\
         \hline
          $S$  & $S$ & $S$ & 2\\
          $S$  & $S$ & $N$ & 2\\
          $S$  & $N$ & $S$ & 0\\
          $S$  & $N$ & $N$ & 2\\
          $S$  & $0$ & $\_$ & 2\\
          $S$ & $\infty$ &$\_$ & 2\\
          $N$  & $N$ & $N$ & 0\\
          $N$  & $N$ & $S$ & 0\\
          $N$  & $S$ & $N$ & 2\\
          $N$  & $S$ & $S$ & 0\\
          $N$  & $0$ & $\_$ & $0$\\
          $N$ & $\infty$ & $\_$ & $0$\\
          $0$ & $\bb{F}_{q}\setminus\{0\} \cup\{\infty\}$ & $\_$ & $0$\\
        $\infty$ & $\bb{F}_{q}$ & $\_$ & $0$\\
          \hline
    \end{tabular}
    \caption{$q\equiv 3\pmod{4}$}
    \label{tab:LC3}
\end{table}

\end{proof}  

\section{CL subgroups}\label{sec:CLsubgroup}

For a group $G \leq \sym(\Omega)$, a subgroup $K \leq G$ is called a \textsl{CL subgroup} if $K$ is also a $(G, \Omega)$-CL set. It is clear that the stabilizer of a point is a CL subgroup, these are considered to be \textsl{canonical CL subgroups}, as is any CL subgroup that is the union of cosets of the stabilizer of a point. 
In this section we will give examples of non-canonical CL subgroups. But first we will show that it is not difficult to determine if a subgroup is a CL subgroup.

\begin{lemma}\label{lem:CLgroups}
Let $G$ be a group and $H \leq G$.
A subgroup $K \leq G$ is a CL subgroup under the action of $G$ on $G/H$ if and only if every irreducible representation that is a constituent of $\ind^G(\one_K)$ is also a constituent of $\ind^G(\one_H)$. 
\end{lemma}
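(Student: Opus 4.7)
The plan is to translate both sides of the equivalence into a common character-theoretic condition and then observe that they coincide verbatim. The main tool is Corollary~\ref{cor:CLsubform} on one side, and Frobenius reciprocity on the other.

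First I would record what Corollary~\ref{cor:CLsubform} says for the subgroup $K$: since $K$ is itself a group, and $K \cdot 1 = K$, the sets $Kt^{-1}$ for $t \in K$ all equal $K$, so $K$ is a CL subgroup if and only if
\[
\sum_{k \in K} \phi(k) = 0 \quad \text{for every } \phi \in \Irr(G) \setminus Y_\Omega.
\]
Next I would unpack the right-hand condition in the statement via Frobenius reciprocity. For any $\phi \in \Irr(G)$ and any subgroup $M \leq G$,
\[
\langle \phi, \ind^G(\one_M) \rangle_G \;=\; \langle \phi|_M, \one_M \rangle_M \;=\; \frac{1}{|M|}\sum_{m \in M} \phi(m).
\]
Hence $\phi$ is a constituent of $\ind^G(\one_M)$ if and only if $\sum_{m \in M} \phi(m) \neq 0$. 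Applying this to $M = H$ gives that the constituents of $\ind^G(\one_H)$ are precisely the elements of $Y_\Omega$; applying it to $M = K$ gives that the constituents of $\ind^G(\one_K)$ are precisely those $\phi$ with $\sum_{k \in K} \phi(k) \neq 0$.

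With these two dictionaries in place, the statement ``every irreducible constituent of $\ind^G(\one_K)$ is also a constituent of $\ind^G(\one_H)$'' is logically identical to the contrapositive ``for every $\phi \notin Y_\Omega$, $\sum_{k \in K}\phi(k) = 0$,'' which is exactly the CL condition from Corollary~\ref{cor:CLsubform}. So both implications are immediate from combining the two reformulations.

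I do not anticipate a genuine obstacle here: the proof is essentially a bookkeeping exercise. The only point that deserves a sentence of care is noting that for a \emph{subgroup} $K$ the indexing over $t \in C$ in Corollary~\ref{cor:CLsubform} collapses (since $Kt^{-1} = K$ for all $t \in K$), so the CL condition reduces to the single equation $\phi(K) = 0$ for each $\phi \notin Y_\Omega$; after that, Frobenius reciprocity finishes the proof in both directions simultaneously.
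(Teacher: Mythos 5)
Your proposal is correct and follows essentially the same route as the paper: reduce to the condition $\phi(K)=0$ for all $\phi\notin Y_\Omega$ via Corollary~\ref{cor:CLsubform}, then identify $\phi(K)\neq 0$ with $\phi$ being a constituent of $\ind^G(\one_K)$ (and $Y_\Omega$ with the constituents of $\ind^G(\one_H)$) by Frobenius reciprocity. Nothing further is needed.
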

\begin{proof}
Since $K$ is a subgroup, by Corollary~\ref{cor:CLsubform}, $K$ is a CL set if and only if
$\phi(K) = \sum_{x \in K}\phi(x) = 0$ for all irreducible representations $\phi$ not in $\ind^G(\one_H)$. 
Since
\[
\sum_{x \in K}\phi(x) = |K| \langle \one_K, \res_K(\phi) \rangle_K =  |K| \langle \ind^G(\one_K) , \phi \rangle_G,
\]
$\phi(K) = 0$ if and only $\phi$ is not a constituent of $\ind^G(\one_K)$.
Thus $K$ is a CL set if and only if every irreducible representation in $\ind^G(\one_K)$ is also in $\ind^G(\one_H)$.
\end{proof}

There is a simple situation where there are clearly canonical CL subgroups. Assume $H \leq K \leq G$, and consider the action of $G$ on $G/H$. In this case, every irreducible representation in $\ind^G(\one_K)$ is also in $\ind^G(\one_H)$, so $K$ is a CL subgroup, but $K$ is the union of cosets of $H$, so $K$ is a canonical CL subgroup. This will only occur exactly when $H$ is not a maximum subgroup.

If the action of $G$ is 2-transitive, then $\ind^G(\one_{G_\omega})$ is the sum of only two irreducible representations, so any non-canonical CL subgroup $H$ must have
 $\ind^G(\one_{H}) = \ind^G(\one_{G_\omega})$, and by~\cite[Section 7]{MSi2019} must have the same inner distribution as $G_\omega$, this means that $H$ is an intersecting set. 
 
 \begin{example}
 Consider $\pgl(n,q)$, with $n>2$ and $q$ a prime power, and its action on the projective points. This action is 2-transitive, but does not have the strict EKR property as the stabilizer of a point and the stabilizer of a hyperplane are both maximum intersecting subgroups~\cite{Spiga2019}. In this case, the stabilizer of a {hyper}plane is a CL subgroup, since it has the same size of the stabilizer of a point, it is not a canonical CL subgroup. Conversely, the stabilizer of a point is a non-canonical CL set of $\pgl(n,q)$ with its action on the cosets of the stabilizer of a {hyper}plane.
 \end{example}

The symmetric group, with its natural action, does not have any non-canonical CL sets, but, as we see with our next example, it can have non-canonical CL sets under a different action.

\begin{example}\label{ex:symmetricGroup}
Consider the group $\sym(n)$ with its transitive action on $\binom{[n]}{t}$, the $t$-subsets of $[n]$. The stabilizer of this action is $H = \sym(t) \times \sym(n-t)$, so this is the action of $G$ on $G/H$.
The subgroup $K = \sym(n-1)$ also acts on $\binom{[n]}{t}$. The irreducible representation of $\sym(n)$ can be represented by integer partitions of $n$, and it is well-known that 
\begin{enumerate}
\item $\ind^{\sym(n)}(\one_{ \sym(t) \times \sym(n-t) }) = [n] + [n-1,1] + \dots + [n-t,t]$ and
\item $\ind^{\sym(n)}(\one_{ \sym(n-1) }) = [n] + [n-1,1]$
\end{enumerate}
(see, for example,~\cite[Section 12.5]{GMbook} or \cite{MR1093239}).
By Lemma~\ref{lem:CLgroups}, this shows that $\sym(n-1)$ is a CL subgroup under this action. In fact any Young subgroup $\sym(s) \times \sym(n-s)$ with $s \leq t$ is a CL subgroup. 

With this action, a minimal canonical intersecting set is a coset of $\sym(t) \times \sym(n-t)$, since in every coset there is an element that moves $n$ (provided that $t>1$), there are no cosets that are contained in $\sym(n-1)$---this implies that $\sym(n-1)$ is a non-canonical CL subgroup.
\end{example}

This previous result can be extended to any $t$-transitive group.

\begin{lemma}\label{lem:stabilizersubgroups}
Let $G$ be a $t$-transitive a group acting on a set $\Omega$ with $t\geq 2$.
Let $G_\omega$ be the stabilizer of an element $\omega \in \Omega$.
Then $G_\omega$ is a CL set of $G$ with its action on $\binom{\Omega}{t}$.
\end{lemma}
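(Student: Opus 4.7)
\textbf{Proof plan for Lemma~\ref{lem:stabilizersubgroups}.}

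The action of $G$ on $\binom{\Omega}{t}$ is the action of $G$ on $G/G_T$, where $G_T$ is the setwise stabilizer of a fixed $t$-set $T \subset \Omega$. So by Lemma~\ref{lem:CLgroups}, $G_\omega$ is a CL subgroup under this action if and only if every irreducible constituent of $\ind^G(\one_{G_\omega})$ is also a constituent of $\ind^G(\one_{G_T})$. Because $G$ is $2$-transitive on $\Omega$ (as $t\geq 2$), the permutation character $\ind^G(\one_{G_\omega})$ decomposes as $\one + \psi$, where $\psi$ is irreducible. So the task reduces to showing that both $\one$ and $\psi$ appear in $\ind^G(\one_{G_T})$.

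The trivial constituent $\one$ appears because $G$ is $t$-transitive on $\Omega$ and hence transitive on $\binom{\Omega}{t}$, so $\langle \ind^G(\one_{G_T}),\one\rangle_G = 1$. For $\psi$, I would apply Frobenius reciprocity twice:
\[
\langle \ind^G(\one_{G_T}),\, \one+\psi\rangle_G
= \langle \ind^G(\one_{G_T}),\, \ind^G(\one_{G_\omega})\rangle_G
= \langle \res_{G_T}\ind^G(\one_{G_\omega}),\, \one_{G_T}\rangle_{G_T}.
\]
The restricted character $\res_{G_T}\ind^G(\one_{G_\omega})$ is the permutation character of $G_T$ on $\Omega$, so by Burnside's orbit-counting lemma the right-hand inner product equals the number of orbits of $G_T$ on $\Omega$.

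Since $G_T$ preserves the partition $\Omega = T \sqcup (\Omega\setminus T)$, and both parts are nonempty (the case $|\Omega|=t$ is trivial, as the action on $\binom{\Omega}{t}$ is then trivial and the statement is vacuous), $G_T$ has at least two orbits on $\Omega$. Therefore $\langle \ind^G(\one_{G_T}),\, \one+\psi\rangle_G \geq 2$, and combining with $\langle \ind^G(\one_{G_T}),\one\rangle_G = 1$ gives $\langle \ind^G(\one_{G_T}),\psi\rangle_G \geq 1$. Thus both irreducible constituents of $\ind^G(\one_{G_\omega})$ appear in $\ind^G(\one_{G_T})$, and Lemma~\ref{lem:CLgroups} finishes the argument.

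There is no genuine obstacle here; the whole content of the proof is the observation that $2$-transitivity makes the permutation character of $\Omega$ have only two constituents, while $t$-transitivity ensures $G_T$ already has at least two orbits on $\Omega$ (namely $T$ and its complement). The subtlest point is merely checking the trivial boundary case $|\Omega|=t$.
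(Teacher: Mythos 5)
Your proof is correct and follows essentially the same route as the paper: reduce via Lemma~\ref{lem:CLgroups} to showing both constituents of $\one+\psi$ appear in $\ind^G(\one_{G_T})$, then bound $\langle \ind^G(\one_{G_T}), \ind^G(\one_{G_\omega})\rangle_G \geq 2$ by Frobenius reciprocity and an orbit count. The only cosmetic difference is that you restrict to $G_T$ and count its orbits on $\Omega$, whereas the paper restricts to $G_\omega$ and counts its orbits on the $t$-sets; these compute the same inner product.
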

\begin{proof}

Since $G$ is $t$-transitive, with $t\geq 2$, the group is 2-transitive, so $\ind^G(\one_{G_\omega})$ consists of exactly two irreducible representations $\one$ and $\phi$. By Lemma~\ref{lem:CLgroups}, it suffices to prove that $\phi$ is a constituent of $\ind^G(\one_{G_T})$, where $G_T$ is the set-wise stabilizer of a $t$-set. 

This can be seen with Frobenius reciprocity,
\[
\langle \ind^G(\one_{G_T}),  \ \ind^G(\one_{G_\omega}) \rangle_G 
= \langle \res_{G_\omega} ( \ind^G(\one_{G_T}) ), \ \one \rangle_{G_\omega} 
\geq 2.
\]
The last equation follows since $G_\omega$ has at least 2 orbits on the $t$-sets.
\end{proof}

\begin{example}\label{ex:Alt5}
Consider the action of $\alt(5)$ on the pairs from $\binom{5}{2}$. This is a transitive action with degree 10. The stabilizer of a point is the group $(\sym(2) \times \sym(3)) \cap \alt(5)$. This is a group that does not have the EKR property since the group $\alt(4)$ is also an intersecting set with size twice that of the stabilizer of a point. From the previous example, $\alt(4)$ is a CL subgroup, but there is another CL subgroup, the group $D_5$. 

The irreducible representations of $\alt(5)$ includes $\phi$ and $\psi$, which are representations with degree $4$ and $5$ respectively. The representation induced by $D_5$ is $\one + \psi$, and the representation induced by $\alt(4)$ is $\one + \phi$. Since the representation induced by  $(\sym(2) \times \sym(3)) \cap \alt(5)$ is $ \one + \phi + \psi$, the representations also show that $D_5$ and $\alt(4)$ are both CL subgroups.

Similarly, the subgroups $ (C_3 \times C_3) \rtimes C_4$ in $\alt(6)$, and $\psl(3,2)$ in $\alt(7)$ are also CL subgroups.
\end{example}

\begin{example}\label{ex:sublines}
Let $q$ be a power of an odd prime.
 Consider the group $G:=\psl(2,q^{2})$. It is well-known (see~\cite{king2005subgroup}) that $G$ has two non-conjugate copies of $\pgl(2,q)$ as maximal subgroups. Let $H\leq G$ be such that $H \cong \pgl(2,q)$. Consider the transitive action of $G$ on $G/H$. Let $K\leq G$ be such that $K \cong \bb{F}_{q^2} \rtimes S$, where $S$ is the group of non-zero squares in $\bb{F}_{q^2}$. It is well-known that the action of $G$ on $G/K$ is $2$-transitive. Let $\psi \in Irr(G)$ be such that $1+\psi= \ind^G(\one_{K})$. Now $\psi$ is the unique character of degree $q^2$ of $\psl(2,q^2)$. It is not too difficult to see from the character table that $\psi(H)=|H|$. Therefore, by Lemma~\ref{lem:CLgroups}, $K$ is a CL subgroup with respect to the action of $G$ on $G/H$. As $|K|/|H|=\frac{q}{2}$ is not an integer, this is an CL set with fractional parameter and thus not a canonical one.
 
\end{example}

We now consider some permutation actions of $\pgl(2,2^k)$ which are not $2$-transitive. Through these, we will see that CL sets may not behave as well when we move away from $2$-transitive actions.

\subsection{Some non-canonical CL sets in ${\pgl(2,2^k)}$.}\label{sec:examplePGL}

In the next two examples $G := \pgl(2,q)$ where $q = 2^k$ with various different actions. As $q$ is even, $\psl(2,q) \cong \pgl(2,q)$. The subgroup structure of this group and its character table are well-known, see \cite{P-S} for details, we record it in Table~\ref{ch1}.
The subgroup lattice of $G$ is well-known and can be found in many classical group theory books such as~\cite{huppert2013endliche}. We refer the reader to~\cite{king2005subgroup} for a modern exposition. 

\begin{table}
\caption{Character table of $\pgl(2,q)$, for $q$ even.}
\centering
\begin{tabular}{c |c| c c c c }
\hline
& class type & $1$ & $u$ & $d_{x}$ & $v_{r}$\\
& Number & $1$ & $1$ & $\frac{q}{2}-1$ & $\frac{q}{2}$\\
& class size & $1$ & $q^{2}-1$ & $q(q+1)$ & $q(q-1)$\\
\hline
Character type & number &  &  &  & \\
\hline
$\one$ & $1$ & $1$ & $1$ & $1$ & $1$\\
$\psi_{1}$ & $1$ & $q$ & $0$ & $1$ & $-1$\\
$\eta_{\beta}$ & $\frac{q}{2}$ & $q-1$ & $-1$ & $0$ & $-\beta(r\bb{F}_{q}^{*})-\beta(r^{-1}\bb{F}_{q}^{*})$\\
$\nu_{\gamma}$ & $\frac{q}{2}-1$ & $q+1$ & $1$ & $(\gamma(x)+\gamma(x^{-1}))$ & $0$ \\ \hline
\end{tabular}
\label{ch1}
\end{table}

\begin{example}\label{fractional}
Let $q=2^{k}$ and consider the action $G$ on $\binom{\PG(1,q)}{2}$ (the unordered pairs of points). 
This action is equivalent to the action of $G$ on $G/H_{q-1}$ where $H_{q-1}$ is the stabilizer of the set $\{[1, 0], \ [0, 1]\}$ ($H$ is isomorphic to a dihedral group of order $2(q-1)$). The decomposition of $\ind^G(\one_{H_{q-1}})$ is 
\[
\ind^G(\one_{H_{q-1}}) = \one + \psi_1 + \sum_{\gamma} \nu_\gamma. 
\]
By Lemma~\ref{lem:stabilizersubgroups} and using the character table of $G$, we see that $K=G_{[1, 0]}$, a point stabilizer, has $\ind^G(\one_K) = \one + \psi_1$. So, by Lemma~\ref{lem:CLgroups}, $K$ is also CL subgroup under the action on pairs of points.  Since both $H_{q-1}$ and $K$ are maximal subgroups, $K$ is not a disjoint union of cosets of conjugates of $H_{q-1}$.  This is an example of the type of CL subgroup in Lemma~\ref{lem:stabilizersubgroups}.

Again by Lemma~\ref{lem:stabilizersubgroups} and the character table of $G$, we can see that $F\cong \bb{F}_{q}$ (the Sylow-2-subgroup of $\pgl(2,q)$) has
\[
\ind^G(\one_F) = \one + \psi_1 + 2 \sum_{\gamma} \nu_\gamma,
\]
so it is also a CL subgroup under this action. The size of $F$ is $q$, so this is an example of a fractional CL set, as $F \leq K$ it also shows that $K$ is not a minimal CL set. 
It is straight-forward to see that $F$ is a minimal CL set, just consider any proper subset $A$ of $F$. Any of the characters $\eta_{\beta}$, has $\sum \eta_{\beta}(a)\neq 0$, so $A$ is not a CL set. We note that $F$ is a CL set of parameter less than $1$. 
\end{example}

\begin{example}
In this example, consider the action of $G=\pgl(2,q)$ on $G/F$ where $F\cong \bb{F}_{q}$.
Again, take $H_{q-1}$ to be the subgroup isomorphic to a dihedral group of order $2(q-1)$.
The decompositions given in the previous example and Lemma~\ref{lem:stabilizersubgroups} imply that $H_{q-1}$ is a CL subgroup under this action. Note that $H_{q-1}$ is not an independent set in the derangement graph corresponding to this action.
\end{example}

\begin{example}
 Let $d$ be a proper divisor of $q+1$ and let $H_{d}$ be a subgroup isomorphic to the dihedral group of size $2d$. Consider the action of $G=\pgl(2,q)$ on $G/H_d$. Using the character table of $G$ and Lemma~\ref{lem:stabilizersubgroups}, we can show that any Sylow-2-subgroup of $G$ is a CL subgroup under this action. Such a CL subgroup is non-canonical as $q$ is not divisible by $2d$. 
\end{example}

\begin{remark}
As the subgroup lattice of $\pgl(2,2^k)$ is known, it would not be too difficult to find all the CL subgroups corresponding to all permutation actions of $\pgl(2,2^k)$.
\end{remark}

\section{CL sets from the union of non-disjoint canonical CL sets}\label{sec:CLfromnondisj}

It is clear that a union of disjoint canonical CL sets forms a CL set, but in this section we will see that it is also possible to form CL sets from canonical CL sets that have non-empty intersection.

\begin{lemma}\label{lem:CLunions}
Let $G$ be a group acting on a set $\Omega$. Assume there exists a set of pairs 
\[
J = \{ (a_1,b_1),(a_2,b_2), \dots, (a_\ell, b_\ell) \},
\] where $a_i, b_i \in \Omega$ for $i \in 1,\dots, \ell$,
such that for every element $g \in G$, there are either exactly $c$ pairs in $J$ with $g(a_i) = b_i$ or there are no such pairs. Let $S \subseteq G$ be the set of all elements for which there are exactly $c$ pairs in $J$ with $g(a_i) = b_i$. Then $S$ is a CL set. 
\end{lemma}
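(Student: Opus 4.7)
The plan is to observe that the sum $\sum_{i=1}^{\ell} \ch_{a_i,b_i}$ already does all the work, once we evaluate it entrywise.

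First I would fix any $g \in G$ and compute the $g$-entry of the vector $\sum_{i=1}^{\ell} \ch_{a_i,b_i}$. By the very definition of $\ch_{a,b}$, the $g$-entry of $\ch_{a_i,b_i}$ is $1$ if $g(a_i) = b_i$ and $0$ otherwise. Summing over $i$, the $g$-entry of $\sum_{i=1}^{\ell} \ch_{a_i,b_i}$ is exactly the number of pairs in $J$ that $g$ ``realises'', i.e.\ the number of $i$ with $g(a_i) = b_i$.

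Next I invoke the dichotomy hypothesis: this count is either $c$ (precisely when $g \in S$) or $0$ (precisely when $g \notin S$). Consequently
\[
\sum_{i=1}^{\ell} \ch_{a_i,b_i} \;=\; c \, \ch_S.
\]
Since $c \neq 0$ (otherwise $S$ would be empty, in which case $\ch_S = 0$ is trivially a CL set), we may divide:
\[
\ch_S \;=\; \frac{1}{c} \sum_{i=1}^{\ell} \ch_{a_i,b_i}.
\]
This exhibits $\ch_S$ as a rational linear combination of vectors of the form $\ch_{a,b}$, so $S$ is a $(G,\Omega)$-CL set by definition.

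There is no real obstacle here; the only subtlety is noticing that the hypothesis has been cooked up precisely so that the natural sum of canonical characteristic vectors is a scalar multiple of $\ch_S$. (If $S = \emptyset$, the statement is vacuous and we take the empty linear combination.)
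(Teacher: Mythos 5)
Your proposal is correct and is essentially the paper's own proof: the paper simply writes $\ch_S = \frac{1}{c}\sum_{i=1}^{\ell}\ch_{a_i,b_i}$ and concludes, while you spell out the entrywise verification and the harmless $c=0$ edge case. No differences of substance.
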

\begin{proof}
The characteristic vector of $S$ is 
\[
\ch_{S} =  \frac{1}{c}  \sum_{i=1}^\ell \ch_{a_i, b_i}
\]
so $S$ is a CL set.
\end{proof}

The CL sets in the above lemma are unions of stars, but these stars need not be disjoint. The CL set is canonical if and only the the stars are disjoint. Many such examples can be found from imprimitive groups. If $G$ is an imprimitive group acting on $\Omega$ with blocks $B_1, B_2, \dots, B_\ell$, then the set of all elements in $G$ that map every element in $B_i$ to $B_j$ is the union of all the stars $G_{a \to b}$ with $a \in B_i$ and $b \in B_j$. 
In this case, using the notation in Lemma~\ref{lem:CLunions}, $J=\{ (b_i, b_j) \ : \ b_i \in B_i, b_j \in B_j \}$, and for each $g \in G$ there are either $|B_i|$ pairs in $J$ with $g(b_i) = b_j$, or none. But these examples are canonical CL sets, since for a fixed element $b \in B_i$, this set is the union of all the disjoint stars $\cup_{b_j \in B_j }G_{b \to b_j}$.

In this section we will consider examples of CL sets of the type in Lemma~\ref{lem:CLunions}, that are not unions of disjoint stars.

\begin{example}
The CL subgroup in Example~\ref{ex:symmetricGroup} is also an example of a CL set of the type described in Lemma~\ref{lem:CLunions}. The action of $\sym(n-1)$ on the cosets $\sym(n) / (\sym(t) \times \sym(n-t) )$ is not transitive, it has two orbits, the $t$-sets with $n$, call these $\mathcal{A}$, and the sets without $n$, call these $\mathcal{B}$. No element of $\sym(n-1)$ maps an element in $\mathcal{A}$ to an element in $\mathcal{B}$. For every element $\sigma$ that is in $\sym(n)$, but not in $\sym(n-1)$, if $\sigma^{-1}(n) \not \in A \in \mathcal{A}$, then $n\not \in \sigma(A)$, so $\sigma$ maps $A$ to a set in $\mathcal{B}$. Since there are $\binom{n-2}{t-1}$ sets in $\mathcal{A}$ that do not contain $\sigma^{-1}(n)$, each permutation in $\sym(n)$, that is not in $ \sym(n-1)$, maps $\binom{n-2}{t-1}$ elements from $\mathcal{A}$ to $\mathcal{B}$.
\end{example}

\begin{example}
In Example~\ref{ex:Alt5}, under the action of $\alt(5)$ on $\binom{[5]}{2}$, $D_{10}$ is a CL subgroup. This is also an example of a CL sugroup of the type in Lemma~\ref{lem:CLunions}. To see this, let $\mathcal{A}$ be the set of edges in a 5-cycle, and $\mathcal{B}$ the set of edges in its complement. The sets $\mathcal{A}$ and $\mathcal{B}$ are the orbits of $D_5$ on $\binom{[5]}{2}$.
Then the set $D_5$ is a CL set, since any permutation from $D_5$ maps no set in $\mathcal{A}$ to a set in $\mathcal{B}$, while every other permutation in $\alt(5)$, maps exactly $3$ of the sets in $\mathcal{A}$ to a set in $\mathcal{B}$. The group $D_5$ does not contain a star under this action.
\end{example}

\begin{example}
 Let $G$, $H$, $K$ be as in Example~\ref{ex:sublines}. The number of orbits for the action of $K$ on $G/H$ is the same as the number of $(K,H)$-double cosets in $G$, which in turn is the same as the number of orbits for the action of $H$ on  $K \backslash G$ (space of right cosets of $K$). Using $\ind^G(\one_{K})=1+ \psi$ and the Orbit-Counting formula, we see that the action of $H$ on $K\backslash G$ has exactly 2 orbits, and thus the same is true for the action of $K$ on $G/H$. Let $A$ and $B$ be the $K$-orbits on $G/H$. As $K$ is a maximal subgroup, we have $K=\{g\ :\ g(A)=A\}=\{g\ :\ g(B)=B\}$. Therefore, given $g \in K^c$, there exists $(a,b)\in A\times B$ such that $g(a)=b$. Therefore, we have $K^c= \bigcup\limits_{(a,b)\in A\times B}G_{a \shortto b}$. As $K$ is a CL subgroup, $K^c$ is a CL set which is a union of non-disjoint stars. As $\frac{|K^c|}{|H|} =\frac{|G|}{|H|}-\frac{q}{2}$ is not an integer, $K^c$ is not a canonical CL set.
\end{example}

\subsection{CL sets in the 2-transitive action $PSL(2,q^2)$}\label{subsec:psl(2,q^2)}

In this subsection we construct a non-canonical CL set for $G=\psl(2,q^2)$ where $q$ is a power of an odd prime. It is well-known that $G$ acts 2-transitively on the set of the $q^2+1$ projective points, which we denote by $\PG(1,q^2)$. Since $q$ is odd, $q^2 \equiv 1 \pmod{4}$ so it has 2 orbits on the 3-sets of distinct points. Further, $\PG(1,q)$ is a subline  of $\PG(1,q^2)$ and the set-wise stabilizer of $\PG(1,q)$ is isomorphic to $\pgl(2,q)$. Any set of three points from $\PG(1,q)$ are contained in a single orbit under the action of $G$, since this action has two orbits, there is a second subline corresponding to 3-sets in the other orbit and no element of $G$ maps the first subline to the second. We call these two sublines $\Omega_1$ and $\Omega_2$, both are $(q+1)$-sets of points from $\PG(1,q^2)$. The set-stabilizers of $\Omega_1$ and $\Omega_2$ are both isomorphic to $\pgl(2,q)$, but these two subgroups are not conjugate in $\psl(2,q^2)$. 
The next result shows that for any element from $g \in G$, the size
$| g (\Omega_1) \cap \Omega_2 |$ can only take two different values.

\begin{lemma}\label{int}
Let $G = PSL(2,q^2)$ where $q$ is an odd prime power. 
Let $\Omega_1$ and $\Omega_2$ be two sublines, $\PG(1,q)$, from different orbits (as above).
For any $g \in G$ 
\[
| g ( \Omega_1)  \cap \Omega_2 | \in \{0,2\}.
\]
\end{lemma}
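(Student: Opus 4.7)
The plan is to combine two elementary facts: any three distinct points of $\PG(1,q^2)$ lie on a unique subline, and any two sublines that contain $\infty$ and are ``parallel'' as affine $\bb{F}_q$-lines are related by a translation, hence lie in the same $G$-orbit.

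First, I would observe that $g(\Omega_1)$ and $\Omega_2$ are two sublines. Because $\pgl(2,q^2)$ is sharply $3$-transitive, three distinct points of $\PG(1,q^2)$ lie on a unique subline, so any two distinct sublines meet in at most $2$ points. Since $g(\Omega_1)$ lies in the $G$-orbit of $\Omega_1$ and $\Omega_2$ lies in a different orbit, $g(\Omega_1)\neq \Omega_2$, giving the upper bound $|g(\Omega_1)\cap \Omega_2|\leq 2$. It therefore remains to rule out intersection of size exactly $1$.

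Suppose $g(\Omega_1)\cap \Omega_2\neq \emptyset$ and pick $p$ in this intersection. By $2$-transitivity of $G$, choose $h\in G$ with $h(p)=\infty$, and replace $g(\Omega_1)$ and $\Omega_2$ by their images under $h$; this preserves both orbits and both cardinalities. Now both sublines contain $\infty$, so they have the form $\{\infty\}\cup(a_i+c_i\bb{F}_q)$ with $c_i\in \bb{F}_{q^2}^*$. Because the translation $z\mapsto z-a_1$ has determinant $1$ and hence lies in $\psl(2,q^2)$, we may further normalize to
\[
L_1 \;=\; \{\infty\}\cup c_1\bb{F}_q, \qquad L_2 \;=\; \{\infty\}\cup(a_2+c_2\bb{F}_q),
\]
with $L_1,L_2$ still in orbits $1,2$ respectively.

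The key step is the claim $c_2/c_1\notin \bb{F}_q$. Indeed, if $c_2\bb{F}_q=c_1\bb{F}_q$ then the translation $z\mapsto z+a_2$ lies in $G$ and maps $L_1$ onto $L_2$, contradicting that $L_1,L_2$ lie in different $G$-orbits. Therefore $\{c_1,c_2\}$ is $\bb{F}_q$-linearly independent in the $2$-dimensional $\bb{F}_q$-space $\bb{F}_{q^2}$, so $c_1\bb{F}_q+c_2\bb{F}_q=\bb{F}_{q^2}$. Writing $a_2=sc_1+tc_2$ with $s,t\in \bb{F}_q$ produces a second common point $sc_1=a_2-tc_2\in c_1\bb{F}_q\cap(a_2+c_2\bb{F}_q)$, yielding $|L_1\cap L_2|\geq 2$ and, together with the upper bound, $|L_1\cap L_2|=2$.

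The one delicate point—and the main thing to track—is that every normalization step is actually realized inside $\psl(2,q^2)$, not merely inside $\pgl(2,q^2)$. This is unproblematic here because translations have determinant $1$ and the initial move of $p$ to $\infty$ uses only $2$-transitivity, which $\psl(2,q^2)$ already has. No appeal to the structure of the two orbits beyond their distinctness is required.
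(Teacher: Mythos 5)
Your proof is correct. It shares the paper's skeleton for the upper bound (three distinct points lie on a unique subline, and $g(\Omega_1)\neq\Omega_2$ since the two sublines lie in different $G$-orbits, so the intersection has size at most $2$), and both arguments then normalize a common point to a distinguished position before producing a second common point. But the mechanism you use for that last step is genuinely different. The paper reduces to an explicit matrix $g=\begin{pmatrix}1 & u\\ 0 & s\end{pmatrix}$ in the stabilizer of the common point and splits into cases according to whether $u\in\bb{F}_q$ or $u\in\bb{F}_{q^2}\setminus\bb{F}_q$, exhibiting a concrete second intersection point in each case. You instead move the common point to $\infty$, observe that a subline through $\infty$ is $\{\infty\}\cup(a+c\bb{F}_q)$, i.e.\ an affine $\bb{F}_q$-line in the plane $\bb{F}_{q^2}$, and argue that the two lines cannot be parallel --- parallel lines differ by a translation, translations have determinant $1$ and hence lie in $\psl(2,q^2)$, which would put the two sublines in the same orbit. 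Two non-parallel affine lines in an $\bb{F}_q$-plane meet in a point, giving the second intersection. Your route is shorter, avoids the case analysis, and makes completely transparent where the different-orbits hypothesis enters the lower bound (it is exactly what rules out parallelism); the paper's computation is heavier but stays in the explicit projective coordinates used throughout that section and displays the second point of $g(\Omega_1)\cap\Omega_2$ concretely. The one step you state without proof --- that every subline through $\infty$ has the form $\{\infty\}\cup(a+c\bb{F}_q)$ --- is standard and follows by composing with an element of $\pgl(2,q)$ so that the defining map fixes $\infty$ and is therefore affine.
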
    
\begin{proof}
Since three distinct points determine a line, and the triples from $\Omega_1$ and $\Omega_2$ are in distinct orbits, it must be that $| g(\Omega_1)  \cap \Omega_2 | \in\{0,1,2\}$ for every $g \in G$. So we only need to show that the intersection cannot be equal to one. Since $\Omega_1 = \PG(1,q)$, we may assume that it contains both $[1,0]$ and $[0,1]$. Further, $\Omega_2$ can also be assumed to contain $[1,0]$ and $[0,1]$.

Assume that $| g (\Omega_1)  \cap \Omega_2 |  \geq 1$, we will show that this intersection actually has two elements. Without loss of generality we can assume the point $[1,0]$ is in this intersection. So there is an $\omega \in \Omega_1$ with $g(\omega) = [1,0]$. Further, there is an $h \in \pgl(2,q)$ with $h(\omega) = [1,0]$. So $gh$ maps $[0,1]$ to $[0,1]$, and, if it is the case that $| gh (\Omega_1)  \cap \Omega_2 |  = 2$, then $| g (\Omega_1)  \cap \Omega_2 |  = 2$, since $h$ fixes $\Omega_1$. Thus we may assume that 
$g \in G_{0}=\begin{pmatrix}
1 & u \\
0 & s
\end{pmatrix}$ for some $u \in \bb{F}_{q^2}$ and some $s$ a square element in  $\mathbb{F}_{q^2}\setminus \{0\}$. 
If $u \in \bb{F}_{q}$, then $[1, -u^{-1}] \in \Omega_1$ and $g([1, -u^{-1}])= [0,1]$ and therefore $[0,1] \in g(\Omega_1)  \cap \Omega_2$. 

Alternately, consider the case where $u \in \bb{F}_{q^2} \setminus \bb{F}_{q}$, in this case $u$ can be expressed as the sum of an element in $\bb{F}_{q}$ and an element from $\bb{F}_{q^2}  \setminus \bb{F}_{q}$. For any $r \in \bb{F}_{q^2}  \setminus \bb{F}_{q}$, and any $b \in \bb{F}_{q} \setminus \{0\}$, the element $r^{-1}b$ is not in $\bb{F}_{q}$. In particular, there is an $r \in \bb{F}_{q^2}  \setminus \bb{F}_{q}$ and a $b \in \bb{F}_{q} \setminus \{0\}$ so that $[1, rb^{-1}] \in \Omega_2$. Further, there exists $a \in \bb{F}_{q}$ such that $u=a+sr^{-1}b$. 

If $a=0$, then
\[
  g([0,1]) =  [u, s] = [1, u^{-1}s ] = [1, rs^{-1}b^{-1} s] = [1, rb^{-1}],
 \]
 so $[0,1]$ is the second element in  $g (\Omega_1) \cap \Omega_2$.
 
 If $a\neq 0$, then 
\[
    g([1,-a^{-1}]) = [1-a^{-1}u, -sa^{-1}] = [ a -u, -s] = [-bsr^{-1}, -s] =  [br^{-1}, 1] = [1, rb^{-1}],
\]
and $[1, rb^{-1}]$ is the second element in  $g(\Omega_1)  \cap \Omega_2$.
\end{proof}

This result, with Lemma~\ref{lem:CLunions}, shows that there are CL sets in $\psl(2, q^2)$.

\begin{thm}\label{sublineCLset}
Let $G = \psl(2,q^2)$ where $q$ is a prime power.
Let $A$ and $B$ be any two sublines that lie in different $G$ orbits. Then 
$L :=\{ g \in G \ : \  | g(A) \cap B |=2\}$ is a CL set for $G$ with its action on projective points.
\end{thm}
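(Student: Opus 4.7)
The plan is to apply Lemma~\ref{lem:CLunions} directly, taking
\[
J := \{(a,b) \ :\ a \in A,\ b \in B\}
\]
as the indexing set of point-pairs and $c = 2$ as the constant.

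For any $g \in G$, I observe that the number of pairs $(a,b) \in J$ with $g(a) = b$ equals $|g(A) \cap B|$, since $g$ acts as a bijection of $\PG(1,q^2)$: each point in $g(A) \cap B$ is hit by exactly one point of $A$. By Lemma~\ref{int}, this cardinality lies in $\{0,2\}$ for every $g \in G$. Hence the hypothesis of Lemma~\ref{lem:CLunions} is satisfied with $c = 2$, and the distinguished subset produced by that lemma coincides with $L$. Lemma~\ref{lem:CLunions} then yields the explicit expression
\[
\ch_L \;=\; \tfrac{1}{2}\sum_{a \in A,\ b \in B} \ch_{a,b},
\]
which exhibits $\ch_L$ as a rational linear combination of the indicator vectors of canonical stars. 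This is exactly the definition of a $(G,\PG(1,q^2))$-CL set.

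I expect no serious obstacle at this stage, because the substantive work has already been absorbed into Lemma~\ref{int}, whose proof ruled out the intermediate value $|g(A) \cap B| = 1$. One small bookkeeping point worth flagging is that Lemma~\ref{int} should apply to an \emph{arbitrary} pair of sublines from different $G$-orbits, not only to a normalized pair containing the two points $[1,0]$ and $[0,1]$; this reduction is harmless because $G$ is 2-transitive on points of $\PG(1,q^2)$ and is transitive on each $G$-orbit of sublines, so an arbitrary pair $(A,B)$ with $A,B$ in distinct orbits can be simultaneously conjugated into the normalized form used in the proof of Lemma~\ref{int}. Once this is noted, the theorem follows in a single line from Lemmas~\ref{int} and~\ref{lem:CLunions}.
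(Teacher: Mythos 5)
Your proposal is correct and follows essentially the same route as the paper: both apply Lemma~\ref{lem:CLunions} with $J = A \times B$ and $c=2$, invoke Lemma~\ref{int} to rule out intersection size $1$, and reduce the case of arbitrary sublines $A$, $B$ to the normalized pair $\Omega_1$, $\Omega_2$ by composing with elements of the two subline stabilizers. Your explicit remark that $g$ being a bijection makes the pair count equal to $|g(A)\cap B|$, and your flagging of the reduction to the normalized pair, match the paper's argument step for step.
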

\begin{proof}
Define $J=\{ (x,y) \in \Omega_1 \times \Omega_2 \}$, then by Lemmas~\ref{lem:CLunions} and~\ref{int} the set 
\[
L :=\{ g \in G \ : \  | g( \Omega_1 ) \cap \Omega_2  |=2\}
\]
is a CL set for $G$.

For any two sublines $A$ and $B$ from different orbits, there are permutations $h_1$ and $h_2$ in $G$ such that $h_1 (\Omega_1)=A$ and $h_2(\Omega_2) =B$ ($h_1$ is from one copy of $\pgl(2,q)$, and $h_2$ is an element from another, non-conjugate, copy of $\pgl(2,q)$). Thus
\[
|g(A) \cap B| = |  h_1g(\Omega_1)  \cap  h_2(\Omega_2)   | =  |  h_1gh_2^{-1} (\Omega_1)  \cap  \Omega_2 |.
\]
By Lemma~\ref{int} this intersection is either 0 or 2 for every $g$, and the result follows from Lemma~\ref{lem:CLunions}. 
\end{proof}

\begin{lemma}\label{size} 
The set $L$ from Theorem~\ref{sublineCLset} is a non-canonical CL set with $ | L |= \dfrac{q^2 (q+1)^2(q^2-1)}{4} $.
\end{lemma}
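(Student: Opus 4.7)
My plan has two parts. First I compute $|L|$ directly from Lemma~\ref{lem:CLunions}, which, applied with $J = A \times B$ and $c = 2$ (the latter forced by Lemma~\ref{int}), gives $2\,\ch_L = \sum_{(a,b) \in A \times B} \ch_{G_{a \shortto b}}$. Summing coordinates and using $|G_{a \shortto b}| = |G|/|\Omega| = q^2(q^2-1)/2$ from $2$-transitivity, I obtain $2|L| = (q+1)^2 \cdot q^2(q^2-1)/2$, which rearranges to the claimed value.

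For non-canonicity, I will argue by contradiction. If $L$ is canonical, then the proposition on canonical CL sets in $2$-transitive groups (immediately after the definition of canonical) forces $L$ to be either row-canonical, $L = \{g : g(a_0) \in T\}$, or column-canonical, $L = \{g : g^{-1}(b_0) \in T\}$, for some point and some $T \subseteq \Omega$. I treat the row case first. This form is manifestly right-invariant under the point stabilizer $G_{a_0}$. I also claim $L$ is right-invariant under $H_A := \{g \in G : g(A) = A\} \cong \pgl(2,q)$: for $g \in L$ and $h \in H_A$, $|(gh)(A) \cap B| = |g(h(A)) \cap B| = |g(A) \cap B| = 2$, so $gh \in L$. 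Hence $L$ is right-invariant under $\langle G_{a_0}, H_A \rangle$.

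Since $G$ acts $2$-transitively, hence primitively, on $\Omega$, the stabilizer $G_{a_0}$ is a maximal subgroup of $G$. The orbits of $H_A$ on $\Omega$ are $A$ (of size $q+1$) and its complement (of size $q^2 - q$), both larger than one, so $H_A$ fixes no point and $H_A \not\subseteq G_{a_0}$. Maximality therefore forces $\langle G_{a_0}, H_A \rangle = G$, so $L$ is right-invariant under all of $G$, which means $L \in \{\emptyset, G\}$. This contradicts the already-computed value of $|L|$, since $(q+1)^2/2$ is neither $0$ nor $q^2+1$ for odd $q \geq 3$. The column case is entirely analogous, with left-invariance of $L$ under $H_B$ and $G_{b_0}$ in place of right-invariance. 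The main conceptual step is spotting the bi-invariance of $L$ under the setwise stabilizers $H_A$ and $H_B$, which follows directly from the definition of $L$ via $|g(A) \cap B|$; once that is in hand, primitivity of the action closes both cases.
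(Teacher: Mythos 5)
Your argument is correct and, for the non-canonicity half, genuinely different from the paper's. For the size, both proofs rest on the fact that each $g\in L$ covers exactly two pairs of $A\times B$; you sum the identity $2\ch_L=\sum_{(a,b)\in A\times B}\ch_{G_{a\shortto b}}$ over coordinates, whereas the paper counts $L$ directly by fixing the two-point intersection and applying orbit--stabilizer, so your version is shorter and less error-prone. For non-canonicity, the paper shows that $L$ actually contains every star $G_{x\shortto y}$ with $(x,y)\in\Omega_1\times\Omega_2$, that at most $q+1$ of these can be chosen pairwise disjoint, and that after deleting such a family the remainder contains no star, so $L$ cannot be partitioned into the $(q+1)^2/2$ disjoint stars its size would require. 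You instead invoke the paper's structural proposition that a canonical CL set in a $2$-transitive group is a union of stars out of a common point $a_0$ (or into a common $b_0$), observe that such a set is right- (resp.\ left-) invariant under $G_{a_0}$ (resp.\ $G_{b_0}$) while $L$ is right- (resp.\ left-) invariant under the setwise stabilizer $H_A\cong\pgl(2,q)$ (resp.\ $H_B$), and use maximality of point stabilizers in a primitive group to force $L\in\{\emptyset,G\}$, a contradiction; this is slicker and avoids all intersection counting. Its one soft spot is the unproved assertion that the orbits of $H_A$ on $\Omega$ are exactly $A$ and its complement: you only need that $H_A$ fixes no point, and while transitivity of $\pgl(2,q)$ on $\PG(1,q^2)\setminus\PG(1,q)$ is standard, it is cleaner to note that $|G_{a_0}|/|H_A|=q/2$ is not an integer for odd $q$, so Lagrange already gives $H_A\not\leq G_{a_0}$. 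The trade-off is that the paper's computation yields the extra information that $\tilde{L}=L\setminus\bigcup_{k\in\Omega_2}G_{0\shortto k}$ is a \emph{proper} non-canonical CL set containing no star at all, which your invariance argument does not produce.
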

\begin{proof}
It suffices to prove this for $\Omega_1$ and $\Omega_2$, rather than for all $A$ and $B$.
First set $\alpha=\{g \in G \ : \ g ( \Omega_1 ) \cap \Omega_2   =\{ [1,0] ,\ [0,1] \} \}$. 
By the $2$-transitivity of $\psl(2,q^2)$, and the fact that the number of 2-subsets of $\Omega_2$ is $\binom{q+1}{2}$, we have $|L|= \binom{q+1}{2} \times \alpha$.

To find $\alpha$, note that there are $(q+1)q$ possible pairs of points $k_0, k_{\infty}\in \Omega_1$ that can be mapped to $\{0, \infty\}$. Then an element $g\in G$ with $g(k_0) = 0$ and $g(k_\infty) = \infty$ has the form $\begin{pmatrix}
    1 & -k_\infty^{-1}\\ \ell & -\ell k_0^{-1}
\end{pmatrix}$. Since there are $\frac{q^2-1}{2}$ possibilities for the element $\ell \in \mathbb{F}_{q^2}\setminus \{0\}$, we have that $\alpha = \binom{q+1}{2}q(q+1)$ and
\[
|L| = \binom{q+1}{2} \times \alpha   = \frac{q^2(q+1)^2(q^2-1)}{4}.
\]

Finally, we need to show that $L$ is not a canonical CL set. If it were canonical, then $L$ must be a disjoint union of $\frac{(q+1)^2}{2}$ sets of the form $G_{x\shortto y}$. 
By $2$-transitivity of $G$, we have $G_{x \shortto y}$ and $G_{u \shortto w}$ are disjoint if and only if either $x=u$ or $y=w$. 
Also, if $x\neq u$ and $y \neq w$, then $|G_{x\shortto y} \cap G_{u\shortto w}|= \frac{q^2-1}{2}$. 

Since each element in $L$ maps exactly two elements in $\Omega_1$ to $\Omega_2$ we can easily write its characteristic vector as a linear combination of stars: $\ch_{L}=\frac{1}{2} \sum\limits_{(x,y)\in \Omega_1 \times \Omega_2 } \ch_{G_{x\shortto y}}$.
Further, 
\[
\ch_{G_{u \to w}}^{T} \ch_{L} =
\begin{cases}
    |G_{u \to w}|=\frac{q^{2}(q^2-1)}{2}, & (v,w) \in \Omega_1 \times \Omega_2; \\
    0 & \textrm{otherwise.}
\end{cases}
\]
Thus $L$ contains at most $q+1$ disjoint stars. 

Since the set $\bigcup\limits_{k \in\Omega_2 }G_{0\shortto k}$ is contained in $L$, we can consider $\tilde{L}=L \setminus \bigcup\limits_{y \in\Omega_2 }G_{0\shortto k}$. For any $(x,y) \in \Omega_1 \times \Omega_2 $, with $x\neq 0$, we compute that 
\begin{align*}
|G_{x,y} \cap \tilde{L}|&=\frac{1}{2}|G_{x,y}|+\frac{1}{2}\sum\limits_{\stackrel{(u,w) \in \Omega_1 \times \Omega_2}{u\neq x, w\neq y}}|G_{u,w} \cap G_{x,y}|- \sum\limits_{z \in \Omega_2\setminus \{y\}}|G_{x,y}\cap G_{0,z}| \\
&= \frac{q^2(q^2-1)}{4} + \frac{1}{2} \frac{q^2(q^2-1)}{2} - \frac{q(q^2-1)}{2} \\
&= (q^2-q) \frac{q^2-1}{2} \\
&< \frac{q^2(q^2-1)}{2}.
\end{align*}

So no other stars can be entirely contained in $\tilde{L}$. Hence, $\Tilde{L}$ is a proper non-canonical CL set.
\end{proof}


\section{Future Research}
We end with some directions for further work. In this article, we constructed non-canonical CL sets in some $2$-transitive groups. A natural question would be to characterize all CL sets in specific classes of $2$-transitive groups. We recall from Corollary~\ref{cortussen01} that a CL set of $2$-transitive group of parameter $1$ is a maximum intersecting set. Classification of maximum intersecting sets has been completed for the $2$-transitive actions of $\sym(n)$, $\alt(n)$, $\pgl(n,q)$, and $\psl(2,q)$ (see \cite{CK2003, LPSX2018, MS2011, Spiga2019}). As a starting point, we propose to investigate the classification of CL sets in these groups. We saw that there are no non-canonical CL sets in $\sym(n)$ and $\alt(n)$ (see Theorem~\ref{sym} and Theorem~\ref{thm:NoAlternating}). In Section \ref{sec:CLinPSL}, we constructed a family of non-canonical CL set in $\psl(2,q)$, and another family in $\pgl(2,q^2)$. Computations in \cite{PalmarinThesis} indicate that for small vales of $q$, these are the only non-canonical sets in $\psl(2,q)$. Our constructions of non-canonical CL sets in $\psl(2,q)$ do not ``lift'' to CL sets in $\pgl(2,q)$. Computations for small values of $q$ did not yield any non-canonical CL sets in $\pgl(2,q)$ which leads us to the following question.

\begin{question}
Classify CL sets in $\psl(2,q)$ and $\pgl(2,q)$. In particular, are there non-trivial CL sets in $\pgl(2,q)$?
\end{question}

As $\sym(n)$, $\alt(n)$, and $\psl(2,q)$ satisfy the strict-EKR property (see \cite{CK2003, ku2007intersecting, LPSX2018}), by Corollary~\ref{cortussen01}, any parameter one CL set in these groups must be canonical. However only $\psl(2,q)$ has non-canonical CL sets. It is natural to ask the following question.
\begin{question}
Under what conditions does a $2$-transitive group posses a proper non-canonical CL set of parameter greater than $1$?
\end{question}

From Lemma~\ref{n-clique}, we know that if the derangement graph of a $2$-transitive group has a large clique, the parameter of a CL set in such a group must be an integer. All the examples we found of CL sets in $2$-transitive group have integer parameters. The numerical condition given in Corollary~\ref{cortussen01} shows that the parameter of a CL set of a $2$-transitive group is at least $1$, but it does not rule out the existence of fractional parameter CL sets.
\begin{question}
Can $2$-transitive group posses a CL set of fractional parameter?
\end{question}

In the case of CL sets in $\mrm{PG}(3,q)$, there are many results---see, for example,~\cite{GMModularequality, Metsch2014}---that rule out the existence of CL sets of certain parameters. It would be interesting to find such numerical restraints on parameters of non-canonical CL sets in $2$-transitive groups.

\begin{question}
Find numerical conditions on the parameter of non-canonical CL sets of $2$-transitive groups.
\end{question}

\section*{Acknowledgements}
Jozefien D’haeseleer is supported by the Research Foundation Flanders (FWO) through the grant 1218522N.
Karen Meagher's research is supported by NSERC Discovery Research Grant No.: RGPIN-2018-03952.
Venkata Raghu Tej Pantangi is supported by the PIMS postdoctoral fellowship.

\bibliographystyle{abbrv}
\bibliography{ref.bib}

\end{document}